\documentclass[11pt,english,reqno]{amsart}

\usepackage{layout}
\usepackage{amsmath,amscd,amsthm,amssymb,amsfonts,bm}
\usepackage{newlfont}
\usepackage{mathrsfs}
\usepackage[dvips]{graphicx}
\usepackage{epsf}

\usepackage{color}
\usepackage{hhline}
\usepackage{rotating}
\usepackage{verbatim}
\usepackage{lipsum}

\usepackage[bookmarks=false,colorlinks=true,linkcolor=blue,urlcolor=blue,citecolor=blue,filecolor=blue,pdfstartview=FitH]{hyperref}

\RequirePackage[hyperpageref]{backref}
\renewcommand*{\backref}[1]{}
\renewcommand*{\backrefalt}[4]{
     \ifcase #1 (no cited)
      \or (cited p. #2)
      \else (cited pp. #2)
      \fi}
      
\usepackage{enumitem}
\usepackage{nccmath}
\usepackage[normalem]{ulem}


\usepackage[footnotesize]{subfigure}
\usepackage{caption}
\captionsetup{font=footnotesize}

\newcommand{\E}{\mathbb E}

\newcommand{\bH}{\mathbb H}

\renewcommand{\P}{\mathbb P}\newcommand{\R}{\mathbb R}

\newcommand{\indic}{1\negthickspace\text{I}}\newcommand{\indicat}{{\text{1}}\negthickspace\text{I}}

\newcommand{\cB}{\cal{B}}\newcommand{\cC}{\cal{C}}

\newcommand{\cK}{\cal{K}}\newcommand{\cN}{\cal{N}}
\newcommand{\cP}{\cal{P}}
\newcommand{\cV}{\cal{V}}

\newcommand{\eps}{\varepsilon}\newcommand{\fhi}{\varphi}\newcommand{\la}{\lambda}
\newcommand{\dd}{\mathrm{d}}

\newcommand{\pass}{\vspace{0.3cm}\noindent}\newcommand{\noi}{\noindent}

\DeclareMathOperator{\conv}{conv}

\theoremstyle{plain}
\newtheorem{theo}{Theorem}[section]
\newtheorem{prop}[theo]{Proposition}
\newtheorem{lem}[theo]{Lemma}
\newtheorem{cor}[theo]{Corollary}

\theoremstyle{definition}

\hfuzz5pt

\setlength{\textwidth}{15.5cm}%
\setlength{\textheight}{22cm}%
\setlength{\headheight}{-3cm}%
\setlength{\headsep}{4.5cm}%
\setlength{\topmargin}{0cm}%

\setlength{\footskip}{2cm}%

\setlength{\marginparwidth}{1cm}%
\setlength{\oddsidemargin}{0.25cm}%
\setlength{\evensidemargin}{0.25cm}%

\definecolor{note}{rgb}{1,0,0}


\author[P. Calka, Y. Demichel and N. Enriquez]{Pierre Calka\textsuperscript{1}, Yann Demichel\textsuperscript{2} and Nathana\"el Enriquez\textsuperscript{3}}

\address{\textsuperscript{1} Laboratoire de Math\'ematiques Rapha\"el Salem, UMR CNRS 6085, Universit\'e de Rouen Normandie, avenue de l'Universit\'e, Technop\^ole du Madrillet, 76801 Saint-Etienne-du-Rouvray, France.}
\email{pierre.calka@univ-rouen.fr}

\address{\textsuperscript{2} Laboratoire MODAL'X, UMR CNRS 9023, UPL, Universit\'e Paris Nanterre, 200 avenue de la R\'e\-pu\-bli\-que, 92001 Nanterre, France.}
\email{yann.demichel@parisnanterre.fr}

\address{\textsuperscript{3} Laboratoire de Math\'ematiques d'Orsay, Universit\'e Paris-Saclay, Bâtiment 307, 91405 Orsay, France.}
\email{nathanael.enriquez@universite-paris-saclay.fr}

\address{This work was partially supported by the French ANR
grant ASPAG (ANR-17-CE40-0017), the Institut Universitaire de France  and the French research group GeoSto (CNRS-RT3477).}

\title[Elongated Poisson-Voronoi cells in an empty half-plane]{Elongated Poisson-Voronoi cells in an empty half-plane}

\begin{document}

\begin{abstract}
The Voronoi tessellation of a homogeneous Poisson point process in the lower half-plane of $\R^2$ gives rise to a family of vertical elongated cells in the upper half-plane. The set of edges of these cells is ruled by a Markovian branching mechanism which is asymptotically described by two sequences of iid variables which are respectively Beta and exponentially distributed. This leads to a precise description of the scaling limit of a so-called \textit{typical} cell. The limit object is a random apeirogon that we name \textit{menhir} in reference to the Gallic huge stones. We also deduce from the aforementioned branching mechanism that the number of vertices of a cell of height $\la$ is asymptotically equal to $\frac45\log\la$.
\end{abstract}


\subjclass[2020]{Primary 60D05, 52A22; Secondary  60G55, 60J05, 33B15, 39A50, 52A23}
\keywords{Poisson point process; Poisson-Voronoi tessellation; Markov chain; Markov coupling; Random difference equations}

\maketitle

\fussy

\addtocontents{toc}{\vspace{0.25cm}}%
\section{Introduction}\label{sec:intro}

\noi The Voronoi tessellation generated by a locally finite set of points in $\R^2$ called nuclei is the partition of the plane by convex polygons called Voronoi cells defined as the sets of locations closer to a particular nucleus than to the others. 
Large cells in Voronoi tessellations generated by a homogeneous Poisson point process have attracted a lot of attention for decades. The paper \cite{hrs04} proves and extends D.~G.~Kendall's statement which asserts that large cells from a stationary and isotropic Poisson line tessellation are close to the circular shape. In \cite{hs07}, this fact is proved to occur for Poisson-Voronoi tessellations as well. Thereafter, the work \cite{cs05} investigates the mean defect area and mean number of vertices of the typical Poisson-Voronoi cell conditioned on containing a disk of radius $r$ when $r\longrightarrow\infty$. 

\pass Large Voronoi cells appear in locations where there is only one nucleus inside a large domain. In \cite{CDE21}, we consider the Poisson-Voronoi tessellation generated by the union of a Poisson point process outside a large deterministic set with an isolated point belonging to that set. We then estimate the mean and variance of the area, perimeter and number of vertices of the Poisson-Voronoi cell associated with the isolated nucleus.

\begin{figure}[ht!]
\begin{center}
\includegraphics[trim=3.5cm 8cm 6cm 6cm,clip,scale=.5]{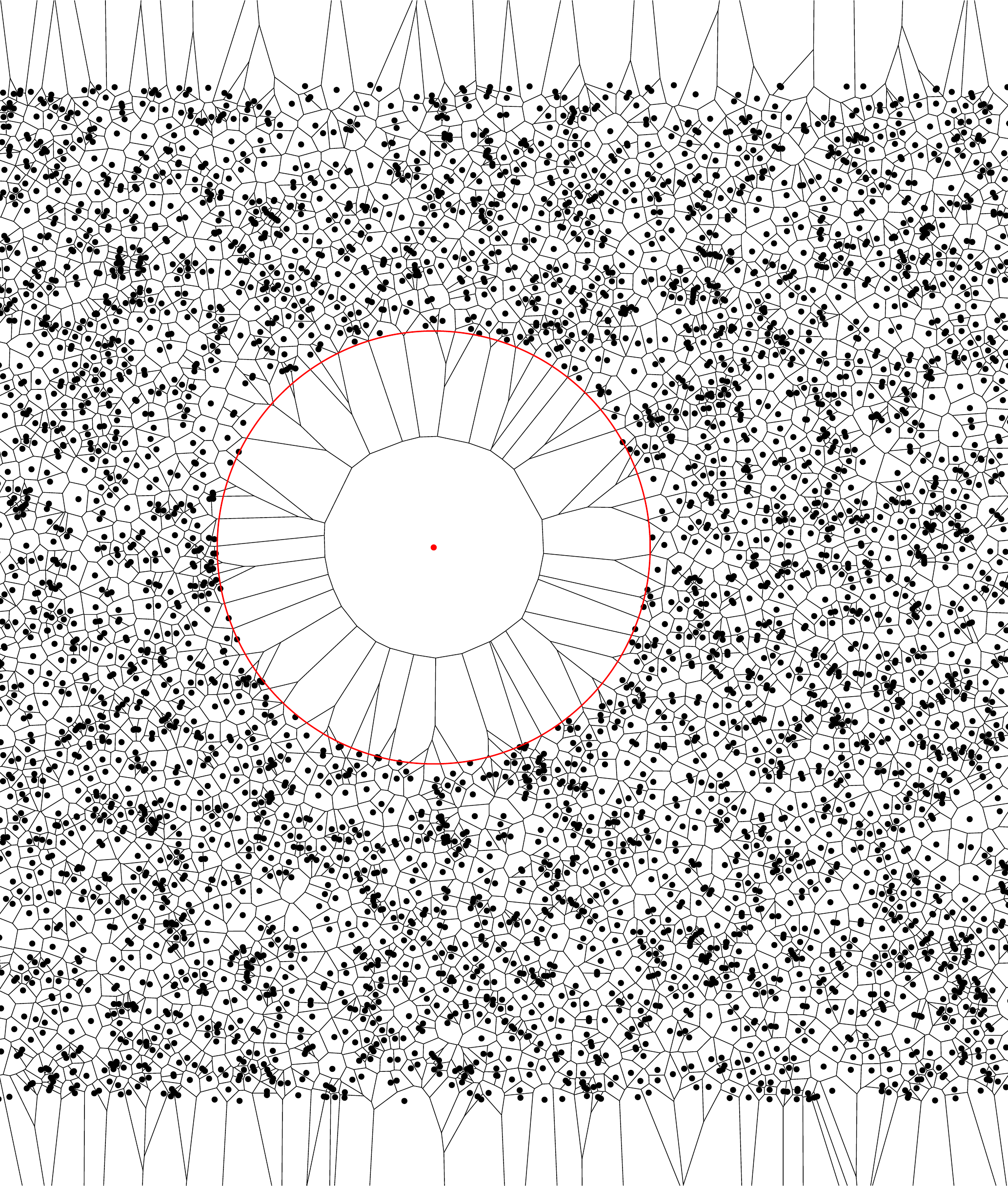}
\hspace*{0.7cm} \includegraphics[trim=4.5cm 1cm 4.5cm 0cm, clip, height=5.35cm,scale=1]{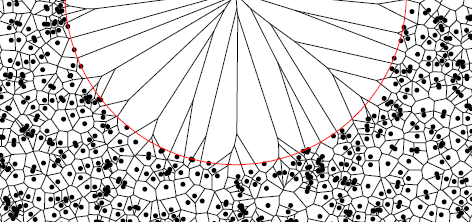}
\end{center}
\caption{Left: The Voronoi cell around an isolated nucleus in a large disk. Right: The elongated cells above a fraction of the boundary of the disk.}
\label{fig:AHL}
\end{figure}

\pass We observe that this large cell is surrounded by a branching collection of elongated cells, see Figure \ref{fig:AHL} which represents the Voronoi cell of an isolated nucleus and its neighbors in an empty large disk. These elongated cells constitute the object of our attention in the present paper. When zooming close to the boundary of the empty set, we observe that the local geometry of the elongated cells is captured by the Voronoi tessellation generated by a Poisson point process limited to the lower half-plane of $\R^2$. In this idealized model, we focus our study on a so-called typical cell whose highest vertex is at height $\la$ and show that its boundary all the way from the highest vertex to the $x$-axis is ruled by a two-dimensional Markov chain. This is slightly reminiscent of \cite{BTZ} where walking along the consecutive nuclei of the Voronoi cells which intersect the horizontal $x$-axis also gives birth to a Markov chain. When $\la\longrightarrow\infty$, we prove that this Markov chain is asymptotically governed by two sequences of iid variables which are respectively Beta(2,2) and exponentially distributed. This leads to a precise description of the scaling limit of the typical cell. The limiting object is a random apeirogon with a unique accumulation point at its bottom, where we recall that the word \textit{apeirogon} denotes the closed convex hull of a countably infinite set of extreme points in the plane. We have chosen to name this limiting object \textit{menhir} in reference to the large man-made upright stones from the European Bronze Age, see Figure \ref{fig:Menhir}. 

\pass Let us endow the Euclidean plane with the orthogonal frame $(o,e_1,e_2)$. Let us consider a homogeneous Poisson point process $\cP$ of intensity $1$ on the lower half-plane $\bH^c$ where $\bH$ stands for $\bH=\{(x_1,x_2)\in\R^2 : x_2> 0 \}$.

\begin{figure}[ht!]
\begin{center}
\includegraphics[trim= 5.9cm 17.5cm 5.3cm 5.5cm, clip, scale=1.2,angle=180]{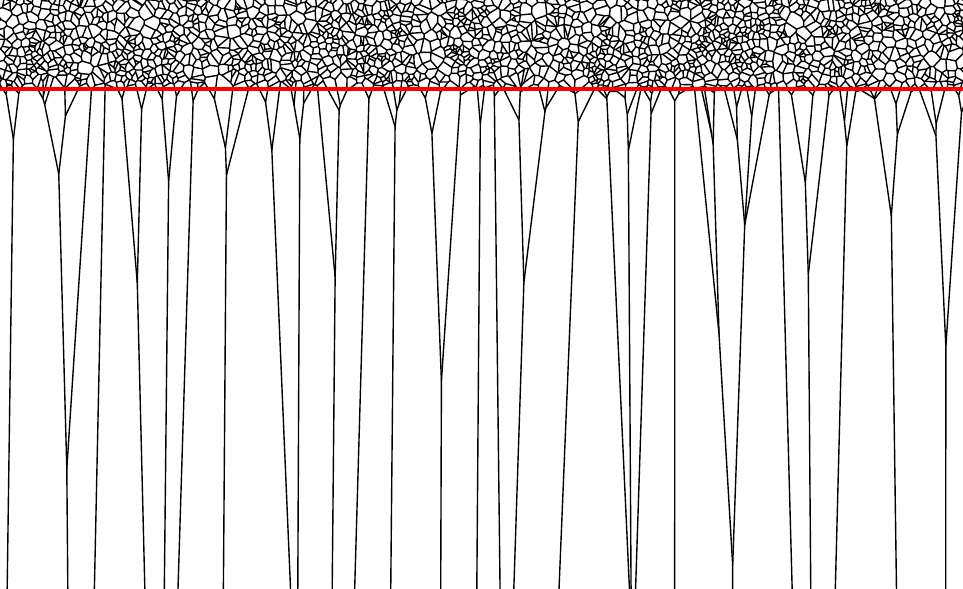}
\end{center}
\caption{A homogeneous Poisson point process $\cP$ of intensity $1$ on the lower half-plane $\bH^c$ inducing elongated Poisson-Voronoi cells on the upper half-plane $\bH$.}
\label{fig:Overwater}
\end{figure}

\noi Let $\cC^{(\la)}$ be the typical Voronoi cell with its highest vertex at $(0,\la)$ and $Z_{\textsl c}^{(\la)}$ its nucleus, see Section \ref{sec:condpois}. Let us endow the set of non-empty compact convex sets of $\R^2$ with the topology defined by the Hausdorff distance. For every $\la>0$, we consider the affine transformation $F^{(\la)}:\R^2\longrightarrow\R^2$ defined by
\begin{equation*}
F^{(\la)}(x,y)=(\la^{-\frac13}x,\la^{-1}y).
\end{equation*}

\pass The set $F^{(\la)}(\cC^{(\la)}-Z_{\textsl c}^{(\la)})$ is the renormalized typical cell seen from its nucleus $Z_{\textsl c}^{(\la)}$. Our first result is the description of its limit shape for the convergence in distribution.

\pass\textbf{Description of the limiting menhir $\cC^{(\infty)}$.}

\noi The menhir $\cC^{(\infty)}$ is the closed convex hull of the union of sets of \textit{left} and \textit{right} vertices $\{V_n^{\mathbf l}\}_n\cup\{V_n^{\mathbf r}\}_n$ which are defined iteratively below through two Markov chains $\{(B_n^{\mathbf l},H_n^{\mathbf l})\}_n$ and $\{(B_n^{\mathbf r},H_n^{\mathbf r})\}_n$, as seen in Figure \ref{fig:TargetPoints}. 

\pass\textbf{1. Initialization of two Markov chains.}
\begin{itemize}[parsep=-0.15cm,itemsep=0.25cm,topsep=0.2cm,wide=0.15cm,leftmargin=0.5cm]
\item[$\bullet$] Let $G\sim\mathrm{Gamma}(\tfrac83,1)$ be a Gamma distributed random variable.
\item[$\bullet$] Let $(U_{(1)},\ldots,U_{(6)})$ be the order statistics of a random vector $(U_1,\ldots,U_6)$ uniformly distributed in $(-1,1)^6$ and independent of $G$.
\item[$\bullet$] Let $\eps$ be a random variable independent of $G$ and $(U_1,\ldots,U_6)$ and equal to $3$ or $4$ with probability $\frac12$.
\item[$\bullet$] Let $(\Theta_{\textsl l},\Theta_{\textsl c},\Theta_{\textsl r})=\left(\tfrac{3}{2}G\right)^{\frac13}(U_{(1)},U_{(\eps)},U_{(6)})$.
\end{itemize}
The two Markov chains start at $(B_0^{\mathbf l},H_0^{\mathbf l}) =(\tfrac12 (\Theta_{\textsl c}-\Theta_{\textsl l}),1) $ and $(B_0^{\mathbf r},H_0^{\mathbf r})=(\tfrac12 (\Theta_{\textsl r}-\Theta_{\textsl c}),1)$ respectively.

\pass\textbf{2. Recursive construction of the two Markov chains.}\null

\noi We define the sequences $\{(B_n^{\mathbf l},H_n^{\mathbf l})\}_n$ and $\{(B_n^{\mathbf r},H_n^{\mathbf r})\}_n$ with the recursion identities\label{descBH}
\begin{equation}\label{eq:BnHnlrec}
(B_{n+1}^{\mathbf l},H_{n+1}^{\mathbf l})=\Big(\beta_n B_n^{\mathbf l},\mfrac{(B_n^{\mathbf l})^3H_n^{\mathbf l}}{(B_n^{\mathbf l})^3+\tfrac32\xi_n H_n^{\mathbf l}}\Big)
\text{ and }
(B_{n+1}^{\mathbf r},H_{n+1}^{\mathbf r})=\Big(\beta_n' B_n^{\mathbf r},\mfrac{(B_n^{\mathbf r})^3H_n^{\mathbf r}}{(B_n^{\mathbf r})^3+\tfrac32\xi_n' H_n^{\mathbf r}}\Big)
\end{equation}
where $\{\beta_n\}_n$, $\{\beta_n'\}_n$, $\{\xi_n\}_n$, $\{\xi_n'\}_n$ are four independent sequences of iid random variables such that $\beta_0\overset{(d)}{=}\beta_0'\sim\mbox{Beta}(2,2)$ and $\xi_0\overset{(d)}{=}\xi_0'\sim\mbox{Exp}(1)$.

\pass\textbf{3. Construction of the vertices of $\cC^{(\infty)}$ with the two Markov chains.} \null

\noi The highest vertex of the menhir is $V_0=-\Theta_{\textsl c}e_1+e_2$. We then introduce two sequences of points $\{V_n^{\mathbf l}=X_n^{\mathbf l}e_1+Y_n^{\mathbf l}e_2\}_n$ and $\{V_n^{\mathbf r}=X_n^{\mathbf r}e_1+Y_n^{\mathbf r}e_2\}_n$ by the following procedure: $V_0^{\mathbf l}=V_0^{\mathbf r}=V_0$ and for all $n\ge 1$, 
$V_n^{\mathbf l}$ (resp. $V_n^{\mathbf r}$) is the unique point with second coordinate  $Y_n^{\mathbf l}=H_n^{\mathbf l}$ (resp. $Y_n^{\mathbf r}=H_n^{\mathbf r}$)
and such that $V_n^{\mathbf l}$ (resp. $V_n^{\mathbf r}$) is on the half-line starting from $V_{n-1}^{\mathbf l}$ (resp. $V_{n-1}^{\mathbf r}$) and going to the {\it target point} at $-B_{n-1}^{\mathbf l} e_1$ (resp. $B_{n-1}^{\mathbf r} e_1)$, see Figure \ref{fig:TargetPoints}. We then define the two \textit{branches} $\cB_{\mathbf l}$ \label{defbranchBl} and $\cB_{\mathbf r}$ as the closed convex chains generated by the two sequences $\{V_n^{\mathbf l}\}_n$ and $\{V_n^{\mathbf r}\}_n$ respectively. 

\begin{figure}[ht!]
\centering
\includegraphics[scale=0.9]{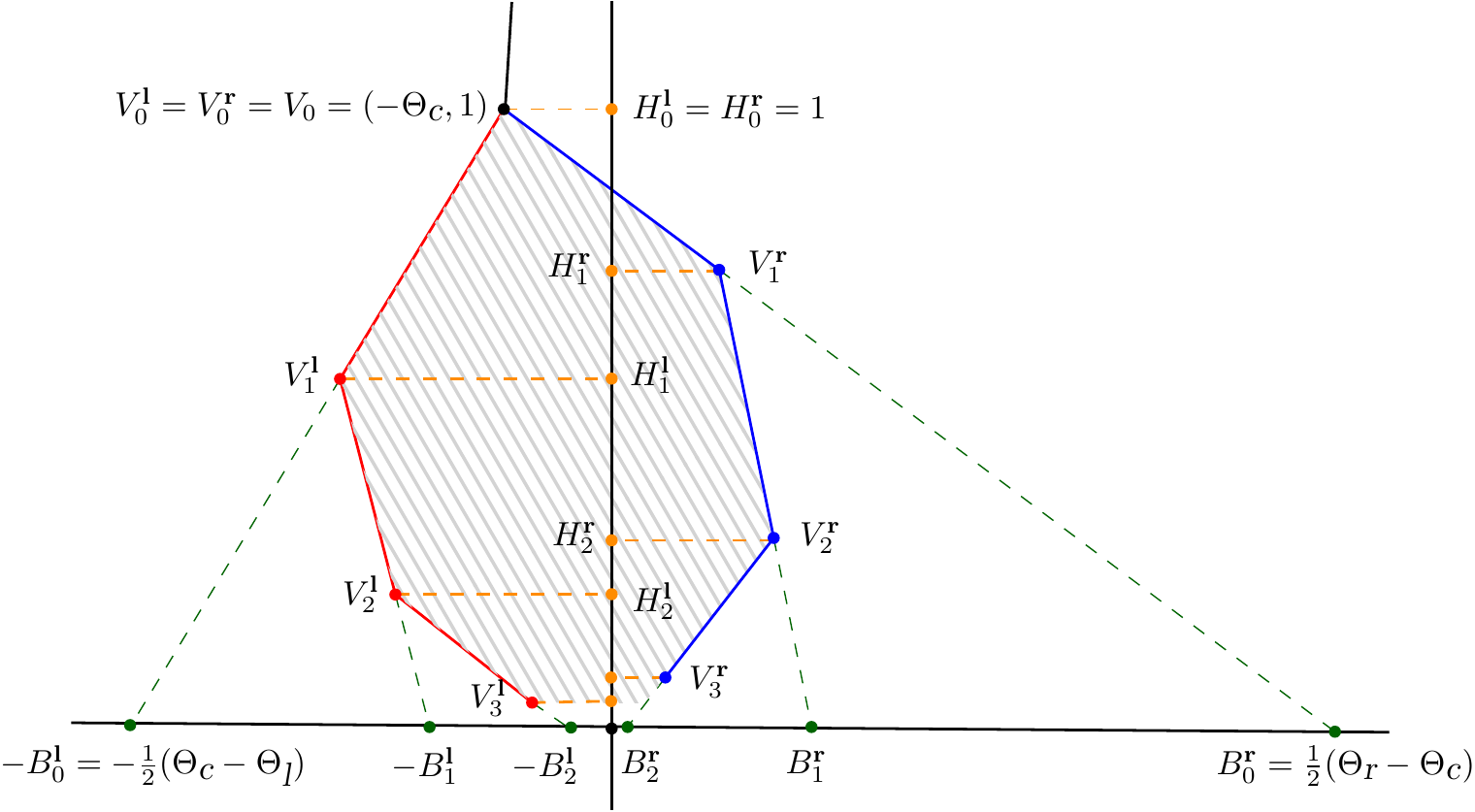}
\caption{Iterative construction of the two branches $\cB_{\mathbf l}$ and $\cB_{\mathbf r}$ of the menhir using the two sequences of target points $-B_n^{\mathbf l} e_1$ and $B_n^{\mathbf r} e_1$ respectively.}
\label{fig:TargetPoints}
\end{figure}

\pass In particular, the initial vertex is $V_0^{\mathbf l}=(X_0^{\mathbf l},Y_0^{\mathbf l})=(-\Theta_{\textsl c},1)$ and $\{X_n^{\mathbf l}\}_n$ satisfies the recursion relation\label{old2}
$X_n^{\mathbf l}-\frac{X_{n+1}^{\mathbf l}-X_n^{\mathbf l}}{H_{n+1}^{\mathbf l}-H_n^{\mathbf l}}H_n^{\mathbf l}=-B_n^{\mathbf l}$    
which leads to the formula
\begin{equation}\label{eq:defXnl}
V_n^{\mathbf l}=(X_n^{\mathbf l},Y_n^{\mathbf l})=\bigg(\negthickspace -\mfrac12(\Theta_{\textsl l}-\Theta_{\textsl c})H_n^{\mathbf l} - B_n^{\mathbf l} - H_n^{\mathbf l}\sum_{k=0}^{n-1}\mfrac{B_k^{\mathbf l}-B_{k+1}^{\mathbf l}}{H_{k+1}^{\mathbf l}},H_n^{\mathbf l}\bigg).    
\end{equation}

\noi Similarly, $V_0^{\mathbf r}=(X_0^{\mathbf r},Y_0^{\mathbf r})=(-\Theta_{\textsl c},1)$ and
\begin{equation*}
V_n^{\mathbf r}=(X_n^{\mathbf r},Y_n^{\mathbf r})=\bigg(\negthickspace -\mfrac12(\Theta_{\textsl r}+\Theta_{\textsl c})H_n^{\mathbf r}
+ B_n^{\mathbf r} + H_n^{\mathbf r}\sum_{k=0}^{n-1}\mfrac{B_k^{\mathbf r}-B_{k+1}^{\mathbf r}}{H_{k+1}^{\mathbf r}},H_n^{\mathbf r}\bigg).    
\end{equation*}

\noi The above description of the limiting menhir provides an explicit algorithm for simulating $\cC^{(\infty)}$ as seen in Figure \ref{fig:Menhir}.

\begin{figure}[ht!]
\begin{center}
\includegraphics[scale=0.545]{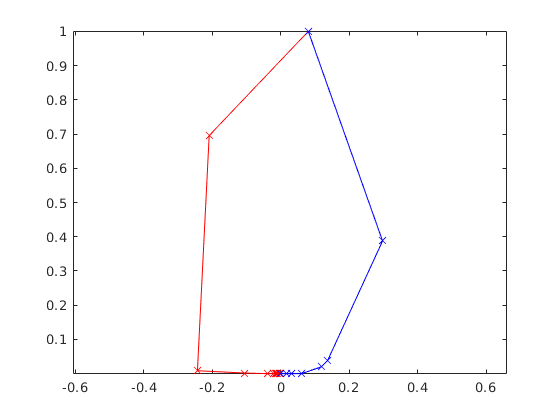}
\hspace{-0.8cm}\includegraphics[scale=0.545]{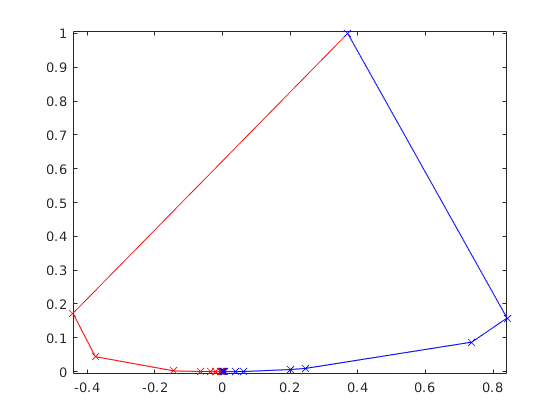}
\end{center}
\caption{Two simulations of the menhir: the red (resp. blue) polygonal line corresponds to the branch with vertices $\{V_n^{\mathbf l}\}_n$ (resp. $\{V_n^{\mathbf r}\}_n$).}
\label{fig:Menhir}
\end{figure}

\newpage

\pass We are now ready to state our main result.

\begin{theo}\label{theo:menhir} \noi

\noi $(i)$ {\it\bf Scaling limit.} The set $F^{(\la)}(\cC^{(\la)}-Z_{\textsl c}^{(\la)})$ converges in distribution for the Hausdorff metric when $\la\longrightarrow\infty$ to the apeirogon $\cC^{(\infty)}$ with a unique accumulation point at the origin.

\pass $(ii)$ {\it\bf Local behavior at the origin of the limiting menhir.} The two sequences $\Big\{\mfrac{Y_n^{\mathbf l}}{(X_n^{\mathbf l})^3}\Big\}_n$ and $\Big\{\mfrac{Y_n^{\mathbf r}}{(X_n^{\mathbf r})^3}\Big\}_n$ converge in distribution to the same non-de\-ge\-ne\-rate distribution.

\pass $(iii)$ {\it\bf Asymptotic number of vertices.} The total number of vertices $\cN(\cC^{(\la)})$ of $\cC^{(\la)}$ satisfies the following convergence when $\la\longrightarrow\infty$:
\begin{equation*}\label{eq:limitnumbersides}
\frac{\cN(\cC^{(\la)})}{\frac45\log\la}\overset{\P,L^1}{\longrightarrow} 1.
\end{equation*}
\end{theo}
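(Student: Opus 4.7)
The plan is to lift all three statements to the pair of Markov chains $\{(B_n^{(\la),\bullet},H_n^{(\la),\bullet})\}_n$ (for $\bullet\in\{\mathbf l,\mathbf r\}$) that encodes the successive left and right vertices of the prelimit cell $\cC^{(\la)}$; the chain, its one-step transition and the Poisson geometry behind it will be developed in a preparatory section. The key preliminary is a \emph{scaling limit of the Markov kernel}: after applying $F^{(\la)}$, one step of the chain converges jointly for both branches (and including the initial data) to the recursion (\ref{eq:BnHnlrec}), with $\beta_n\sim\mbox{Beta}(2,2)$ and $\xi_n\sim\mbox{Exp}(1)$ iid. The $\mbox{Gamma}(\tfrac{8}{3},1)$ and uniform order-statistics of the initialization emerge from a Palm--Slivnyak computation on the three nuclei governing the topmost vertex at height $\la$. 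Finite-dimensional Markov-chain convergence then yields Hausdorff convergence of the upper truncation $F^{(\la)}(\cC^{(\la)}-Z_{\textsl c}^{(\la)})\cap\{y\ge\eta\}$ to the corresponding truncation of the menhir for every $\eta>0$; a tightness argument controlled by the exponential decay rate of $H_n$ (see part $(iii)$) takes care of the tail below height $\eta$, establishing $(i)$.

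\textbf{Part $(iii)$.} From (\ref{eq:BnHnlrec}) one reads off
\[\frac{1}{H_{n+1}^{\mathbf l}}=\frac{1}{H_n^{\mathbf l}}+\frac{3\xi_n}{2(B_n^{\mathbf l})^3},\qquad B_n^{\mathbf l}=B_0^{\mathbf l}\prod_{k=0}^{n-1}\beta_k.\]
Since $\beta\sim\mbox{Beta}(2,2)$ satisfies $\E[\log\beta]=-\tfrac{5}{6}$, the strong law gives $n^{-1}\log(B_n^{\mathbf l})^{-3}\longrightarrow\tfrac{5}{2}$ a.s.; the geometric blow-up of $1/B_k^3$ makes the sum dominated by its last term, hence $n^{-1}\log(1/H_n^{\mathbf l})\longrightarrow\tfrac{5}{2}$. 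Translating back via $F^{(\la)}$, the prelimit chain terminates once its height has decayed from $\la$ down to $O(1)$, i.e.~after $n_{\mathbf l}^{(\la)}\sim\tfrac{2}{5}\log\la$ steps, and symmetrically for $n_{\mathbf r}^{(\la)}$. Summing the two branches yields $\tfrac{4}{5}\log\la$. Upgrading a.s.~to $L^1$ convergence relies on uniform integrability of $n^{-1}\log(1/H_n)$, which follows from second-moment control of $\sum\log\beta_k$.

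\textbf{Part $(ii)$.} The natural scaling-invariant quantity is $R_n^{\mathbf l}:=H_n^{\mathbf l}/(B_n^{\mathbf l})^3$, whose reciprocal $S_n^{\mathbf l}:=1/R_n^{\mathbf l}$ satisfies, by (\ref{eq:BnHnlrec}), the linear random difference equation
\[S_{n+1}^{\mathbf l}=\beta_n^3\,S_n^{\mathbf l}+\tfrac{3}{2}\beta_n^3\xi_n.\]
Since $\E[\log\beta^3]=-\tfrac{5}{2}<0$ and $\xi_n$ has exponential moments, Kesten--Goldie theory provides a unique non-degenerate stationary law for $S_n^{\mathbf l}$, hence for $R_n^{\mathbf l}$. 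The ratio $L_n^{\mathbf l}:=X_n^{\mathbf l}/B_n^{\mathbf l}$ obeys a coupled recursion derived from (\ref{eq:defXnl}) and driven by $(\beta_k,\xi_k)_{k<n}$, so that $(R_n^{\mathbf l},L_n^{\mathbf l})$ is an autonomous two-dimensional Markov chain; its unique stationary distribution gives convergence in law of $Y_n^{\mathbf l}/(X_n^{\mathbf l})^3=R_n^{\mathbf l}/(L_n^{\mathbf l})^3$. Equality of the two limit laws (left and right branches) follows from the ergodicity of the limit chain, which washes out the dependence on the initial values, together with the identical distributions of $(\beta_n,\xi_n)$ and $(\beta_n',\xi_n')$.

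\textbf{Main obstacle.} The hardest step will be the Markov-kernel scaling limit invoked in the strategy: one must compute the exact conditional distribution of the next prelimit vertex from the Poisson geometry of $\cP$, then extract the Beta$(2,2)$ and Exp$(1)$ factors together with their asymptotic independence from the past and from the other branch via a careful local analysis as $\la\to\infty$. A secondary difficulty is the uniform-in-$\la$ tail control of $H_n^{(\la)}$ needed to exchange the $n\to\infty$ and $\la\to\infty$ limits in part $(i)$.
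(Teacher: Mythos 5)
Your roadmap matches the paper's architecture closely: Palm calculus to identify the law of the three nuclei at the top vertex and the $\mathrm{Gamma}(\frac83,1)$/order-statistics initialization; encoding each branch by the two-dimensional Markov chain $(B_n,H_n)$ of half-base and height of successive isosceles triangles; the scaling limit of the kernel to the $\mathrm{Beta}(2,2)/\mathrm{Exp}(1)$ recursion; a Letac-type stationarity argument for the shape ratio in $(ii)$; and the strong law from $\E[\log\beta]=-\frac56$ for the vertex count in $(iii)$. Your factorization $Y_n/X_n^3 = R_n/L_n^3$ with $R_n=H_n/B_n^3$, $L_n=X_n/B_n$ is equivalent to the paper's $T_n^2/W_n^3$, and your two-dimensional autonomous Markov chain is the same one (Proposition~\ref{prop:unbusrapide}); the invocation of Kesten--Goldie is overkill, as only existence/uniqueness of the fixed point (Vervaat~\cite{Ver79}, Letac~\cite{Let}) is needed, not tail asymptotics.

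The one substantive gap is in how you pass from the idealized chain to $\cC^{(\la)}$ in parts $(i)$ and $(iii)$. The coupling of $(B_n^{(\la)},H_n^{(\la)})$ with $(B_n^{(\infty)},H_n^{(\infty)})$ (Proposition~\ref{prop:evenpascouplage}) is only valid up to the stopping time $\tau_\la(\eps_0)\approx\frac{2(1-\eps_0)}{5}\log\la$, i.e.\ it breaks down strictly before the branch hits the $x$-axis. Thus ``translating back via $F^{(\la)}$'' to get $\cN(\cB^{(\la)})\sim\frac25\log\la$ is not justified by the LLN on the idealized chain alone: one must control the remaining $O(\eps_0\log\la)$ steps of the prelimit chain in a regime where it no longer tracks the idealized one. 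The paper does this through a separate geometric argument (Proposition~\ref{prop:cestpossible}), using that after $\tau_\la(\eps_0)$ each step shrinks $B_n^{(\la)}$ by a factor $\le \frac1{\sqrt2}$ with probability $>\frac12$, together with a lower bound from the nearest-neighbor distance to $Z_{\textsl c}^{(\la)}$. Your ``tightness below height $\eta$'' step in $(i)$ runs into exactly the same issue, which the paper instead resolves via Lemma~\ref{lem:lesortdespointsaprescouplage} showing that all post-coupling vertices shrink to the origin. So the strategy is essentially correct, but the post-coupling regime requires an argument you have not supplied, and it cannot be deduced from the idealized chain alone.
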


\noi Our work is specific to the planar Poisson-Voronoi tessellation. As for many classical random spatial models (Wulff crystal, random planar maps, integer polytopes...), extending Theorem \ref{theo:menhir} beyond dimension two seems delicate. Our methods deeply rely on a coding via a branching process and a linear description of the boundary of $\cC^{(\la)}$ which are specific to dimension $2$. Nevertheless we expect to be able to extend to any dimension the description of the top of the cell $\cC^{(\la)}$ and notably Proposition \ref{prop:plambda}, i.e. the description of the Poisson point process conditioned on having a Voronoi vertex at height $\la$.
Another interesting line of investigation would consist in studying the whole set of cells with height larger than $\la$. For instance, we can ask about the pair-correlation function related to the point process defined as the section of the Voronoi skeleton and a horizontal line at height $\la$. This would provide additional information on the global properties of the population of high cells, as seen in Figure \ref{fig:Overwater}.

\pass{\bf Outline of the paper.} In Section \ref{sec:palm}, we make explicit the distribution of the Poisson point process conditional on having a Voronoi vertex at $(0,\la)$. We then introduce a sequence of triangles which describes the boundary of the cell $\cC^{(\la)}$ and which gives birth to a two-dimensional Markov chain. Our strategy of proof is then detailed at the end of Section \ref{sec:palm}. Section \ref{sec:coupling} is the technically most challenging part where we couple this Markov chain with an idealized Markov chain whose transition kernel can be nicely expressed in terms of two independent Beta and exponential variables. Finally, we use that coupling to prove Theorem \ref{theo:menhir} in Section \ref{sec:mainproof}.

\addtocontents{toc}{\vspace{0.25cm}}%
\section{A Markovian sequence of triangles}\label{sec:palm}

\noi In order to prove Theorem \ref{theo:menhir} we focus our study in Sections \ref{sec:palm}, \ref{sec:coupling} and \ref{sec:mainproof} on the sequence of consecutive edges of a Voronoi cell when going down from a vertex at $(0,\la)$ through the leftmost edge emanating from $(0,\la)$ and choosing afterwards at each next intersection the edge to the right until reaching the $x$-axis. This branch denoted by $\cB^{(\la)}$ is the left part of the boundary of the Voronoi cell having its highest vertex at $(0,\la)$ and ending at the first vertex below the $x$-axis. Precisely, the branch $\cB^{(\la)}$ is the polygonal line joining the first consecutive vertices $V_0^{(\la)}=(0,\la), V_1^{(\la)},\ldots, V_{\cN(\cB^{(\la)})}^{(\la)}$ where $\cN(\cB^{(\la)})$ is the number of edges of $\cB^{(\la)}$. 

\pass The vertex $V_0^{(\la)}=(0,\la)$ belongs to three Voronoi cells whose nuclei are denoted from left to right by $Z_{\textsl l}^{(\la)}$, $Z_{\textsl c}^{(\la)}$ and $Z_{\textsl r}^{(\la)}$. We are interested in the cell associated with $Z_{\textsl c}^{(\la)}$ whose highest vertex is $(0,\la)$. Notice that $\cB^{(\la)}$ starts with a portion of the bisecting line between $Z_{\textsl l}^{(\la)}$ and $Z_{\textsl c}^{(\la)}$ until it reaches the vertex $V_1^{(\la)}$ and is followed by consecutive portions $[V_n^{(\la)},V_{n+1}^{(\la)}]$ of bisecting lines between $Z_{\textsl c}^{(\la)}$ and a sequence of consecutive nuclei denoted by $\{Z_n^{(\la)}\}_n$ and such that $Z_0^{(\la)}=Z_{\textsl l}^{(\la)}$. To each edge belonging to $\cB^{(\la)}$, we associate the isosceles triangle with vertices $V_n^{(\la)}$, $Z_n^{(\la)}$ and $Z_{\textsl c}^{(\la)}$, see Figure \ref{fig:FirstTriangles} below. 

\begin{figure}[ht!]
\begin{center}
\includegraphics[scale=1]{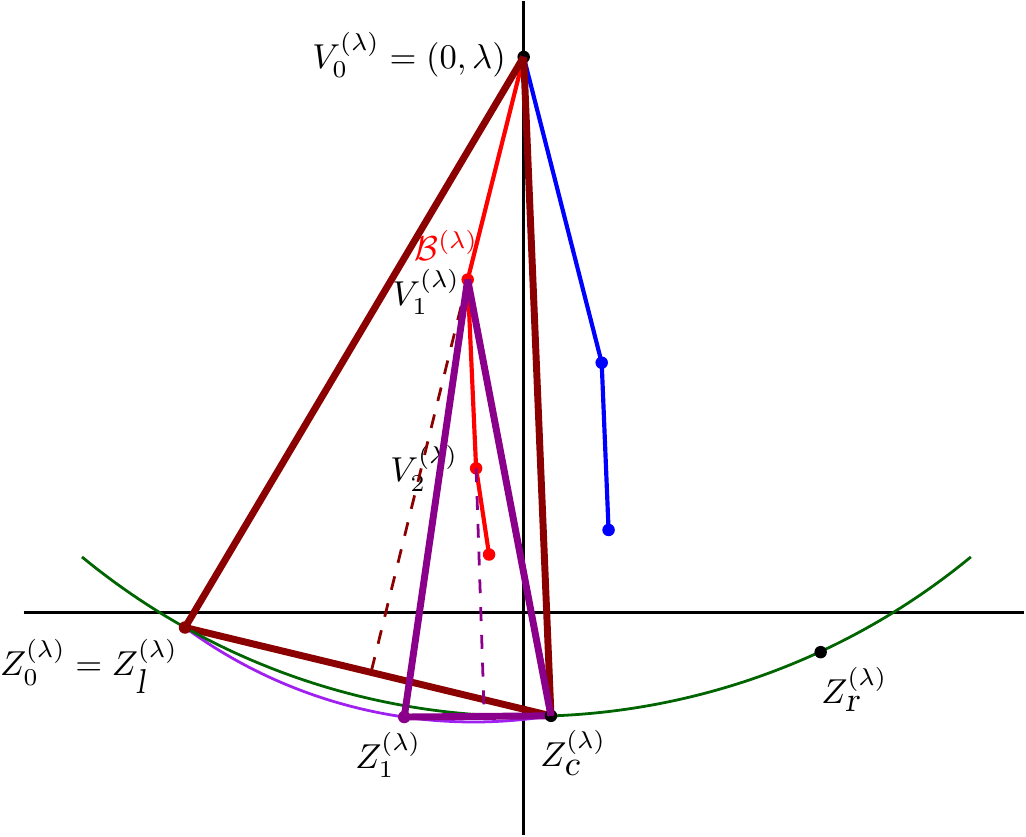}
\end{center}
\caption{The first two isosceles triangles with vertices $V_0^{(\la)}$, $Z_{\textsl l}^{(\la)}$ and $Z_c^{(\la)}$ and $V_1^{(\la)}$, $Z_1^{(\la)}$ and $Z_c^{(\la)}$ respectively.}
\label{fig:FirstTriangles}
\end{figure}

\noi In this section, we study the sequence of couples constituted with the normalized half-length of the basis and height of each of these triangles. In Sections \ref{sec:condpois} and \ref{sec:convpos}, we explain how to initiate this sequence. In Section \ref{sec:strategyproof}, we then construct the whole sequence and finally, in Section \ref{sec:bullet}, we explain how the strategy of proof of our main results will depend on its Markov properties. 

\subsection{The conditional Poisson point process when $(0,\la)$ is a vertex}\label{sec:condpois}

\pass

\noi We recall that the distribution of $\cP$ is invariant under the action of the horizontal translations. Let us introduce the point process $\cP^{(\la)}$ as the Palm version of $\cP$ conditioned on having a Voronoi vertex at the position $V_0^{(\la)}=(0,\la)$. In this context, we refer to $V_0^{(\la)}$ as a so-called \textit{typical vertex at height} $\la$. The associated nuclei $Z_{\textsl l}^{(\la)}$, $Z_{\textsl c}^{(\la)}$ and $Z_{\textsl r}^{(\la)}$ defined above are located on a circle centered at $V_0^{(\la)}$ and with radius larger than $\la$ equal to $(\la+\la^{-\frac13}R^{(\la)})$ for some positive random variable $R^{(\la)}$. Moreover, the angle between the half-lines $V_0^{(\la)}-\R_+ e_2$ and $V_0^{(\la)}+\R_+(Z_{\textsl l}^{(\la)}-V_0^{(\la)})$ (resp. $V_0^{(\la)}+\R_+(Z_{\textsl c}^{(\la)}-V_0^{(\la)})$, $V_0^{(\la)}+\R_+(Z_{\textsl r}^{(\la)}-V_0^{(\la)})$ $)$ is denoted by $\la^{-\frac23}\Theta_{\textsl l}^{(\la)}$ (resp. $\la^{-\frac23}\Theta_{\textsl c}^{(\la)}$, $\la^{-\frac23}\Theta_{\textsl r}^{(\la)}$) with $\Theta_{\textsl l}^{(\la)}<\Theta_{\textsl c}^{(\la)}<\Theta_{\textsl r}^{(\la)}$, see Figure \ref{fig:Quadruplet}. The purpose of Section \ref{sec:condpois} is to derive the distribution of $\cP^{(\la)}$ and of the quadruplet $(R^{(\la)},\Theta_{\textsl l}^{(\la)},\Theta_{\textsl c}^{(\la)},\Theta_{\textsl r}^{(\la)})$.

\begin{prop}\label{prop:plambda}
The Palm version $\cP^{(\la)}$ of $\cP$ conditioned on having a Voronoi vertex at $V_0^{(\la)}$ is
\begin{equation}\label{eq:plambda}
\cP^{(\la)}\overset{(d)}{=}\big[\cP\setminus D\big({V_0^{(\la)}}, \la+\la^{-\frac13}R^{(\la)}\big)\big]\cup \big\{Z_{\textsl l}^{(\la)},Z_{\textsl c}^{(\la)},Z_{\textsl r}^{(\la)}\big\}
\end{equation}
where the random quadruplet $(R^{(\la)},\Theta_{\textsl l}^{(\la)},\Theta_{\textsl c}^{(\la)},\Theta_{\textsl r}^{(\la)})$, which defines $(Z_{\textsl l}^{(\la)},Z_{\textsl c}^{(\la)},Z_{\textsl r}^{(\la)})$ as in Figure \ref{fig:Quadruplet}, has a
probability density function given at \eqref{eq:densityquadruplet}.
\end{prop}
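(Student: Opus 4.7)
Proposition \ref{prop:plambda} is a Palm-type description of the Poisson point process $\cP$ conditioned on the a priori zero-probability event that a Voronoi vertex sits at $V_0^{(\la)}=(0,\la)$, and the real work is extracting the explicit density of the quadruplet $(R^{(\la)},\Theta_{\textsl l}^{(\la)},\Theta_{\textsl c}^{(\la)},\Theta_{\textsl r}^{(\la)})$ from the standard multivariate Slivnyak--Mecke formula.

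The first step is to invoke the three-point Slivnyak--Mecke identity: for any non-negative measurable $F$,
\begin{equation*}
\E\Big[\sum_{(z_1,z_2,z_3)\in\cP^3}^{\neq}F(z_1,z_2,z_3,\cP\setminus\{z_1,z_2,z_3\})\Big]=\int_{(\bH^c)^3}\E\big[F(z_1,z_2,z_3,\cP)\big]\,\dd z_1\,\dd z_2\,\dd z_3,
\end{equation*}
and to restrict the inner sum to triples that are the three nuclei defining a Voronoi vertex. Such a triple is characterized by being equidistant from a common circumcenter $v=v(z_1,z_2,z_3)$ and by the absence of any further point of $\cP$ inside the open disk $D(v,r)$ of that common radius $r$. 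Only $D(v,r)\cap\bH^c$ actually carries Lebesgue mass of $\cP$, so the Poisson void formula contributes a factor $e^{-|D(v,r)\cap\bH^c|}$.

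The second step is to perform the change of variables $(z_1,z_2,z_3)\longmapsto(v,r,\theta_1,\theta_2,\theta_3)$ through $z_i=v+r(\sin\theta_i,-\cos\theta_i)$, so that $\theta_i$ is the signed angle, measured from the downward vertical $-e_2$, under which $z_i$ is seen from $v$. A direct $6\times6$ determinant computation---simplified with $\cos\theta_i-\cos\theta_j=-2\sin\tfrac{\theta_i+\theta_j}{2}\sin\tfrac{\theta_i-\theta_j}{2}$ and $\sin\theta_i-\sin\theta_j=2\cos\tfrac{\theta_i+\theta_j}{2}\sin\tfrac{\theta_i-\theta_j}{2}$---reveals that the Jacobian factorizes as $r^3$ times an explicit trigonometric polynomial in $(\theta_1,\theta_2,\theta_3)$. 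Ordering the triple from left to right as $\theta_{\textsl l}<\theta_{\textsl c}<\theta_{\textsl r}$ absorbs the combinatorial $1/3!$. Disintegrating with respect to $v$ and freezing $v=V_0^{(\la)}$, combined with the explicit circular-segment area $|D(V_0^{(\la)},r)\cap\bH^c|=r^2\arccos(\la/r)-\la\sqrt{r^2-\la^2}$, yields the conditional density of $(r,\theta_{\textsl l},\theta_{\textsl c},\theta_{\textsl r})$ on the constraint region $\{r\cos\theta_i\ge\la\}$ that enforces $z_i\in\bH^c$. A final rescaling $r=\la+\la^{-\frac13}R$ and $\theta_i=\la^{-\frac23}\Theta_i$ transports this density into the form claimed at \eqref{eq:densityquadruplet}.

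The identity \eqref{eq:plambda} itself then follows from Slivnyak's theorem: conditionally on the three nuclei $(Z_{\textsl l}^{(\la)},Z_{\textsl c}^{(\la)},Z_{\textsl r}^{(\la)})$, the remaining points of $\cP$ outside the closed disk $D(V_0^{(\la)},\la+\la^{-\frac13}R^{(\la)})$ form a homogeneous Poisson process of intensity $1$ on $\bH^c\setminus D(V_0^{(\la)},\la+\la^{-\frac13}R^{(\la)})$, while the inside is kept empty by the vertex condition. I anticipate the $6\times6$ Jacobian computation, together with keeping track of the geometric constraint $r\cos\theta_i\ge\la$ through the rescaling, to be the main technical obstacle; everything else is bookkeeping in a standard multi-point Palm derivation.
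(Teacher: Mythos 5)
Your proposal is correct and follows essentially the same route as the paper's proof: the three-point Slivnyak--Mecke identity restricted to Voronoi-defining triples, a change of variables $(z_1,z_2,z_3)\mapsto(v,r,\theta_1,\theta_2,\theta_3)$, the Poisson void probability for the empty cap $D(v,r)\cap\bH^c$, and finally the rescaling $r=\la+\la^{-\frac13}R$, $\theta_i=\la^{-\frac23}\Theta_i$. The single difference is cosmetic: where you propose grinding out the $6\times6$ Jacobian, the paper simply cites the classical spherical Blaschke--Petkantschin formula $\dd x\,\dd y\,\dd z=\rho^3\Delta(\tfrac{v_2}{\rho},\fhi)\,\dd\rho\,\dd\fhi\,\dd v$, which is exactly that determinant.
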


\begin{figure}[ht!]
\begin{center}
\includegraphics[scale=1]{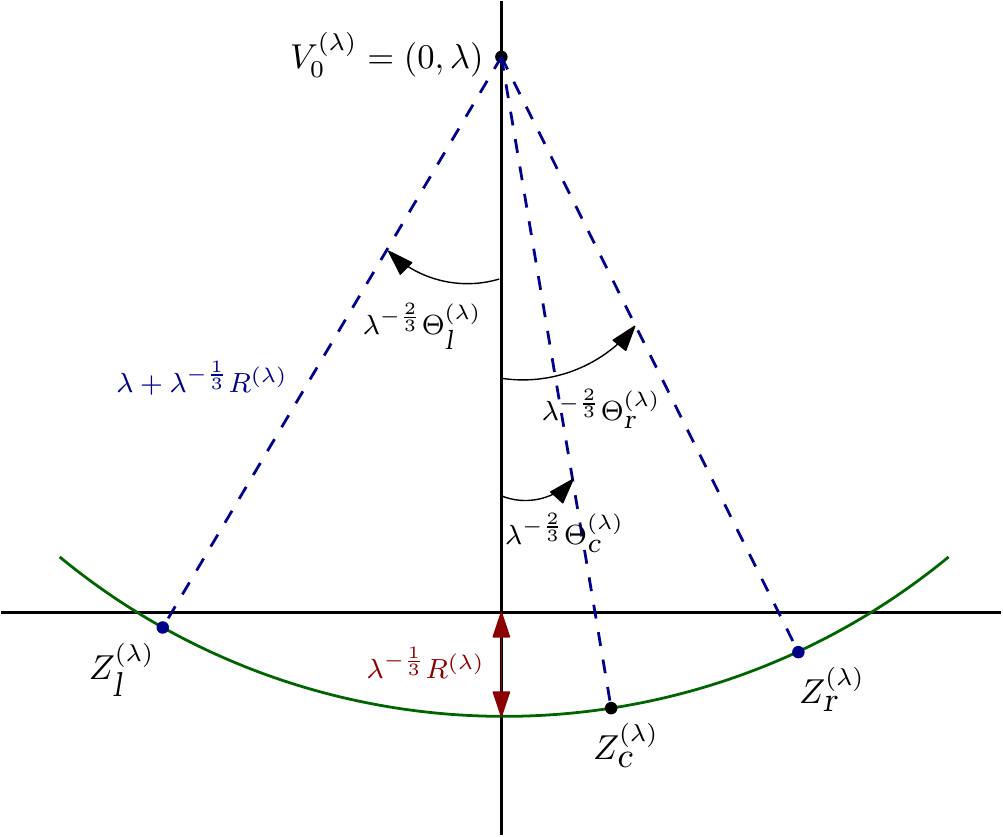}
\end{center}
\caption{The three nuclei $Z_{\textsl l}^{(\la)}$, $Z_{\textsl c}^{(\la)}$ and $Z_{\textsl r}^{(\la)}$ associated with the vertex $V_0^{(\la)}$ at height $\la$ and the quadruplet $(R^{(\la)},\Theta_{\textsl l}^{(\la)},\Theta_{\textsl c}^{(\la)},\Theta_{\textsl r}^{(\la)})$.}
\label{fig:Quadruplet}
\end{figure}

\begin{proof}
Let $f$ be a non-negative measurable function on the product space of $\R^2$ and the set of locally finite sets in $\bH^c$. For three points $x,y,z\in \bH^c$ which are not aligned, we denote by $D_{x,y,z}$ (resp. $c_{x,y,z}$) the unique disk containing the three points in its boundary (resp. the center of such disk) and by $\cP_{x,y,z}$ the set $\cP\cup\{x,y,z\}$. Moreover, we denote by $\cV(\cP)$ the point process of Voronoi vertices induced by $\cP$, by $\cP_{\ne}^3$ the set of all triplets of distinct points of $\cP$ and by $|\cdot|_2$ the two-dimensional Lebesgue measure. We recall that $c_{x,y,z}\in \cV(\cP_{x,y,z})$ if and only if $\cP\cap D_{x,y,z}=\emptyset$. Consequently, $\P[c_{x,y,z} \in \cV(\cP_{x,y,z})]=e^{-|D_{x,y,z}\cap {\bH^c}|_2}$. We then get 
\begin{align*}
\E\bigg[\sum_{v\in \cV(\cP)\cap \bH}f(v,\cP)\bigg]
& =\frac1{3!}\E\bigg[\sum_{(x,y,z) \in \cP_{\ne}^3}\indic_{\cV(\cP)\cap \bH}(c_{x,y,z})f(c_{x,y,z},\cP\setminus D_{x,y,z})\bigg] \\
& \hspace*{-1cm} = \frac16 \int_{(\bH^c)^3} \E\big[\indic_{\cV(\cP_{x,y,z})\cap \bH}(c_{x,y,z})f\big(c_{x,y,z},\cP_{x,y,z}\setminus D_{x,y,z}\big)\big]\dd x\dd y\dd z \\
& \hspace*{-1cm} = \int_{\{x_1<y_1<z_1\}} e^{-|D_{x,y,z}\cap {\bH^c}|_2}\E\big[f\big(c_{x,y,z},\cP_{x,y,z}\setminus D_{x,y,z}\big)\big]\indic_{\bH}(c_{x,y,z})\dd x\dd y\dd z
\end{align*}

\noi where we have used the independence of $\cP\cap D_{x,y,z}$ and $\cP\setminus D_{x,y,z}$. 

\pass Now we proceed with the following change of variables. Denoting by $\fhi=(\fhi_{\textsl l},\fhi_{\textsl c},\fhi_{\textsl r})$ and $\dd\fhi=\dd\fhi_{\textsl l}\dd\fhi_{\textsl c}\dd\fhi_{\textsl r}$, we apply the classical spherical Blaschke-Petkantschin formula
\begin{equation*}
\dd x\dd y\dd z = \rho^3\Delta\big(\tfrac{v_2}{\rho},\fhi\big)\dd \rho\dd\fhi\dd v
\end{equation*}
where $\rho$ is the radius of the disk $D_{x,y,z}\,$, $v=(v_1,v_2)\in\bH$ is the center $c_{x,y,z}$ and $\fhi_{\textsl l}$ (resp. $\fhi_{\textsl c},\fhi_{\textsl r}$) is the angle between the half-lines $c_{x,y,z}+\R_+e_2$ and $c_{x,y,z}+\R_+(x-c_{x,y,z})$ (resp. $c_{x,y,z}+\R_+(y-c_{x,y,z})$ and $c_{x,y,z}+\R_+(z-c_{x,y,z})$) and
\begin{align}\label{eq:defDelta}
\hspace{-0.251cm}\Delta\big(\tfrac{v_2}{\rho},\fhi\big)= 2
|\sin(\tfrac{\fhi_{\textsl c}-\fhi_{\textsl r}}{2})\sin(\tfrac{\fhi_{\textsl r}-\fhi_{\textsl l}}{2})
\sin(\tfrac{\fhi_{\textsl c}-\fhi_{\textsl l}}{2})|
\indic_{\{-\arccos(\frac{v_2}{\rho})<\fhi_{\textsl l}<\fhi_{\textsl c}<\fhi_{\textsl r}<\arccos(\frac{v_2}{\rho})\}\cap\{v_2<\rho\}}
\end{align}
is the area of the simplex spanned by the three unit vectors in directions $\fhi_{\textsl l},$ $\fhi_{\textsl c}$ and $\fhi_{\textsl r}$ respectively.

\pass Recalling that $x$, $y$ and $z$ are functions of $v$, $\rho$ and $\fhi$, we get
\begin{align*}
\E\bigg[\sum_{v\in \cV}f(v,\cP)\bigg]
= \int e^{-|D(v,\rho)\cap {\bH^c}|_2} \E\big[f(v,\cP_{x,y,z}\setminus D(v,\rho))\big] \rho^3\Delta\big(\tfrac{v_2}{\rho},\fhi\big)\dd \rho\dd\fhi\dd v.
\end{align*}

\noi Now we do the change of variables $\rho=v_2+v_2^{-\frac1{3}}r$ and $\fhi=v_2^{-\frac23}\theta$ where $\theta=(\theta_{\textsl l},\theta_{\textsl c},\theta_{\textsl r})$. We obtain

\begin{align}
\E\bigg[\sum_{v\in \cV}f(v,\cP)\bigg]
& = \int e^{-a(v_2,v_2+v_2^{-\frac1{3}}r)}\E\Big[f(v,\cP_{x,y,z}\setminus D(v,v_2+v_2^{-\frac1{3}}r))\Big]v_2^{-\frac73} \nonumber \\
& \hspace*{1.2cm}  \big(v_2+v_2^{-\frac13}r\big)^3\Delta\left(\mfrac{v_2}{v_2+v_2^{-\frac13}r},v_2^{-\frac23}\theta\right)\dd r\dd\theta \dd v \label{eq:formuleexplicitedensite}
\end{align}
where
\begin{align}
a(v_2,r) =|D(v,r)\cap \bH^c|_2 
= r^2 \left[\arccos\big(1-\tfrac{r-v_2}{r}\big)-\big(1-\tfrac{r-v_2}{r}\big)\big(\tfrac{2(r-v_2)}{r}-\tfrac{(r-v_2)^2}{r^2}\big)^{\frac12}\right].\label{eq:airecalotte}
\end{align}

\noi We deduce from \eqref{eq:formuleexplicitedensite} that for fixed $\la>0$, the probability density function of the quadruplet $(R^{(\la)},\Theta_{\textsl l}^{(\la)},\Theta_{\textsl c}^{(\la)},\Theta_{\textsl r}^{(\la)})$ is equal to
\begin{align}
\frac1{\delta(\la)}e^{-a(\la,\la+\la^{-\frac13}r)}\la^{-\frac73}(\la+\la^{-\frac13}r)^3
\Delta\bigg(\mfrac{\la}{\la+\la^{-\frac13}r},\la^{-\frac23}\theta\bigg)\label{eq:densityquadruplet}
\end{align}
where $\delta(\la)$ is the normalization factor. This completes the proof.
\end{proof}

\noi\textbf{Remark.} The function $\delta(\la)$ represents the density function at a point $(x,\la)$ of the intensity measure of the point process of vertices, i.e. 
\begin{equation*}\label{eq:densitesommets}
\delta(\la)= \int e^{-a(\la,\la+\la^{-\frac13}r)}\la^{-\frac73}(\la+\la^{-\frac13}r)^3\Delta\bigg(\mfrac{\la}{\la+\la^{-\frac13}r},\la^{-\frac23}
\theta\bigg)\dd r \dd\theta.
\end{equation*}
Estimating carefully each term in the integrand, we obtain that $\delta(\la)$ satisfies
\begin{equation}\label{eq:deltainfini}
\la^{\frac43}\delta(\la)\underset{\la\to\infty}{\sim} \delta(\infty)=2^{-\frac23}3^{-\frac43}\Gamma\left(\mfrac53\right).
\end{equation}

\subsection{The initial triangle}\label{sec:convpos}

\pass

\noi As announced earlier, the initial triangle is the triangle with vertices $V_0^{(\la)}$, $Z_{\textsl l}^{(\la)}$ and $Z_{\textsl c}^{(\la)}$. The points $Z_{\textsl l}^{(\la)}$ and $Z_{\textsl c}^{(\la)}$ are constructed from the quadruplet $(R^{(\la)},\Theta_{\textsl l}^{(\la)},\Theta_{\textsl c}^{(\la)},\Theta_{\textsl r}^{(\la)})$. In Proposition \ref{prop:simuquadruplet}, we prove the convergence in distribution and in total variation of the joint distribution of $(R^{(\la)},\Theta_{\textsl l}^{(\la)},\Theta_{\textsl c}^{(\la)},\Theta_{\textsl r}^{(\la)})$ and provide an explicit realization of the limiting distribution.

\begin{prop}\label{prop:simuquadruplet}
The quadruplet
$(R^{(\la)},\Theta_{\textsl l}^{(\la)},\Theta_{\textsl c}^{(\la)},\Theta_{\textsl r}^{(\la)})$ converges in distribution and in total variation to a quadruplet $(R,\Theta_{\textsl l},\Theta_{\textsl c},\Theta_{\textsl r})$ with probability density function
\begin{equation}\label{eq:palmdensity}
\frac{2^{-\frac43}3^{\frac43}}{\Gamma(\frac53)}\exp\Big(-\frac{4\sqrt{2}}{3}r^{\frac32}\Big)(\theta_{\textsl c} -\theta_{\textsl l})(\theta_{\textsl r} -\theta_{\textsl c})(\theta_{\textsl r}-\theta_{\textsl l})
\indicat_{\{-\sqrt{2r}<\theta_{\textsl l}<\theta_{\textsl c}<\theta_{\textsl r}<\sqrt{2r}\}}\indicat_{\{r>0\}}.
\end{equation}
Additionally, the following equality in distribution holds
\begin{equation}\label{eq:loilimquad}
(R,\Theta_{\textsl l},\Theta_{\textsl c},\Theta_{\textsl r})
\overset{(d)}{=}\left(\tfrac12\left(\tfrac{3}{2}G\right)^{\frac23},\left(\tfrac{3}{2}G\right)^{\frac13}U_{(1)},
\left(\tfrac{3}{2}G\right)^{\frac13}U_{(\eps)},\left(\tfrac{3}{2}G\right)^{\frac13}U_{(6)}\right)
\end{equation}
where:
\begin{itemize}[parsep=-0.15cm,itemsep=0.25cm,topsep=0.2cm,wide=0.15cm,leftmargin=0.5cm]
\item[-] the random variable $G\sim\mathrm{Gamma}(\tfrac83,1)$ is Gamma distributed,
\item[-] the random vector $(U_{(1)},\ldots,U_{(6)})$ is the order statistics of a random vector $(U_1,\ldots,U_6)$ uniformly distributed in $(-1,1)^6$ and independent of $G$,
\item[-] the random variable $\eps$ is independent of $G$ and $(U_1,\ldots,U_6)$ and equal to $3$ or $4$ with probability $\frac12$.
\end{itemize}
\end{prop}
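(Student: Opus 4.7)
The plan is to start from the explicit density \eqref{eq:densityquadruplet} given by Proposition \ref{prop:plambda}, use the asymptotic \eqref{eq:deltainfini} of the normalisation factor, compute the pointwise limit as $\la\to\infty$, and then upgrade this to convergence in total variation via Scheff\'e's lemma (the pointwise limit is automatically a probability density, being the pointwise limit of a family of such, so $L^{1}$ convergence of densities holds, which is total variation convergence).

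The key step is the asymptotic expansion of the spherical cap area. Writing $R=\la+\la^{-1/3}r$ and $h=R-\la=\la^{-1/3}r$, both terms in $a(\la,R)=R^{2}\arccos(\la/R)-\la\sqrt{R^{2}-\la^{2}}$ are of the form $\sqrt{2h}\,\la^{3/2}(1+O(h/\la))$; their leading parts cancel and the next-order contributions combine to produce
\begin{equation*}
a(\la,\la+\la^{-1/3}r)\;\xrightarrow[\la\to\infty]{}\;\tfrac{4\sqrt{2}}{3}\,r^{3/2}.
\end{equation*}
Simultaneously, $\sin(\la^{-2/3}t/2)\sim\la^{-2/3}t/2$ and $\la^{2/3}\arccos(\la/R)\to\sqrt{2r}$ (which yields the support $\{-\sqrt{2r}<\theta_{\textsl l}<\theta_{\textsl c}<\theta_{\textsl r}<\sqrt{2r}\}$ in the indicator), hence the factor $\Delta$ from \eqref{eq:defDelta} is equivalent to $\tfrac14\la^{-2}(\theta_{\textsl c}-\theta_{\textsl l})(\theta_{\textsl r}-\theta_{\textsl c})(\theta_{\textsl r}-\theta_{\textsl l})$. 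Together with $\la^{-7/3}(\la+\la^{-1/3}r)^{3}\sim\la^{2/3}$ and $\la^{4/3}\delta(\la)\to\delta(\infty)=2^{-2/3}3^{-4/3}\Gamma(5/3)$, all powers of $\la$ cancel and the density \eqref{eq:densityquadruplet} converges pointwise (hence, by Scheff\'e, in total variation) to the expression \eqref{eq:palmdensity}.

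For the explicit realisation \eqref{eq:loilimquad}, I would perform the bijective change of variables $r=\tfrac12(3g/2)^{2/3}$ and $\theta_{i}=(3g/2)^{1/3}u_{i}$ for $i\in\{\textsl l,\textsl c,\textsl r\}$. This turns the exponent into $-g$, the support into $\{g>0,\ -1<u_{\textsl l}<u_{\textsl c}<u_{\textsl r}<1\}$, and, after folding in the Jacobian and the cubic factor, yields a density proportional to $g^{5/3}e^{-g}(u_{\textsl c}-u_{\textsl l})(u_{\textsl r}-u_{\textsl c})(u_{\textsl r}-u_{\textsl l})$. Using $\Gamma(8/3)=\tfrac53\Gamma(5/3)$, the constant works out to exactly $\tfrac{1}{\Gamma(8/3)}\cdot\tfrac{45}{16}$, so the joint density factorises as the $\mathrm{Gamma}(8/3,1)$ density in $g$ times $\tfrac{45}{16}(u_{\textsl c}-u_{\textsl l})(u_{\textsl r}-u_{\textsl c})(u_{\textsl r}-u_{\textsl l})\indicat_{\{-1<u_{\textsl l}<u_{\textsl c}<u_{\textsl r}<1\}}$. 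A direct computation of uniform order statistics gives densities $\tfrac{45}{8}(u_{\textsl c}-u_{\textsl l})(u_{\textsl r}-u_{\textsl c})^{2}$ and $\tfrac{45}{8}(u_{\textsl c}-u_{\textsl l})^{2}(u_{\textsl r}-u_{\textsl c})$ for $(U_{(1)},U_{(3)},U_{(6)})$ and $(U_{(1)},U_{(4)},U_{(6)})$ respectively; averaging with weights $\tfrac12$ (the randomisation by $\eps$) and using $(u_{\textsl r}-u_{\textsl c})+(u_{\textsl c}-u_{\textsl l})=u_{\textsl r}-u_{\textsl l}$ reproduces precisely that second factor, identifying the claimed stochastic representation.

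The main obstacle is the cancellation in the expansion of $a(\la,\la+\la^{-1/3}r)$: one must expand each term one order beyond its obvious $\sqrt{2h}\,\la^{3/2}$ leading behaviour in order to detect the nontrivial limit $\tfrac{4\sqrt{2}}{3}r^{3/2}$. Everything else reduces to tracking powers of $\la$, Jacobians, and numerical constants.
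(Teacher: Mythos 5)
Your proposal is correct and follows the same two-step route as the paper: pointwise convergence of the density \eqref{eq:densityquadruplet} (with the delicate second-order cancellation in $a(\la,\la+\la^{-1/3}r)$ producing $\tfrac{4\sqrt2}{3}r^{3/2}$) together with Scheff\'e's lemma, and then a change of variables combined with the order-statistics identity for $(U_{(1)},U_{(\eps)},U_{(6)})$. One small caveat: the parenthetical claim that ``the pointwise limit is automatically a probability density, being the pointwise limit of a family of such'' is false in general (mass can escape to infinity); what actually guarantees that the limit integrates to $1$, and hence that Scheff\'e applies, is precisely the asymptotics $\la^{4/3}\delta(\la)\to\delta(\infty)$ of \eqref{eq:deltainfini}, which you already invoke, so the argument stands once that parenthetical is corrected. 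For the stochastic representation you perform a single joint change of variables $(r,\theta)\mapsto(g,u)$ and then factor the resulting density, whereas the paper first derives the marginal of $R$ and then observes the independence of $R$ and $(2R)^{-1/2}(\Theta_{\textsl l},\Theta_{\textsl c},\Theta_{\textsl r})$; these are computationally equivalent, and your constants ($\tfrac{45}{16}$, $\tfrac{45}{8}$) check out.
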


\begin{proof}
\noi We look for the asymptotics of each factor of the integrand in the right-hand side of \eqref{eq:densitesommets}. We deduce from \eqref{eq:airecalotte} that 
\begin{align*} 
a(\la,\la+\la^{-\frac13}r)
&\underset{\la\to\infty}{\sim}\frac{4\sqrt2}{3}r^{\frac32}.
\end{align*}

\noi Thanks to \eqref{eq:defDelta}, we obtain
\begin{align*} 
\la^{-\frac73}(\la+\la^{-\frac13}r)^3\Delta\big(\tfrac{\la}{\la+\la^{-\frac13}r},\la^{-\frac23}\theta\big)\underset{\la\to\infty}{\sim} \mfrac14 \la^{-\frac43}
(\theta_{\textsl c} -\theta_{\textsl l})(\theta_{\textsl r} -\theta_{\textsl c})(\theta_{\textsl r}-\theta_{\textsl l})
\indicat_{\{-\sqrt{2r}<\theta_{\textsl l}<\theta_{\textsl c}<\theta_{\textsl r}<\sqrt{2r}\}}.
\end{align*}

\noi This and \eqref{eq:deltainfini} prove that the probability density function given at \eqref{eq:densityquadruplet} converges almost everywhere when $\la\to\infty$ to the expression at \eqref{eq:palmdensity}. It remains to apply Scheff\'e's Lemma to get the required convergence in distribution and in total variation.

\pass Let us show the realization of the quadruplet $(R,\Theta_{\textsl l},\Theta_{\textsl c},\Theta_{\textsl r})$. We deduce from \eqref{eq:palmdensity} that the probability density function of the distribution of $R$ is proportional to $\exp(-\tfrac{4\sqrt{2}}{3}r^{\frac32}) r^3$. Noticing that for any non-negative measurable function $g$ on $\R_+$,
\begin{equation*}
\int_0^{\infty} g(r)\exp(-\tfrac{4\sqrt{2}}{3}r^{\frac32})r^3\dd r\propto \int_0^{\infty} g(\tfrac1{2}(\tfrac32 u)^{\frac23})e^{-u}u^{\frac53}\dd u,
\end{equation*}
we obtain that $R\overset{(d)}{=} \tfrac12\left(\tfrac{3}{2}G\right)^{\frac23}$. Moreover, we deduce from \eqref{eq:palmdensity} that the variable $R$ is independent of $(2R)^{-\frac12}(\Theta_{\textsl l},\Theta_{\textsl c},\Theta_{\textsl r})$.

\pass It then remains to show that the distribution of $(U_{(1)},U_{(\varepsilon)},U_{(6)})$ has a probability density function proportional to $(v-u)(w-v)(w-u)\indicat_{\{-1<u<v<w<1\}}$. For any non-negative measurable function $h$ on $(-1,1)^3$ we get
\begin{align}\label{eq:simuu1}
\E\big[h(U_{(1)},U_{(3)},U_{(6)})\big]
& = 6!\int h(u_1,u_3,u_6)\indicat_{\{-1<u_1<\cdots<u_6<1\}}\dd u_1\cdots \dd u_6 \nonumber\\
& = 6!\int h(u,v,w)(v-u)(w-v)^2\indicat_{\{-1<u<v<w<1\}}\dd u\dd v \dd w.
\end{align}

\noi In the same way,
\begin{align}\label{eq:simuu2}
\E\big[h(U_{(1)},U_{(4)},U_{(6)})\big]= 6! \int h(u,v,w)(v-u)^2(w-v)\indicat_{\{-1<u<v<w<1\}}\dd u\dd v \dd w.
\end{align}

\noi Writing the half-sum of \eqref{eq:simuu1} and \eqref{eq:simuu2}, we obtain the required result.
\end{proof}

\noi We are now ready to introduce the first isosceles triangle with vertices $V_0^{(\la)}=(0,\la)$, $Z_{\textsl l}^{(\la)}$ and $Z_{\textsl c}^{(\la)}$ and the two associated lengths. Let us define the two quantities $B_0^{(\la)}$ and $H_0^{(\la)}$ as the normalized lengths of, respectively, the half-basis and the height of that triangle, i.e.
\begin{equation}\label{eq:defBoHo}
B_0^{(\la)}=\mfrac{\la^{-\frac13}}{2}\|Z_{\textsl c}^{(\la)}-Z_{\textsl l}^{(\la)}\| \quad\text{and}\quad H_0^{(\la)}=\la^{-1}\dd\big(V_0^{(\la)},Z_{\textsl c}^{(\la)}+\R(Z_{\textsl c}^{(\la)}-Z_{\textsl l}^{(\la)})\big)
\end{equation}
where $\|\cdot\|$ is the Euclidean norm and $\dd(\cdot,\cdot)$ is the Euclidean distance in $\R^2$.

\noi Applying the cosine law in the triangle with vertices $V_0^{(\la)}$, $Z_{\textsl l}^{(\la)}$ and $Z_{\textsl c}^{(\la)}$ we get
\begin{equation}\label{eq:B0H0}
(B_0^{(\la)},H_0^{(\la)})=(\la+\la^{-\frac13}R^{(\la)})
\bigg(\la^{-\frac13}\tan\big(\mfrac{\la^{-\frac23}}{2}(\Theta_{\textsl c}^{(\la)}-\Theta_{\textsl l}^{(\la)})\big),\la^{-1}
\cos\big(\mfrac{\la^{-\frac23}}{2}(\Theta_{\textsl c}^{(\la)}-\Theta_{\textsl l}^{(\la)})\big)\bigg).
\end{equation}

\noi Therefore, Taylor expanding the expression above and using that $(R^{(\la)},\Theta_{\textsl l}^{(\la)},\Theta_{\textsl c}^{(\la)},\Theta_{\textsl r}^{(\la)})$ converges in distribution thanks to Proposition \ref{prop:simuquadruplet}, we obtain that the couple $(B_0^{(\la)},H_0^{(\la)})$ converges in distribution to an explicit limit given in Lemma \ref{lem:convBoHo} below. 

\begin{lem}\label{lem:convBoHo}
When $\la\longrightarrow\infty$, the following convergence holds:
\begin{equation}\label{eq:convBoHo}
(B_0^{(\la)},H_0^{(\la)})\overset{(d)}{\longrightarrow}\big(\mfrac12(\Theta_{\textsl c}-\Theta_{\textsl l}),1\big).
\end{equation}
\end{lem}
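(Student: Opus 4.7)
The plan is to read off the result directly from the explicit expression \eqref{eq:B0H0} by passing to the limit inside a continuous function of the quadruplet $(R^{(\la)},\Theta_{\textsl l}^{(\la)},\Theta_{\textsl c}^{(\la)},\Theta_{\textsl r}^{(\la)})$, for which convergence in distribution is granted by Proposition \ref{prop:simuquadruplet}. Concretely, I would define a map $F_\la:\R_+\times\R^3\to\R^2$ by
\begin{equation*}
F_\la(r,\theta_{\textsl l},\theta_{\textsl c},\theta_{\textsl r})=(\la+\la^{-\frac13}r)\Big(\la^{-\frac13}\tan\big(\tfrac{\la^{-\frac23}}{2}(\theta_{\textsl c}-\theta_{\textsl l})\big),\la^{-1}\cos\big(\tfrac{\la^{-\frac23}}{2}(\theta_{\textsl c}-\theta_{\textsl l})\big)\Big),
\end{equation*}
so that \eqref{eq:B0H0} reads $(B_0^{(\la)},H_0^{(\la)})=F_\la(R^{(\la)},\Theta_{\textsl l}^{(\la)},\Theta_{\textsl c}^{(\la)},\Theta_{\textsl r}^{(\la)})$, and check that $F_\la$ converges locally uniformly on compact sets to the continuous map $F_\infty(r,\theta_{\textsl l},\theta_{\textsl c},\theta_{\textsl r})=\big(\tfrac12(\theta_{\textsl c}-\theta_{\textsl l}),1\big)$.

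First, I would Taylor expand: since $\la^{-\frac23}(\theta_{\textsl c}-\theta_{\textsl l})\to 0$ on any compact set, one has $\tan(\tfrac{\la^{-\frac23}}{2}(\theta_{\textsl c}-\theta_{\textsl l}))=\tfrac{\la^{-\frac23}}{2}(\theta_{\textsl c}-\theta_{\textsl l})+O(\la^{-2})$ and $\cos(\tfrac{\la^{-\frac23}}{2}(\theta_{\textsl c}-\theta_{\textsl l}))=1+O(\la^{-\frac43})$. Substituting into $F_\la$ and multiplying through by the prefactor $\la+\la^{-\frac13}r=\la(1+\la^{-\frac43}r)$, the first coordinate becomes $\tfrac12(\theta_{\textsl c}-\theta_{\textsl l})+O(\la^{-\frac43})$ and the second coordinate becomes $1+O(\la^{-\frac43})$, uniformly in $(r,\theta_{\textsl l},\theta_{\textsl c},\theta_{\textsl r})$ lying in any compact subset of $\R_+\times\R^3$.

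Next, Proposition \ref{prop:simuquadruplet} gives $(R^{(\la)},\Theta_{\textsl l}^{(\la)},\Theta_{\textsl c}^{(\la)},\Theta_{\textsl r}^{(\la)})\overset{(d)}{\to}(R,\Theta_{\textsl l},\Theta_{\textsl c},\Theta_{\textsl r})$. By Skorokhod's representation theorem one may assume this convergence holds almost surely on a suitable probability space; since the limit quadruplet is almost surely finite and the convergence $F_\la\to F_\infty$ is locally uniform, one concludes that $F_\la(R^{(\la)},\Theta_{\textsl l}^{(\la)},\Theta_{\textsl c}^{(\la)},\Theta_{\textsl r}^{(\la)})\to F_\infty(R,\Theta_{\textsl l},\Theta_{\textsl c},\Theta_{\textsl r})=(\tfrac12(\Theta_{\textsl c}-\Theta_{\textsl l}),1)$ almost surely, hence in distribution, which is precisely \eqref{eq:convBoHo}.

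No serious obstacle is anticipated here; the only point requiring a little care is the joint passage from convergence of $(R^{(\la)},\Theta_{\textsl l}^{(\la)},\Theta_{\textsl c}^{(\la)},\Theta_{\textsl r}^{(\la)})$ together with the continuity/local uniformity of $F_\la\to F_\infty$, which is handled cleanly by Skorokhod coupling (or, equivalently, by a standard extended continuous mapping argument). The explicit Taylor remainders shown above make the uniform control completely routine.
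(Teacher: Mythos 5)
Your argument is correct and follows essentially the same route as the paper: the paper derives Lemma~\ref{lem:convBoHo} directly by ``Taylor expanding'' the explicit expression \eqref{eq:B0H0} and invoking the distributional convergence of the quadruplet from Proposition~\ref{prop:simuquadruplet}, which is precisely what you do. Your write-up simply spells out the underlying generalized continuous mapping (or Skorokhod coupling) step and the uniform-on-compacts control of the Taylor remainders that the paper leaves implicit.
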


\subsection{The subsequent triangles}\label{sec:strategyproof}

\pass 

\noi We recall that $\{V_n^{(\la)}\}_n$ is the set of consecutive Voronoi vertices with non-negative second coordinate that are reached when walking along the left branch $\cB^{(\la)}$ by choosing the right direction at each intersection. The sequence $\{Z_n^{(\la)}\}_n$ is the set of consecutive Voronoi nuclei such that for $n\ge 1$, $V_n^{(\la)}$ belongs to the three Voronoi cells associated with $Z_{n-1}^{(\la)}$, $Z_n^{(\la)}$ and $Z_{\textsl c}^{(\la)}$. The subsequent triangles are then the triangles with vertices $V_n^{(\la)}$, $Z_{n}^{(\la)}$ and $Z_{\textsl c}^{(\la)}$.

\pass For $n\ge \cN(\cB^{(\la)})$ where we recall that $\cN(\cB^{(\la)})$ coincides with the index of the first $V_n^{(\la)}$ in $\bH^c$, we fix $V_n^{(\la)}=V_{\cN(\cB^{(\la)})}^{(\la)}$. For all $n<\cN(\cB^{(\la)})$, we introduce the two quantities $B_n^{(\la)}$ and $H_n^{(\la)}$ as the normalized lengths of, respectively, the half-basis and the height of the triangle with vertices $V_n^{(\la)}$, $Z_n^{(\la)}$ and $Z_{\textsl c}^{(\la)}$, i.e.
\begin{equation}\label{eq:defBnHn}
B_n^{(\la)}=\mfrac{\la^{-\frac13}}{2}\|Z_n^{(\la)}-Z_{\textsl c}^{(\la)}\| \quad\text{and}\quad H_n^{(\la)}=\la^{-1}\dd\big(V_n^{(\la)},Z_{\textsl c}^{(\la)}+\R(Z_n^{(\la)}-Z_{\textsl c}^{(\la)})\big).
\end{equation}
In particular \eqref{eq:defBnHn} extends \eqref{eq:defBoHo} with the convention $Z_{\textsl l}^{(\la)}=Z_0^{(\la)}$.

\begin{prop}\label{prop:cestmarkov}
The sequence $\{(B_n^{(\la)},H_n^{(\la)})\}_n$ coincides for $n < \cN(\cB^{(\la)})$ with a  homogeneous Markov chain whose transition probability $p^{(\la)}$ is given by \eqref{eq:formuleexactpla}. 
\end{prop}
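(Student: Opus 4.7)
The plan is to reveal the exploration of the left branch $\cB^{(\la)}$ one triangle at a time, using the spatial independence of the Poisson process. At each step $n$, once $V_0^{(\la)}, Z_{\textsl l}^{(\la)}, Z_{\textsl c}^{(\la)}, Z_{\textsl r}^{(\la)}, Z_1^{(\la)}, V_1^{(\la)}, \ldots, Z_n^{(\la)}, V_n^{(\la)}$ have been unveiled, I would observe that the disks $D_k^{(\la)}$ centered at $V_k^{(\la)}$ and passing through $Z_{k-1}^{(\la)}, Z_k^{(\la)}, Z_{\textsl c}^{(\la)}$ for $k \le n$ must contain no other points of $\cP$. By the Poisson property, the restriction of $\cP$ to $\bH^c \setminus \bigcup_{k \le n} D_k^{(\la)}$ is then a Poisson process of intensity $1$ that is independent of all the randomness revealed so far.

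To locate the next triangle, I would parametrize by $t \ge 0$ the half-line starting at $V_n^{(\la)}$, orthogonal to the segment $[Z_n^{(\la)}, Z_{\textsl c}^{(\la)}]$ and oriented away from $V_n^{(\la)}$, and consider the family of disks $\{D(t)\}_{t\ge 0}$ passing through $Z_n^{(\la)}, Z_{\textsl c}^{(\la)}$ and centered on this half-line. The vertex $V_{n+1}^{(\la)}$ then corresponds to the smallest $t$ for which $D(t)$ encounters a point of $\cP$ on its boundary, namely $Z_{n+1}^{(\la)}$. A key geometric claim, to be established from the convexity of the Voronoi cell of $Z_{\textsl c}^{(\la)}$, is that the search region $S_n := \bigcup_{t \ge 0} D(t) \setminus D_n^{(\la)}$ lies entirely in the open half-plane opposite to $V_n^{(\la)}$ with respect to the line $(Z_n^{(\la)} Z_{\textsl c}^{(\la)})$, and is disjoint from the previously explored disks $D_0^{(\la)}, \ldots, D_{n-1}^{(\la)}$. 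Thanks to this disjointness, the conditional law of $Z_{n+1}^{(\la)}$ given the entire past depends only on the current triangle $(V_n^{(\la)}, Z_n^{(\la)}, Z_{\textsl c}^{(\la)})$, and by horizontal translation invariance, only on the pair $(B_n^{(\la)}, H_n^{(\la)})$. This yields the homogeneous Markov property.

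The transition kernel $p^{(\la)}$ is then obtained by integrating the Poisson density over the possible locations of $Z_{n+1}^{(\la)}$ in $S_n \cap \bH^c$: for a candidate $z$ on the arc $\partial D(t(z))$, the infinitesimal probability that $Z_{n+1}^{(\la)} \in \dd z$ is $\exp(-|(D(t(z)) \setminus D_n^{(\la)}) \cap \bH^c|_2)\,\dd z$, and a change of variables analogous to the spherical Blaschke-Petkantschin formula employed in Proposition \ref{prop:plambda} transfers this into the joint density of $(B_{n+1}^{(\la)}, H_{n+1}^{(\la)})$, yielding the formula \eqref{eq:formuleexactpla}. The main obstacle is the geometric disjointness claim, which amounts to a careful monotonicity argument along the convex boundary of the cell; once this is settled, the remaining step is a somewhat tedious but routine change of variables, with particular care required to handle the intersection with $\bH^c$ whenever the sweeping disk $D(t)$ crosses the $x$-axis.
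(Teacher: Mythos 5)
Your proposal follows essentially the paper's strategy: condition on the current triangle, exploit the Poisson structure of the as-yet-unexplored region, and turn the first-hitting description of $Z_{n+1}^{(\la)}$ into a density for $(B_{n+1}^{(\la)},H_{n+1}^{(\la)})$ by a change of variables. You do make one genuine conceptual addition: you isolate as a lemma the geometric fact that the sweep beyond $D_n^{(\la)}$ does not re-enter $D_0^{(\la)},\ldots,D_{n-1}^{(\la)}$, which is precisely what justifies the homogeneous Markov property. The paper folds this into the single unproved assertion that, conditionally on $(B_n^{(\la)},H_n^{(\la)})=(b,h)$, the process $\cP^{(\la)}$ is a Poisson process in $\bH^c$ outside $D_n^{(\la)}$; making the hidden hypothesis visible is a worthwhile improvement, but you only flag it as a claim rather than proving it, and it is not entirely trivial (it requires controlling how the circumdisks of \emph{all} earlier vertices sit relative to the line $(Z_n^{(\la)} Z_{\textsl c}^{(\la)})$, not just $D_{n-1}^{(\la)}$). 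You also correctly keep $\cap\,\bH^c$ in the exponent, whereas \eqref{eq:formuleexactpla} uses the full area $|\Delta_n^{(\la)}|_2$; the two agree only on the set where $\Delta_n^{(\la)}\subset\bH^c$, a point worth being careful about.

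Where the proposal is substantively short is the derivation of \eqref{eq:formuleexactpla} itself, which is really what the proposition asserts. You describe the change of variables as ``somewhat tedious but routine'' and gesture at an analogue of the spherical Blaschke--Petkantschin formula, but the paper's proof does not use Blaschke--Petkantschin here: it applies the Mecke--Slivnyak formula to reach the intermediate identity \eqref{eq:applimeckepourp}, then performs two explicit changes of variables producing the Jacobian \eqref{eq:vraijacob}, the admissible set $S_{(b,h)}$ in \eqref{eq:domainesautorisesbh}, and the cap-difference area \eqref{eq:formuleairecap}. These specific expressions, not just their existence, are the deliverable of the proposition and are used repeatedly in the coupling argument of Section \ref{sec:coupling}; deferring them as ``routine'' skips the part of the proof that actually carries weight.
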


\begin{proof}
We start by computing explicitly the exact transition probability of the Markov chain $\{(B_n^{(\la)},H_n^{(\la)})\}_n$ once the point $Z_{\textsl c}^{(\la)}=z_{\textsl c}$ is fixed. Conditional on the event $\{(B_n^{(\la)},H_n^{(\la)})=(b,h)\}$ where $(b,h)$ is given, the points $Z_n^{(\la)}$ and $V_n^{(\la)}$ are fully determined, up to a rotation around $z_{\textsl c}$. Assuming for instance that the line containing $z_{\textsl c}$ and $Z_n^{(\la)}$ is horizontal, we get
\begin{equation*}
Z_n^{(\la)}=z_n=z_{\textsl c}-2b e_1\quad\text{and}\quad V_n^{(\la)}=z_{\textsl c}-b e_1-h e_2.
\end{equation*}
Moreover, the point process $\cP^{(\la)}$ is distributed as $\cP^{(\la)}_{(b,h)}$ which is the union of $\{z_{\textsl c},z_n\}$ and of a homogeneous Poisson point process in $\bH^c$ and outside of the disk centered at $v_n$  and containing $z_{\textsl c}$ and $z_n$ on its boundary.

\pass Let us define for any $z'\in \bH^c$ the function
\begin{equation}
G^{(\la)}_{(b,h)}(z')=\bigg(\mfrac{\la^{-\frac13}}{2}\|z'-z_{\textsl c}\|,\la^{-1}\dd(c_{z_{\textsl c},z_n,z'},(z_{\textsl c}+\R(z'-z_{\textsl c})))\bigg)
\end{equation}
where we recall that $c_{x,y,z}$ is the center of the unique circle containing the three points $x,y,z\in\R^2$. In other words, $G^{(\la)}_{(b,h)}(z')$ is the new couple $(b',h')$ corresponding to the triangle with vertices $z_{\textsl c}$, $z_n$ and $z'$.

\pass It follows from Mecke-Slivnyak's formula that for any non-negative measurable function $f$,
\begin{align}
\E\Big[f({B_{n+1}^{(\la)}},{H_{n+1}^{(\la)}})\,\Big|\ (B_n^{(\la)},H_n^{(\la)})=(b,h) \Big ]
& = \E\Bigg[\sum_{z'\in\cP^{(\la)}_{(b,h)}}f(G^{(\la)}_{(b,h)}(z'))\indic_{\cV}(c_{z_{\textsl c},z_n,z'})\Bigg] \nonumber\\
& = \int f(G^{(\la)}_{(b,h)}(z'))\P\big[\cP^{(\la)}_{(b,h)}\cap \Delta_n^{(\la)} = \emptyset\big]\dd z' \nonumber\\
& = \int f(G^{(\la)}_{(b,h)}(z'))e^{-|\Delta_n^{(\la)}|_2}\dd z'\label{eq:applimeckepourp}
\end{align}
where $\Delta_n^{(\la)}$ is the set
\begin{equation*}
\Delta_n^{(\la)}=D_{z_{\textsl c},z_n,z'}\setminus D_{z_{\textsl c},z_{n-1},z_n}.
\end{equation*}

\noi We now wish to apply two consecutive changes of variables. The first one consists in replacing $z'$ by the couple $(b',h')=G^{(\la)}_{(b,h)}(z')$. We first show that $(b',h')\in S_{(b,h)}$ where $S_{(b,h)}$ is the set
\begin{equation}\label{eq:domainesautorisesbh}
S_{(b,h)}=\left\{(b',h'): 0<b'<b\text{ and }\la^{-\frac23}(b^2-b'^2)^{\frac12}<h'<(h^2+\la^{-\frac43}(b^2-b'^2))^{\frac12}\right\}.
\end{equation}

\noi Indeed, since $z'$ lies on the arc between $z_{\textsl c}$ and $z_n$, the distance $b'=\|z'-z_{\textsl c}\|$ is less than $b=\|z_n-z_{\textsl c}\|$. Moreover, the radius of the disk $D_{z_{\textsl c},z_n,z'}$ is equal to $(\la^2 h'^2+\la^{\frac23}b'^2)^{\frac12}$ and is less than the radius of the disk $D_{z_{\textsl c},z_{n-1},z_n}$, equal to $(\la^2 h^2+\la^{\frac23}b^2)^{\frac12}$, which explains the upper bound for $h'$. We refer to Figure \ref{fig:FirstTriangles} for the case $n=0$. Finally, the radius of $D_{z_{\textsl c},z_n,z'}$ is larger than $\la^{\frac13}b$ since it is the length of the hypotenuse $[c_{z_{\textsl c},z_n,z'},z_{\textsl c}]$ of a right-triangle with one side of length $\la^{\frac13}b$. This justifies the lower bound for $h'$. 

\pass The next change of variable consists in writing the polar coordinates $(\rho',\theta')$ of $z'$ with respect to the origin at $z_{\textsl c}$ and the $x$-axis $z_{\textsl c}+\R(z_n-z_{\textsl c})$ in function of the couple $(b',h')$:
\begin{align*}
\Bigg\{\begin{array}{ll}
\rho' & = 2 \la^{\frac13}b' \\
\theta' & = \arctan\Big(\mfrac{\la^{\frac13}b}{(\la^2h'^2-\la^{\frac23}(b^2-b'^2))^{\frac12}}\Big)-\arctan\big(\mfrac{\la^{\frac13}b'}{\la h'}\big)
\end{array}.
\Bigg.
\end{align*}

\noi In particular, we deduce that the corresponding Jacobian can be written as
\begin{align}\label{eq:vraijacob}
\frac{\dd z'}{\dd b'\dd h'}=\frac{\rho'\dd \rho'\dd \theta'}{\dd b'\dd h'}
= \mfrac{4b'}{h'^2+\la^{-\frac43}b'^2}\big(b\big(1-\la^{-\frac43}\mfrac{b^2-b'^2}{h'^2}\big)^{-\frac12}-b'\big).
\end{align}

\noi We now turn our attention to the calculation of the area  $|\Delta_n^{(\la)}|_2$ which only depends on $b$, $h$, $b'$ and $h'$ and not on the actual positions of the three points $z_{\textsl c}$, $z_n$ and $v_n$. The set $\Delta_n^{(\la)}$ is nothing but the difference between two caps. In our setting, each circular cap is coded by the half-length $\ell$ of the cap basis and by the distance $L$ from the center of the circle to the cap basis. The area of such cap is given by the formula
\begin{equation*}\label{eq:caparea}
(\ell^2+L^2)\big(\arctan (\mfrac{\ell}{L})-\mfrac{\ell L}{\ell^2+L^2}\big).
\end{equation*}
In our situation, we apply the above formula to the two couples $(\ell=\la^{\frac13}b, L=\la h)$ and  $(\ell=\la^{\frac13}b, L=(\la^2 h'^2-\la^{\frac23}(b^2-b'^2))^{\frac12})$, see the set $\Delta_0^{(\la)}$ between the purple and green circles in Figure \ref{fig:FirstTriangles}. Consequently, we get
\begin{align}
|\Delta_n^{(\la)}|_2 & = (\la^2h'^2+\la^{\frac23}b'^2)\big(\arctan\Big(\mfrac{\la^{\frac13}b}{(\la^2h'^2-\la^{\frac23}(b^2-b'^2))^{\frac12}}\Big) \nonumber \\
& \hspace{0.5cm} - \mfrac{\la^{\frac13}b}{\la^2h'^2+\la^{\frac23}b'^2}\big(\la^2h'^2-\la^{\frac23}(b^2-b'^2)\big)^{\frac12}\big) \nonumber \\
& \hspace{1cm} - (\la^2h^2+\la^{\frac23}b^2)\big(\arctan\big(\la^{-\frac23}\mfrac{b}{h}\big)-\la^{-\frac23}\mfrac{b}{h}
\big(1+\la^{-\frac43}\mfrac{b^2}{h^2}\big)^{-1}\big).
\label{eq:formuleairecap}
\end{align}

\noi Combining \eqref{eq:applimeckepourp} and \eqref{eq:vraijacob}, we obtain the following explicit formula for the transition probability, i.e.
\begin{equation}\label{eq:formuleexactpla}
p^{(\la)}((b,h),(b',h')) = \mfrac{4b'}{h'^2+\la^{-\frac43}b'^2}\big(b\big(1-\la^{-\frac43}\mfrac{b^2-b'^2}{h'^2}\big)^{-\frac12}-b'\big)e^{-|\Delta_n^{(\la)}|_2}\indicat_{S_{(b,h)}}(b',h')
\end{equation}
where $|\Delta_n^{(\la)}|_2$ is given at \eqref{eq:formuleairecap}.
\end{proof}

\subsection{How to use the triangles to prove the limit shape result?}\label{sec:bullet}

\pass

\noi We explain now how to study the limit of the Markov chain induced by the sequence of triangles defined in Section \ref{sec:strategyproof} and how to deduce from it the convergence of the renormalized Voronoi cell to the limiting menhir $\cC^{(\infty)}$.
\begin{itemize}[parsep=-0.15cm,itemsep=0.45cm,topsep=0.25cm,wide=0.15cm,leftmargin=0.5cm]
\item In Lemma \ref{lem:lessommetsretrouves} below, we write the sequence of vertices $\{V_n^{(\la)}\}_n$ starting at $V_0^{(\la)}=(0,\la)$ as a function of the Markov sequence 
$\{(B_n^{(\la)},H_n^{(\la)})\}_n$
and of the initial angle $\frac12(\Theta_{\textsl c}^{(\la)}+\Theta_{\textsl r}^{(\la)})$ between the vertical direction and the line from $V_0^{(\la)}$ to $Z_{\textsl r}^{(\la)}$. Note that the initial state $(B_0^{(\la)},H_0^{(\la)})$ is itself a functional of $(R^{(\la)},\Theta_{\textsl c}^{(\la)},\Theta_{\textsl r}^{(\la)})$ by \eqref{eq:B0H0}.

\item In Section \ref{sec:coupling}, we show that it is possible to couple the Markov chain $\{(B_n^{(\la)},H_n^{(\la)})\}_n$ with transition $p^{(\la)}$ with a Markov chain $\{(B_n^{(\infty)},H_n^{(\infty)})\}_n$  whose transition $p^{(\infty)}$ is the pointwise limit of $p^{(\la)}$, both starting with the same initial distribution.

\item In Section \ref{sec:mainproof}, we apply the affine transformation $F^{(\la)}$ to the sequence $\{V_n^{(\la)}-Z_{\textsl c}^{(\la)}\}_n$ and we proceed to a Taylor expansion in the formulas of Lemma \ref{lem:lessommetsretrouves} which gives the limit of the branch $\cB^{(\la)}$ and subsequently of the Voronoi cell $\cC^{(\la)}$. The technique is based on a mixture of the coupling arguments from Section \ref{sec:coupling} and some stability properties of the Markov chain $\{(B_n^{(\infty)},H_n^{(\infty)})\}_n$.
\end{itemize}

\noi Lemma \ref{lem:lessommetsretrouves} is of geometric nature and makes explicit the sequence of vertices $\{V_n^{(\la)}\}_n$. For any $x,y,z\in\R^2$, we denote by $\angle (xyz)$ the angle between the two half-lines $y+\R_+(x-y)$ and $y+\R_+(z-y)$.

\begin{lem}\label{lem:lessommetsretrouves}
The sequence $\{V_n^{(\la)}\}_n$ is defined recursively by 
$V_0^{(\la)}=(0,\la)$, $\angle (oV_0^{(\la)}V_1^{(\la)})=\frac12(\Theta_{\textsl c}^{(\la)}+\Theta_{\textsl l}^{(\la)})$ and the following identities:
\begin{equation}\label{eq:dist2somments}
\la^{-1}\|V_n^{(\la)}-V_{n+1}^{(\la)}\| = H_n^{(\la)}-\big((H_{n+1}^{(\la)})^2-\la^{-\frac43}((B_n^{(\la)})^2-(B_{n+1}^{(\la)})^2)\big)^{\frac12}
\end{equation}
and
\begin{equation}\label{eq:angle2somments}
\angle (V_n^{(\la)} V_{n+1}^{(\la)} V_{n+2}^{(\la)}) 
= \arcsin \bigg(\la^{-\frac23}\frac{B_n^{(\la)}}{\big((H_{n+1}^{(\la)})^2+\la^{-\frac43}(B_{n+1}^{(\la)})^2\big)^{\frac12}}\bigg)-\arctan\bigg(\la^{-\frac23}\frac{{B_{n+1}^{(\la)}}}{{H_{n+1}^{(\la)}}}\bigg). 
\end{equation}
\end{lem}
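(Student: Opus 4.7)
The plan is to reduce both identities to Euclidean trigonometry in the isosceles triangles introduced in Section~\ref{sec:strategyproof}, exploiting the fact that $V_{n+1}^{(\la)}$ is the circumcenter of the three generators $\{Z_n^{(\la)}, Z_{n+1}^{(\la)}, Z_{\textsl c}^{(\la)}\}$. Suppressing the $(\la)$ superscript and writing $M_n$ for the midpoint of $[Z_{\textsl c}, Z_n]$, both $V_n$ and $V_{n+1}$ are equidistant from $Z_{\textsl c}$ and $Z_n$, so they both lie on the perpendicular bisector of $[Z_{\textsl c}, Z_n]$, which passes through $M_n$. From the definitions one has $\|M_n - Z_{\textsl c}\| = \la^{1/3}B_n$ and $\|V_k - M_k\| = \la H_k$, and Pythagoras in the right triangle $V_k M_k Z_{\textsl c}$ (right angle at $M_k$ by the bisector property) yields the common identity $\|V_k - Z_{\textsl c}\|^2 = \la^2 H_k^2 + \la^{2/3} B_k^2$ for $k=n,n+1$.

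To prove \eqref{eq:dist2somments} I would apply Pythagoras a second time in the right triangle $V_{n+1} M_n Z_{\textsl c}$, which also has a right angle at $M_n$, to obtain
\begin{equation*}
\|V_{n+1} - M_n\|^2 = \|V_{n+1} - Z_{\textsl c}\|^2 - \|Z_{\textsl c} - M_n\|^2 = \la^2\bigl(H_{n+1}^2 - \la^{-4/3}(B_n^2 - B_{n+1}^2)\bigr).
\end{equation*}
The Voronoi edge $[V_n, V_{n+1}]$ is a segment of this bisector lying entirely on one side of the line $(Z_{\textsl c}, Z_n)$, so the two vertices sit on the same ray emanating from $M_n$, with $V_{n+1}$ the closer one. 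Subtracting the two distances from $M_n$ and dividing by $\la$ produces \eqref{eq:dist2somments}.

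For \eqref{eq:angle2somments} I would compute the two half-angles at the apex $V_{n+1}$ in the isosceles triangles $V_{n+1}Z_{\textsl c}Z_n$ and $V_{n+1}Z_{\textsl c}Z_{n+1}$, using the common circumradius $\rho_{n+1} = \la\sqrt{H_{n+1}^2 + \la^{-4/3}B_{n+1}^2}$. In the first triangle the half-apex angle at $V_{n+1}$ has sine $(\la^{1/3}B_n)/\rho_{n+1}$, which produces the $\arcsin$ term; in the second one it has tangent $(\la^{1/3}B_{n+1})/(\la H_{n+1})$, which produces the $\arctan$ term. Each perpendicular bisector bisects the corresponding apex angle at $V_{n+1}$, and the half-line from $V_{n+1}$ to $V_n$ (respectively $V_{n+2}$) runs along the bisector of $[Z_{\textsl c},Z_n]$ (respectively $[Z_{\textsl c},Z_{n+1}]$); measuring both half-lines against the common reference direction $V_{n+1}\to Z_{\textsl c}$ and combining the two half-angles delivers \eqref{eq:angle2somments}. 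The initial condition $\angle(oV_0V_1)=\tfrac12(\Theta_{\textsl c}+\Theta_{\textsl l})$ reduces to the same bisector argument applied at the topmost vertex $V_0$, the relevant angles being the spherical coordinates of $Z_{\textsl l}$ and $Z_{\textsl c}$ from Proposition~\ref{prop:plambda}.

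The main obstacle is the orientation bookkeeping in the last step: one must verify carefully the positions of $V_n$ and $V_{n+2}$ relative to $V_{n+1}$ and to the midpoints $M_n$ and $M_{n+1}$ on their respective bisectors, in order to combine the two half-angles with the correct signs. This bookkeeping relies on the same monotonicity of the sequence $\{H_k\}_k$ that already underlies the difference (rather than sum) structure of \eqref{eq:dist2somments}.
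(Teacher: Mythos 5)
Your proposal is correct and follows essentially the same route as the paper: the paper likewise applies Pythagoras twice at the foot of the perpendicular from $V_{n+1}$ to the line $Z_{\textsl c}^{(\la)}Z_n^{(\la)}$ (your $M_n$, the paper's $A_n$) to get \eqref{eq:dist2somments}, and decomposes $\angle(V_n^{(\la)}V_{n+1}^{(\la)}V_{n+2}^{(\la)})$ via the common reference ray $V_{n+1}^{(\la)}\to Z_{\textsl c}^{(\la)}$ to get \eqref{eq:angle2somments}. Your explicit circumradius/half-apex-angle formulation is a mild rephrasing of the same trigonometry, and your remark about orientation bookkeeping is apt — since $V_n^{(\la)}$ and $V_{n+2}^{(\la)}$ lie opposite the midpoints from $V_{n+1}^{(\la)}$, the angles $\angle(Z_{\textsl c}^{(\la)}V_{n+1}^{(\la)}V_n^{(\la)})$ and $\angle(Z_{\textsl c}^{(\la)}V_{n+1}^{(\la)}V_{n+2}^{(\la)})$ are the supplements of your two half-apex angles, and the $\pi$'s cancel in the difference with the sign coming out right because $B_n^{(\la)}>B_{n+1}^{(\la)}$.
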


\begin{proof} The angle $\angle (oV_0^{(\la)}V_1^{(\la)})$ is the angle between the vertical direction and the bisecting line of the segment $[Z_{\textsl c}^{(\la)},Z_{\textsl l}^{(\la)}]$, i.e. $\frac12 (\Theta_{\textsl c}^{(\la)}+\Theta_{\textsl l}^{(\la)})$. Let $A_n^{(\la)}$ be the projection of $V_n^{(\la)}$ onto the line joining $Z_{\textsl c}^{(\la)}$ and $Z_n^{(\la)}$. Applying Pythagora's Theorem twice, we get
\begin{align*}
\|V_n^{(\la)}-V_{n+1}^{(\la)}\| & =
\la H_n^{(\la)}-\|V_{n+1}^{(\la)}-A_n^{(\la)}\| \\
& =\la H_n^{(\la)}-\big(\|Z_ {\textsl c}^{(\la)}-V_{n+1}^{(\la)}\|^2-\la^{\frac23}(B_n^{(\la)})^2\big)^{\frac12} \\
& = \la H_n^{(\la)}-\big(((\la H_{n+1}^{(\la)})^2+\la^{\frac23}(B_{n+1}^{(\la)})^2)-(B_n^{(\la)})^2     \big)^{\frac12}
\end{align*}
which implies \eqref{eq:dist2somments}.

\noi Moreover,
\begin{align*}
\angle(V_n^{(\la)}V_{n+1}^{(\la)}V_{n+2}^{(\la)})
& = \angle(Z_{\textsl c}^{(\la)}V_{n+1}^{(\la)}V_n^{(\la)})-\angle(Z_{\textsl c}^{(\la)}V_{n+1}^{(\la)}V_{n+2}^{(\la)}) \nonumber 
\end{align*}
which gives \eqref{eq:angle2somments} and completes the proof.
\end{proof}

\addtocontents{toc}{\vspace{0.25cm}}%
\section{Coupling with an idealized Markov chain}\label{sec:coupling}

\noi In this section, we introduce a Markov chain $\{(B_n^{(\infty)},H_n^{(\infty)})\}_n$ whose transition kernel is the pointwise limit of the transition kernel of the Markov chain $\{(B_n^{(\la)},H_n^{(\la)})\}_n$. Section \ref{sec:probarepres} is devoted to a nice probabilistic representation of this new Markov chain while Section \ref{sec:couplage} deals with an explicit coupling of the two Markov chains on a good event of high probability. 

\subsection{An idealized Markov chain and its probabilistic representation}\label{sec:probarepres}

\pass 

\noi Proposition \ref{prop:egaliteloiidealisee} below provides the explicit kernel of $\{(B_n^{(\infty)},H_n^{(\infty)})\}_n$ and a useful representation of that Markov chain.

\begin{prop}\label{prop:egaliteloiidealisee}
When $\la\longrightarrow\infty$, the transition kernel $p^{(\la)}((b,h),(b',h'))$ converges pointwise in $(0,\infty)^2$ to $p^{(\infty)}((b,h),(b',h'))$ where
\begin{equation}\label{eq:transideale}
p^{(\infty)}((b,h),(b',h')) = \mfrac{4(b-b')b'}{h'^2}\exp\big(-\mfrac23b^3\big(\mfrac1{h'}-\mfrac1{h}\big)\big)\indicat_{(0,b)\times(0,h)}(b',h').
\end{equation}
Moreover, a representation of the Markov chain with transition kernel $p^{(\infty)}$ is given by the sequence $\{(B_n^{(\infty)},H_n^{(\infty)})\}_n$ defined by the almost sure induction equality
\begin{equation}\label{eq:BnHnrec}
(B_{n+1}^{(\infty)},H_{n+1}^{(\infty)})=\Bigg(\beta_n B_n^{(\infty)},\frac{(B_n^{(\infty)})^3H_n^{(\infty)}}{(B_n^{(\infty)})^3+\tfrac32\xi_n H_n^{(\infty)}}\Bigg)
\end{equation}
where $\{\beta_n\}_n$ and $\{\xi_n\}_n$ are two independent sequences of independent and identically distributed random variables such that $\beta_n\sim\mathrm{Beta}(2,2)$ and $\xi_n\sim\mathrm{Exp}(1)$ respectively.

\end{prop}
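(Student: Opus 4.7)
The plan is to prove the two assertions of the proposition separately, both via direct computation.

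For the convergence $p^{(\la)}\to p^{(\infty)}$, I would Taylor expand the area $|\Delta_n^{(\la)}|_2$ given by \eqref{eq:formuleairecap}, using the key identity $\arctan(y)-y/(1+y^2)=\tfrac{2}{3}y^3+O(y^5)$ as $y\to 0$. Writing $L=(\la^2 h'^2-\la^{2/3}(b^2-b'^2))^{1/2}$ and $y=\la^{1/3}b/L$, I would first observe the algebraic simplification $\la^2 h'^2+\la^{2/3}b'^2=L^2+\la^{2/3}b^2=L^2(1+y^2)$, which lets us rewrite the first bracket in \eqref{eq:formuleairecap} as $L^2(1+y^2)[\arctan(y)-y/(1+y^2)]$. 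Since $L\sim\la h'$ and $y\sim\la^{-2/3}b/h'$ as $\la\to\infty$, this term tends to $2b^3/(3h')$. The same expansion applied to the second bracket of \eqref{eq:formuleairecap} yields $2b^3/(3h)$, giving $|\Delta_n^{(\la)}|_2\to\tfrac{2}{3}b^3(1/h'-1/h)$. The polynomial prefactor in \eqref{eq:formuleexactpla} plainly converges to $4(b-b')b'/h'^2$ since $\la^{-4/3}\to 0$, and the admissible set $S_{(b,h)}$ from \eqref{eq:domainesautorisesbh} converges to $(0,b)\times(0,h)$, completing the identification of $p^{(\infty)}$.

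For the probabilistic representation, the strategy is to push forward the joint law of $(\beta_n,\xi_n)$ under the map $\Phi:(\beta,\xi)\mapsto(\beta b,\,b^3 h/(b^3+\tfrac{3}{2}\xi h))$ at fixed $(b,h)$. The inverse is $\beta=b'/b$, $\xi=\tfrac{2}{3}b^3(1/h'-1/h)$, and $\Phi$ bijectively maps $(0,1)\times(0,\infty)$ onto $(0,b)\times(0,h)$. The Jacobian of $\Phi$ is lower triangular with diagonal entries $b$ and $\partial h'/\partial\xi=-\tfrac{3}{2}h'^2/b^3$, the latter obtained using the identity $b^3+\tfrac{3}{2}\xi h=b^3 h/h'$, so its absolute determinant is $\tfrac{3}{2}h'^2/b^2$. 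Using the densities $6\beta(1-\beta)\indic_{(0,1)}(\beta)$ of $\mathrm{Beta}(2,2)$ and $e^{-\xi}\indic_{(0,\infty)}(\xi)$ of $\mathrm{Exp}(1)$, the change of variables produces the conditional density
$$\frac{6b'(b-b')}{b^2}\exp\!\Big(\!-\tfrac{2}{3}b^3(1/h'-1/h)\Big)\cdot\frac{2b^2}{3h'^2}=\frac{4(b-b')b'}{h'^2}\exp\!\Big(\!-\tfrac{2}{3}b^3(1/h'-1/h)\Big),$$
which is exactly $p^{(\infty)}((b,h),(b',h'))$.

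I do not anticipate any serious obstacle: both parts are routine calculations. The only mild point of attention is identifying the algebraic rewriting $L^2+\la^{2/3}b^2=L^2(1+y^2)$ in the first part, which isolates the leading-order contribution cleanly and avoids expanding two nearly cancelling large quantities.
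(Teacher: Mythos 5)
Your proposal is correct. For the pointwise convergence of the transition kernel, you take the same route as the paper (Taylor expansion of the cap-area formula \eqref{eq:formuleairecap}, plus trivial limits of the Jacobian factor and of $S_{(b,h)}$); your algebraic observation $\la^2 h'^2+\la^{2/3}b'^2=L^2(1+y^2)$ is a clean way to organize the expansion, and both brackets do indeed tend to $\tfrac{2}{3}b^3/h'$ and $\tfrac{2}{3}b^3/h$ respectively, giving $\lim|\Delta_n^{(\la)}|_2=\tfrac23 b^3(\tfrac1{h'}-\tfrac1{h})$ as in \eqref{eq:limdeltan}.

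For the probabilistic representation you take a mildly different route from the paper: rather than factorizing $p^{(\infty)}$ as in \eqref{eq:factorpinfty}, recognizing the $b'$-marginal as a rescaled Beta$(2,2)$, and recovering the $h'$-conditional via inversion of its cumulative distribution function, you proceed in the verification direction, namely pushing forward the product Beta$(2,2)\otimes$Exp$(1)$ density through the map $\Phi$ and computing the (lower-triangular) Jacobian explicitly. Both arguments are equally rigorous; the paper's version is derivational (it discovers the representation from $p^{(\infty)}$, which is how one would guess the map $\Phi$ in the first place), while yours is a direct change-of-variables check once $\Phi$ is given. Your Jacobian determinant $\tfrac{3}{2}h'^2/b^2$ and the resulting density both check out, and the constraints $\beta\in(0,1)$, $\xi>0$ correctly reproduce the support $(0,b)\times(0,h)$. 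No gaps.
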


\begin{proof} First we show the convergence of $p^{(\la)}$ to the limit transition $p^{(\infty)}$ given at \eqref{eq:transideale}. To do so, it is enough to get the asymptotics in \eqref{eq:domainesautorisesbh}, \eqref{eq:vraijacob} and \eqref{eq:formuleairecap}. They are straightforward in  \eqref{eq:domainesautorisesbh} and \eqref{eq:vraijacob}. As for the area of $\Delta_n^{(\la)}$, Taylor expanding \eqref{eq:formuleairecap} leads to
\begin{align}\label{eq:limdeltan}
\lim_{\la\to\infty}|\Delta_n^{(\la)}|_2=\mfrac23b^3\big(\mfrac1{h'}-\mfrac1{h}\big).
\end{align}
The result follows. Next, let us observe that the transition probability $p^{(\infty)}$ given by \eqref{eq:transideale} may be factorized as
\begin{equation}\label{eq:factorpinfty}
p^{(\infty)}((b,h),(b',h'))=\left(\mfrac{6(b-b')b'}{b^3}\indicat_{(0,b)}(b')\right)\bigg(\mfrac23\mfrac{b^3}{h'^2}\exp\big(-\mfrac23 b^3\big(\mfrac1{h'}-\mfrac1{h}\big)\big)\indicat_{(0,h)}(h')\bigg)
\end{equation}
where the first (resp. the second) expression inside brackets is the probability density function of ${B_{n+1}^{(\infty)}}$ (resp. ${H_{n+1}^{(\infty)}}$) conditional on $\big\{(B_n^{(\infty)},H_n^{(\infty)})=(b,h)\big\}$.

\pass Let us show the recursion formula satisfied by the sequence $\{B_n^{(\infty)}\}_n$. Conditional on $\{B_n^{(\infty)}=b\}$, the probability density function of ${B_{n+1}^{(\infty)}}$ is $\mfrac{6(b-b')b'}{b^3}\indicat_{(0,b)}(b')$, which in turn implies that $\frac1{b}{B_{n+1}^{(\infty)}}$ admits a probability density function given by $6t(1-t)\indic_{(0,1)}(t)$, i.e. follows a $\mathrm{Beta}(2,2)$ distribution. In other words, this proves that the sequence $\{B_n^{(\infty)}\}_n$ satisfies the recursion relation
\begin{equation}\label{eqrecbn}
{B_{n+1}^{(\infty)}}=\beta_n B_n^{(\infty)}
\end{equation}
where $\{\beta_n\}_n$ is a sequence of iid random variables with common distribution $\mathrm{Beta}(2,2)$.

\pass Next, the second factor in the right-hand side of \eqref{eq:factorpinfty} provides the probability density function of ${H_{n+1}^{(\infty)}}$ conditional on $\{(B_n^{(\infty)},H_n^{(\infty)})=(b,h)\}$. We now use the trick of calculating the associated cumulative distribution function and inverse it. More precisely, the cumulative density function $G_n(\cdot\,|\,(b,h))$ of ${H_{n+1}^{(\infty)}}$ conditional on $\{(B_n^{(\infty)},H_n^{(\infty)})=(b,h)\}$ is written
\begin{align*}
G_n(t\,|\,(b,h)) = \frac23 b^3\int_0^t \frac1{h'^2}\exp\left(-\frac23b^3\left(\frac1{h'}-\frac1{h}\right)\right) \dd h'= \exp\left(-\frac23 b^3\left(\frac1{t}-\frac1{h}\right)\right).
\end{align*}

\noi Therefore, conditional on $\{(B_n^{(\infty)},H_n^{(\infty)})=(b,h)\}$, the following holds:
\begin{equation*}
G_n({H_{n+1}^{(\infty)}}\,|\,(b,h)) = \exp\bigg(-\frac23 b^3\bigg(\frac1{{H_{n+1}^{(\infty)}}}-\frac1{h}\bigg)\bigg) \overset{(d)}{=} U
\end{equation*}
where $U$ is a variable uniformly distributed in $(0,1)$.

\noi This implies that conditional on $\{(B_n^{(\infty)},H_n^{(\infty)})=(b,h)\}$,
\begin{equation*}
-\frac23b^3\bigg(\frac1{{H_{n+1}^{(\infty)}}}-\frac1{h}\bigg) \overset{(d)}{=} \log U. 
\end{equation*}

\noi Now deconditioning the relation above, we obtain that
\begin{equation}\label{eq:relrecpratiquepourplustard}
\frac23(B_n^{(\infty)})^3\bigg(\frac1{{H_{n+1}^{(\infty)}}}-\frac1{H_n^{(\infty)}}\bigg)=\xi_n 
\end{equation}
is an exponential variable with mean one. This proves in turn that the sequence $\{H_n^{(\infty)}\}_n$ satisfies the recursive relation
\begin{equation*}\label{eqrectn}
H_{n+1}^{(\infty)}=\frac{(B_n^{(\infty)})^3 H_n^{(\infty)}}{(B_n^{(\infty)})^3+\frac32\xi_n H_n^{(\infty)}}.
\end{equation*} 
This completes the proof.
\end{proof}

\noi The next corollary is a direct consequence of the probabilistic representation of $\{B_n^{(\infty)}\}_n$ that will be used in the construction of the coupling in Section \ref{sec:couplage}.

\begin{cor}\label{cor:logBn}
The sequence $\{B_n^{(\infty)}\}_n$ decreases almost surely to $0$. More precisely, almost surely:
\begin{equation*}
-\frac1{n}\log B_n^{(\infty)}\underset{n\to\infty}{\longrightarrow}\frac56.
\end{equation*}
\end{cor}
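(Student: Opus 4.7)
The plan is to reduce the statement directly to the strong law of large numbers applied to the iid sequence $\{\log\beta_n\}_n$. By the recursion \eqref{eqrecbn} obtained in Proposition \ref{prop:egaliteloiidealisee}, namely $B_{n+1}^{(\infty)}=\beta_n B_n^{(\infty)}$ with $\beta_n\sim\mathrm{Beta}(2,2)$ independent, a telescoping argument gives
\begin{equation*}
\log B_n^{(\infty)}=\log B_0^{(\infty)}+\sum_{k=0}^{n-1}\log \beta_k.
\end{equation*}
Since $\beta_n\in(0,1)$ almost surely, the sequence $\{B_n^{(\infty)}\}_n$ is strictly decreasing; its almost-sure convergence to $0$ will follow once we know that $-\tfrac1n\log B_n^{(\infty)}$ converges to a strictly positive limit.

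First I would check the integrability required by the strong law of large numbers. The density $6t(1-t)\indicat_{(0,1)}(t)$ is bounded, so near $t=1$ the random variable $\log\beta_0$ is bounded, and near $t=0$ the product $t|\log t|$ is integrable. Hence $\E[|\log\beta_0|]<\infty$ and Kolmogorov's strong law of large numbers applies, yielding
\begin{equation*}
\frac1n\sum_{k=0}^{n-1}\log\beta_k\underset{n\to\infty}{\longrightarrow}\E[\log\beta_0]\qquad\text{almost surely.}
\end{equation*}

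The only non-trivial step is the explicit computation of $\E[\log\beta_0]$. I would do it by hand through
\begin{equation*}
\E[\log\beta_0]=6\int_0^1(t-t^2)\log t\,\dd t=6\Big(-\tfrac14+\tfrac19\Big)=-\tfrac56,
\end{equation*}
using the standard primitives $\int_0^1 t\log t\,\dd t=-\tfrac14$ and $\int_0^1 t^2\log t\,\dd t=-\tfrac19$. Equivalently, one may invoke the formula $\E[\log\beta]=\psi(\alpha)-\psi(\alpha+\gamma)$ for $\beta\sim\mathrm{Beta}(\alpha,\gamma)$, which with $\alpha=\gamma=2$ gives $\psi(2)-\psi(4)=-\tfrac12-\tfrac13=-\tfrac56$.

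Combining the two displays with the initial term $\frac1n\log B_0^{(\infty)}\to0$ almost surely (as $B_0^{(\infty)}$ is finite and positive almost surely thanks to Lemma \ref{lem:convBoHo}), we obtain
\begin{equation*}
-\frac1n\log B_n^{(\infty)}\underset{n\to\infty}{\longrightarrow}\frac56\qquad\text{almost surely,}
\end{equation*}
which in particular forces $B_n^{(\infty)}\to0$ almost surely and completes the proof. No real obstacle arises here; the single concrete computation is $\E[\log\beta_0]=-\tfrac56$, and the rest is a direct application of the strong law of large numbers to the telescoped product representation of $B_n^{(\infty)}$.
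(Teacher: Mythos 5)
Your proof is correct and follows exactly the same route as the paper: telescope the recursion $B_{n+1}^{(\infty)}=\beta_n B_n^{(\infty)}$ into a product, apply the strong law of large numbers to $\{\log\beta_k\}_k$, and compute $\E[\log\beta_0]=-\tfrac56$. The extra remarks on integrability and the digamma formula are sound but not needed beyond what the paper already does.
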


\begin{proof} Thanks to Proposition \ref{prop:egaliteloiidealisee} we get $B_n^{(\infty)}=B_0^{(\infty)}\beta_0\beta_1\cdots\beta_{n-1}$. It then remains to notice that
\begin{equation*}
\E(-\log\beta_0)=\int_0^1 6t(1-t)\log t \dd t = \frac{5}{6}
\end{equation*}
and use the law of large numbers.
\end{proof}

\noi In Proposition \ref{prop:loiTn} below we introduce an auxiliary sequence $\{T_n^{(\infty)}\}_n$ of so-called {\it shape characteristics} of the consecutive triangles associated with the Markov chain $\{(B_n^{(\infty)},H_n^{(\infty)})\}_n$. One of the assets of that sequence is that $T_n^{(\infty)}$ converges in distribution to an explicit limit law without the need of any renormalization.  

\begin{prop}\label{prop:loiTn}
The sequence $\{T_n^{(\infty)}\}_n$ defined by
\begin{equation}\label{eq:defTninfty}
T^{(\infty)}_n=\frac{(B_n^{(\infty)})^3}{H_n^{(\infty)}}
\end{equation}
satisfies the almost sure induction relation
\begin{equation}\label{eq:Tnrec}
T_{n+1}^{(\infty)}=\beta_n^3\big(T_n^{(\infty)}+\tfrac32\xi_n\big)
\end{equation}
where $\{\beta_n\}_n$ and $\{\xi_n\}_n$ are two independent sequences of independent and identically distributed random variables such that $\beta_n\sim\mathrm{Beta}(2,2)$ and $\xi_n\sim\mathrm{Exp}(1)$ respectively.
Moreover, the sequence $\{T_n^{(\infty)}\}_n$ converges in distribution to a random variable $T$ which can be expressed as
\begin{equation}\label{eq:loiTinfty}
T=\mfrac32 T_1^3 T_2 T_3
\end{equation}
where $T_1$, $T_2$ and $T_3$ are independent random variables such that $T_1\sim\mathrm{Beta}(2,2)$, $T_2\sim\mathrm{Gamma}(\frac53,1)$ and $T_3\sim\mathrm{Beta}(2,\frac23)$ respectively.
\end{prop}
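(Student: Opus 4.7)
The plan is to prove the three assertions in order: the recursion is a direct substitution, the convergence is an application of affine random recursion theory, and the identification of the limit is the most delicate part. For the almost sure recursion \eqref{eq:Tnrec}, I would simply substitute the identities of \eqref{eq:BnHnrec}:
\begin{equation*}
T_{n+1}^{(\infty)}=\frac{(B_{n+1}^{(\infty)})^{3}}{H_{n+1}^{(\infty)}}=\frac{\beta_n^{3}(B_n^{(\infty)})^{3}\bigl((B_n^{(\infty)})^{3}+\tfrac32\xi_n H_n^{(\infty)}\bigr)}{(B_n^{(\infty)})^{3}H_n^{(\infty)}}=\beta_n^{3}\bigl(T_n^{(\infty)}+\tfrac32\xi_n\bigr),
\end{equation*}
which gives \eqref{eq:Tnrec} immediately.

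For the convergence in distribution, I would view Step~1 as the affine random recursion $T_{n+1}^{(\infty)}=A_n T_n^{(\infty)}+C_n$ with iid pairs $(A_n,C_n)=(\beta_n^{3},\tfrac32\beta_n^{3}\xi_n)$. Since $\beta_0\in(0,1)$ almost surely and $\E[\log\beta_0]=-\tfrac56$ by Corollary \ref{cor:logBn}, one has $\E[\log A_0]=-\tfrac52<0$; moreover $C_0\le\tfrac32\xi_0$ guarantees $\E[\log^{+}C_0]<\infty$. Classical results on iterated random functions (e.g.\ Vervaat, Goldie) then provide a unique stationary law, so $T_n^{(\infty)}$ converges in distribution to a random variable $T$ satisfying the distributional fixed point $T\overset{(d)}{=}\beta^{3}(T+\tfrac32\xi)$ with $T$ independent of $(\beta,\xi)$. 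Reversing the order of the iid inputs yields the series representation
\begin{equation*}
T\overset{(d)}{=}\mfrac32\sum_{n=0}^{\infty}\xi_n\prod_{k=0}^{n}\beta_k^{3}=\mfrac32\,\beta_0^{\,3}\,S,\qquad S:=\sum_{n=0}^{\infty}\xi_n\prod_{k=1}^{n}\beta_k^{3},
\end{equation*}
so that $T\overset{(d)}{=}\tfrac32\,T_1^{3}\,S$ with $T_1:=\beta_0\sim\mathrm{Beta}(2,2)$ independent of $S$.

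It then remains to identify $S\overset{(d)}{=}T_2T_3$ where $T_2\sim\mathrm{Gamma}(5/3,1)$ and $T_3\sim\mathrm{Beta}(2,2/3)$ are independent. The plan is to verify that the candidate $T_2T_3$ satisfies the same distributional fixed point as $S$, namely $X\overset{(d)}{=}\xi+\beta^{3}X'$, and then invoke the uniqueness of the stationary law provided by Step~2. A clean route goes through the Laplace transform: one computes
\begin{equation*}
\mathcal{L}_{T_2T_3}(u)=\E\bigl[(1+uT_3)^{-5/3}\bigr]
\end{equation*}
by integrating out the Gamma factor, and then checks that this function solves the functional equation $\mathcal{L}(u)=\tfrac{1}{1+u}\,\E_{\beta}[\mathcal{L}(u\beta^{3})]$ satisfied by $\mathcal{L}_S$. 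Equivalently, one matches Mellin transforms: the Gamma-Beta algebra directly gives $\E[(T_2T_3)^{s}]=(5/3)\,\Gamma(5/3+s)\,\Gamma(2+s)/\Gamma(8/3+s)$, which is then to be compared with the Mellin transform of $S$ obtained from the series representation.

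The main obstacle will be this last identification. The product $T_2T_3$ is not an instance of the classical $\mathrm{Gamma}(a+b)\cdot\mathrm{Beta}(a,b)\overset{(d)}{=}\mathrm{Gamma}(a)$ identity, because the parameters $5/3$ and $(2,2/3)$ do not align in the required way. Consequently, verifying the fixed-point equation for $T_2T_3$ will demand either a dedicated hypergeometric computation on the Laplace side (the candidate Laplace transform being essentially a $_2F_1$) or a bespoke probabilistic rearrangement of the infinite sum $\sum_n\xi_n\prod_{k=1}^n\beta_k^{3}$ into an independent product of a Gamma and a Beta variable.
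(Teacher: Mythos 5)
Your steps 1 and 2 are correct and essentially match the paper. The algebraic derivation of \eqref{eq:Tnrec} is identical. For the convergence you frame it as an affine random recursion $T_{n+1}^{(\infty)}=A_nT_n^{(\infty)}+C_n$ and invoke the Vervaat/Goldie contraction conditions; the paper instead invokes Letac's principle and checks convergence of the reversed series by the Three-series Theorem, then uses Vervaat for uniqueness of the fixed point $T\overset{(d)}{=}\beta^3(T+\tfrac32\xi)$. Those are interchangeable routes to the same series representation. Your observation that the first factor $\beta_0^3$ can be pulled out of the series — giving $T\overset{(d)}{=}\tfrac32\,T_1^3\,S$ with $T_1=\beta_0$ independent of $S$, so that only $S\overset{(d)}{=}T_2T_3$ remains to be shown — is a genuinely nice structural remark that the paper does not make; the paper identifies all three factors simultaneously.

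The gap is exactly where you say it is: the identification $S\overset{(d)}{=}T_2T_3$ (equivalently, the law of $T$) is never established. You set up both candidate routes — the hypergeometric/Laplace verification and the Mellin comparison — but carry neither one through, and without it the proposition is not proved. You also never argue that the moments you propose to match actually determine the distribution, which is needed since the law of $T$ has unbounded support. The paper closes this gap as follows. Starting from the fixed-point equation $T\overset{(d)}{=}\beta^3(T+\tfrac32\xi)$, it sets $c=\tfrac32$, $t_k=\E[T^k]/(c^kk!)$ and $q_k=\E[\beta^k]=\frac{6}{(k+2)(k+3)}$, and the Binomial Theorem plus independence give the convolution relation $t_k=q_{3k}\sum_{i=0}^kt_i$. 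Isolating $t_k$ and telescoping yields the closed form
\begin{equation*}
\E[T^k]=c^k\,q_{3k}\,\prod_{j=0}^{k-1}\big(\tfrac53+j\big)\,\prod_{j=0}^{k-1}\frac{2+j}{\tfrac83+j},
\end{equation*}
which, using the standard Gamma and Beta moment formulas, is recognized as $c^k\,\E[T_1^{3k}]\,\E[T_2^k]\,\E[T_3^k]$ with $T_1\sim\mathrm{Beta}(2,2)$, $T_2\sim\mathrm{Gamma}(\tfrac53,1)$, $T_3\sim\mathrm{Beta}(2,\tfrac23)$. Finally, Carleman's condition is checked (bounding the even moments by those of $T_2$) to ensure these moments determine the law. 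If you want to complete your version, the same moment recursion applied to $S$ directly (whose fixed-point is $S\overset{(d)}{=}\xi+\beta^3 S'$) produces the Mellin transform of $S$ at integer points, and you will still need a Carleman or Hausdorff/Stieltjes moment argument before you may conclude $S\overset{(d)}{=}T_2T_3$.
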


\begin{proof} The fact that $\{T^{(\infty)}_n\}_n$ satisfies the recursive relation \eqref{eq:Tnrec} is straightforward thanks to \eqref{eqrecbn} and \eqref{eq:relrecpratiquepourplustard}. Let us first show that the sequence $\{T^{(\infty)}_n\}_n$ converges in distribution. We use Letac's principle in \cite{Let}, i.e. we introduce the sequence $\{T_n'\}_n$ such that $T_0'=T^{(\infty)}_0$ and 
\begin{equation*}
T_n'=\beta_1^3\big(\beta_2^3\big(\cdots \big(\beta_{n-2}^3\big(\beta_{n-1}^3\big(T_0'+\tfrac32\xi_{n-1}\big)+\tfrac32\xi_{n-2}\big)+\cdots\big)+\tfrac32\xi_1\big).
\end{equation*}

\noi We get that $\{T_n'\}_n$ converges almost surely to $\frac32\sum_{n=1}^{\infty}\beta_1^3\cdots \beta_n^3\xi_n$ which is itself convergent by a direct use of the Three-series Theorem. Since $T_n^{(\infty)}$ and $T_n'$ have the same distribution, the sequence $\{T_n^{(\infty)}\}_n$ converges in distribution to a certain random variable $T$.

\pass Let us now identify the distribution of $T$. We get from \eqref{eq:Tnrec} that 
\begin{equation}\label{eq:equationfonc}
T\overset{(d)}{=} \beta^3 \big(T + \tfrac32 \xi \big)    
\end{equation}
with $\beta\sim\mathrm{Beta}(2,2)$, $\xi\sim \mathrm{Exp}(1)$ and $\beta$, $\xi$, $T$ independent. Thanks to \cite[Th. 1.5 (i)]{Ver79}, the solution of \eqref{eq:equationfonc} is unique.

\pass Determination of solutions of the random difference equation \eqref{eq:Tnrec} may be done in the same spirit as the resolution of some Beta-Gamma convolutions studied in \cite{DDu}. More precisely we are looking for a solution of \eqref{eq:equationfonc} which has finite moments of any order and we intend to calculate explicitly these moments. To do so, let us set $c=\frac32$ and for every $k\ge 0$, let us introduce  
\begin{equation*}
t_k=\frac{1}{c^k}\frac{1}{k!}\E[T^k].
\end{equation*}

\pass We recall two general facts that will be used again at the end of the proof: if $X\sim\mathrm{Gamma}(d,1)$ and $Y\sim\mathrm{Beta}(a,b)$ then for any $k\geq0$,
\begin{equation}\label{eq:mombetagamma}
\E[X^k] = \prod_{j=0}^{k-1}(d+j) \quad\text{and}\quad \E[Y^k]= \prod_{j=0}^{k-1}\frac{a+j}{a+b+j}.
\end{equation}

\noi In particular, we get
\begin{equation*}
\E[\xi^k]= k! \quad\text{and}\quad \E[\beta^k] = \frac{6}{(k+2)(k+3)}.
\end{equation*}

\noi Setting $q_k=\E[\beta^k]$ and using subsequently \eqref{eq:equationfonc}, the Binomial Theorem and the independence between $\beta$, $\xi$ and $T$, we obtain
\begin{align*}
t_k = \frac1{c^k}\frac1{k!}\E\big[(\beta^3(T+c\xi))^k\big]
 = \E\big[\beta^{3k}\big]\sum_{i=0}^k \frac1{c^k}\frac1{k!} \binom{k}{i} \E\big[T^i\big] c^{k-i}\E\big[\xi^{k-i}\big]
 = q_{3k}\sum_{i=0}^k t_i .
\end{align*}

\noi By isolating $t_k$, we deduce that
\begin{equation*}
t_k = \bigg(\frac1{1-q_{3k}}\bigg)\frac{q_{3k}}{q_{3(k-1)}}t_{k-1}.
\end{equation*}

\noi A telescopic product yields
\begin{equation*}
t_k = q_{3k}\bigg(\prod_{i=1}^k \frac1{1-q_{3i}}\bigg).
\end{equation*}

\noi Observing that 
$1-q_k= \mfrac{k(k+5)}{(k+2)(k+3)}$,
we get
\begin{align*}
\prod_{i=1}^k \frac1{1-q_{3i}} 
= \prod_{i=1}^k \frac{\left(\mfrac{2}{3}+i\right)\left(1+i\right)}{i\left(\mfrac{5}{3}+i\right)} = \frac1{k!}\bigg(\prod_{j=0}^{k-1}\left(\mfrac{5}{3}+j\right)\bigg)\bigg(\prod_{j=0}^{k-1}\frac{(2+j)}{\left(\mfrac{8}{3}+j\right)}\bigg).
\end{align*}

\noi It follows that, for any $k\geq0$,
\begin{equation}\label{eq:momT}
\E(T^k) = c^k k! t_k = c^k q_{3k}\bigg(\prod_{j=0}^{k-1}\left(\mfrac{5}{3}+j\right)\bigg)\bigg(\prod_{j=0}^{k-1}\frac{\left(2+j\right)}{\left(2+\mfrac{2}{3}+j\right)}\bigg).
\end{equation}

\noi Hence using \eqref{eq:mombetagamma} again, we can identify the product of the $k$-th moments of three random variables as follows:
\begin{equation*}
\E[T^k]= c^k\E[T_1^{3k}]\E[T_2^k]\E[T_3^k]
\end{equation*}
with $T_1\sim \mathrm{Beta}(2,2)$, $T_2\sim\mathrm{Gamma}\big(\tfrac{5}{3},1\big)$ and $T_3\sim\mathrm{Beta}\big(2,\tfrac{2}{3}\big)$.

\pass Upperbounding the $2k$-th moment of $T_1^3T_2T_3$ by $\E[T_2^{2k}]$ which is itself bounded by $(2k+1)!$, we verify that Carleman's condition is satisfied. This implies that the moments calculated at \eqref{eq:momT} characterize the distribution of $T$, i.e. $T\overset{(d)}{=}\tfrac32 T_1^3T_2T_3$.
\end{proof}

\subsection{Coupling of the Markov chains}\label{sec:couplage}

\pass 

\noi We prepare below the coupling of the Markov chains $\{(B_n^{(\la)},H_n^{(\la)})\}_n$ and $\{(B_n^{(\infty)},H_n^{(\infty)})\}_n$. First, we identify a good set $E^{(\la)}(\eps_1)$ for the starting position of the two Markov chains and also a good set $E^{(\la)}_{(b,h)}(\eps_1,\eps_2)$ which is both stable, see \eqref{eq:stabilityEbh}, and suitable for the coupling of the two Markov chains, see \eqref{eq:resultatamontrer}.

\pass More precisely, we introduce the set
\begin{equation}\label{eq:defEeps}
E^{(\la)}(\eps_1) = \big\{(b,h)\in (0,\infty)^2: b^3\le \la^{\eps_1}h \text{ and } b\le \la^{\eps_1}\big\}
\end{equation}
where the exponent $\varepsilon_1>0$ will be determined later.

\pass Similarly, for $(b,h)\in (0,\infty)^2$, we consider the set
\begin{equation}\label{eq:defEeps1ep2}
E^{(\la)}_{(b,h)}(\eps_1,\eps_2) =\Big\{(b',h')\in (0,\infty)^2 : b'^3\le \la^{\eps_1}h' \text{ and } b'\ge \la^{-\eps_2}b \Big\}
\end{equation}
where the exponents $\eps_1,\eps_2>0$ will be determined later.

\begin{prop}\label{prop3}
For every $\eps_0>0$ and $\eps_1,\eps_2>0$ small enough with respect to $\eps_0$, 
we obtain uniformly for every $b\ge \la^{\frac{\eps_0-1}{3}}$ and $(b,h)\in E^{(\la)}(\eps_1)$ the two following limits :
\begin{align}
& (i) \quad \lim_{\la\to\infty}(\log\la) \int_{\R^2\setminus E^{(\la)}_{(b,h)}(\eps_1,\eps_2)}p^{(\infty)}((b,h),(b',h'))\dd b'\dd h'=0.  \label{eq:stabilityEbh}\\ 
& (ii) \quad \lim_{\la\to\infty}(\log\la) \int_{E_{(b,h)}^{(\la)}(\eps_1,\eps_2)}|p^{(\la)}((b,h),(b',h'))-p^{(\infty)}((b,h),(b',h'))|\dd b'\dd h'=0. \label{eq:resultatamontrer}
\end{align}
\end{prop}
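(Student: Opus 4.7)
\medskip\noindent
\textbf{Proof plan.} My approach treats the two statements by complementary methods: for (i), I would use the probabilistic representation of Proposition~\ref{prop:egaliteloiidealisee} to estimate the $p^{(\infty)}$-mass of the complement of $E^{(\la)}_{(b,h)}(\eps_1,\eps_2)$; for (ii), I would Taylor expand the explicit formula \eqref{eq:formuleexactpla} for $p^{(\la)}$ around $p^{(\infty)}$ on the good set.

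For (i), I would rewrite each defining constraint of the complement of $E^{(\la)}_{(b,h)}(\eps_1,\eps_2)$ as an event on the independent pair $(\beta,\xi)$ produced by \eqref{eq:BnHnrec} and \eqref{eq:relrecpratiquepourplustard}. The condition $\{b'<\la^{-\eps_2}b\}$ becomes $\{\beta<\la^{-\eps_2}\}$, whose probability is $O(\la^{-2\eps_2})$ from the vanishing of the $\mathrm{Beta}(2,2)$ density at $0$. The condition $\{b'^3>\la^{\eps_1}h'\}$ rewrites as $\{\beta^3(b^3/h+\tfrac32\xi)>\la^{\eps_1}\}$; using $b^3/h\le\la^{\eps_1}$ on $E^{(\la)}(\eps_1)$ to absorb the deterministic part and integrating out $\xi$, the estimation reduces to bounding $\E[\exp(-\tfrac{2}{3}\la^{\eps_1}(1-\beta^3)/\beta^3)]$, which is $O(\la^{-2\eps_1})$ via a short splitting of $\beta$ near $0$ (exponentially small contribution) versus $\beta$ near $1$ (a Laplace-type contribution). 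Both bounds, multiplied by $\log\la$, still tend to $0$.

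For (ii), I would factor $p^{(\la)} = J^{(\la)}\exp(-|\Delta_n^{(\la)}|_2)\indic_{S_{(b,h)}}$ and the analogous product for $p^{(\infty)}$, then compare the three factors. The crucial preparatory step is to verify that there exists $\alpha>0$, provided $\eps_1,\eps_2$ are chosen small enough with respect to $\eps_0$, such that on $E^{(\la)}_{(b,h)}(\eps_1,\eps_2)$ under $b\ge\la^{(\eps_0-1)/3}$, all the small parameters entering the Taylor expansions of \eqref{eq:vraijacob} and \eqref{eq:formuleairecap}, namely $\la^{-2/3}b/h$, $\la^{-2/3}b/h'$ and $\la^{-4/3}(b^2-b'^2)/h'^2$, are uniformly $O(\la^{-\alpha})$. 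This is elementary arithmetic starting from the five inequalities defining the good set. Plugging these bounds into the cancellation $\arctan(u)-u/(1+u^2) = \tfrac{2}{3}u^3 + O(u^5)$ that produced \eqref{eq:limdeltan}, together with the analogous expansion of the Jacobian from \eqref{eq:vraijacob}, should yield a multiplicative control $|p^{(\la)} - p^{(\infty)}|\le O(\la^{-\alpha'})\cdot p^{(\infty)}$ on the intersection of $E^{(\la)}_{(b,h)}(\eps_1,\eps_2)$ with $S_{(b,h)}\cap((0,b)\times(0,h))$, for some possibly smaller $\alpha'>0$. The thin ribbon $S_{(b,h)}\triangle((0,b)\times(0,h))$ contributes a further $O(\la^{-\alpha'})$ term by a direct estimation of its endpoints via \eqref{eq:domainesautorisesbh}. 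Summing and using that $p^{(\infty)}$ is a probability density gives the desired $O(\la^{-\alpha'})$ bound on the $L^1$ difference.

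Main obstacle: the bookkeeping of the several small parameters in \eqref{eq:formuleairecap} is delicate because its first term carries a perturbation $\la^{-4/3}(b^2-b'^2)/h'^2$ inside the square root that threatens to spoil the cancellation between $\arctan(u)$ and $u/(1+u^2)$ producing the $\tfrac{2}{3}u^3$ term. The hard part will be to verify that this cancellation survives the double expansion, so that the first term in \eqref{eq:formuleairecap} still equals $\tfrac23 b^3/h'$ up to a relative error of size at most $\la^{-\alpha'}$ capable of absorbing the multiplicative factor $\log\la$ in the statement.
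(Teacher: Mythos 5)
Your plan is correct and in a couple of places departs usefully from what the paper actually does. For (i), you run the estimate through the probabilistic representation of Proposition~\ref{prop:egaliteloiidealisee}: the event $\{b'<\la^{-\eps_2}b\}$ becomes $\{\beta<\la^{-\eps_2}\}$, and the event $\{b'^3>\la^{\eps_1}h'\}$ becomes $\{\beta^3(b^3/h+\tfrac32\xi)>\la^{\eps_1}\}$, so that integrating out $\xi$ and using $b^3/h\le\la^{\eps_1}$ reduces the task to showing $\E[\exp(-\tfrac23\la^{\eps_1}(1-\beta^3)/\beta^3)]=O(\la^{-2\eps_1})$, which is correct (the $\mathrm{Beta}(2,2)$ density vanishes linearly at $\beta=1$, so a Laplace estimate gives the exponent $-2\eps_1$). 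The paper instead integrates the kernel $p^{(\infty)}$ directly and splits the region $\{b'^3>\la^{\eps_1}h'\}$ into $b'<(1-\la^{-\eta})b$ (exponentially small after the $h'$-integration) and the thin shell $b'>(1-\la^{-\eta})b$ (of measure $3\la^{-2\eta}$), landing on $O(\la^{-2\eta})$ for any $\eta<\eps_1$; your bound $O(\la^{-2\eps_1})$ is the sharp version of the same thing and is simply cleaner. For (ii), your decomposition into the overlap of $E^{(\la)}_{(b,h)}(\eps_1,\eps_2)$ with $(0,b)\times(0,h)$ plus the thin ribbon near $h'=h$ is exactly the paper's, and the three small parameters you list are the ones the paper controls via the chain (C0)--(C2). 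The one genuine difference is the shape of your pointwise estimate: you aim for a multiplicative bound $|p^{(\la)}-p^{(\infty)}|\le O(\la^{-\alpha'})\,p^{(\infty)}$, whereas the paper establishes the additive bound $|p^{(\la)}-p^{(\infty)}|\le c(b'/h')^2\la^{-\delta}$ (eq.\ \eqref{eq:leldorado}) and then integrates it over the good set, producing an extra $\log\la$ that is absorbed by requiring $\delta>\eps_1$. Your multiplicative form is stronger and, on checking, does hold on the overlap: writing the Jacobian ratio as $\big(1+\la^{-4/3}b'^2/h'^2\big)^{-1}\cdot\big[b(1-s)^{-1/2}-b'\big]/(b-b')$ with $s=\la^{-4/3}(b^2-b'^2)/h'^2$, the singular factor $(b-b')^{-1}$ cancels against the $(b-b')$ inside $s$, so both factors and the exponential factor are $1+O(\la^{-\alpha'})$ uniformly under (C0)--(C2); integrating against the probability density $p^{(\infty)}$ then gives $O(\la^{-\alpha'})$ with no additional $\log\la$ on that piece. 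Note, however, that the ribbon term (where $p^{(\infty)}=0$) cannot be controlled multiplicatively and still requires the paper's additive estimate $p^{(\la)}\le c\la^{4/3-\delta}$ together with the $O(\la^{-4/3}b^2/h)$ thickness bound, ending up again with $O(\la^{-\delta+\eps_1})$; so you cannot fully bypass the constraint $\delta>\eps_1$, you only avoid it on the overlap. You correctly flag that the hard part is verifying that the cubic cancellation in $\arctan(u)-u/(1+u^2)$ survives the square-root perturbation inside \eqref{eq:formuleairecap}; this is precisely what the paper's estimate \eqref{eq:alambdatermine} makes precise.
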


\begin{proof}
\noi $(i)$ Thanks to \eqref{eq:defEeps1ep2} and the fact that the support of $p^{(\infty)}((b,h),\cdot)$ is $(0,b)\times(0,h)$, we obtain that
\begin{align}\label{eq:intrecoupee}
\hspace*{-.5cm}\int_{\R^2\setminus E^{(\la)}_{(b,h)}(\eps_1,\eps_2)}p^{(\infty)}((b,h),(b',h'))\dd b'\dd h'&\le I_1+I_2
\end{align}
where
\begin{equation*}
I_1=\int_0^b\hspace*{-.2cm}\int_0^{h\wedge(\la^{-\eps_1}b'^3)}
\hspace*{-.1cm}p^{(\infty)}((b,h),(b',h'))\dd b'\dd h'\hspace*{.3cm}\text{and}\hspace*{.3cm} I_2=\int_0^{\la^{-\eps_2}b}\hspace*{-.2cm}\int_0^h
p^{(\infty)}((b,h),(b',h'))\dd b'\dd h'.
\end{equation*}

\noi Let us bound $I_1$. To do so, we take $\eta>0$ which will be fixed later and write
\begin{equation}\label{eq:intla4decomp}
I_1 \le J_1+J_2
\end{equation}
where
\begin{equation*}\label{eq:intla4decompJ1}
J_1 =\int_0^{(1-\la^{-\eta})b}\int_0^{\la^{-\eps_1}b'^3} p^{(\infty)}((b,h),(b',h'))\dd b'\dd h'
\end{equation*}
and
\begin{equation*}\label{eq:intla4decompJ2}
J_2= \int_{(1-\la^{-\eta})b}^b \int_0^h  p^{(\infty)}((b,h),(b',h'))\dd b'\dd h'.
\end{equation*}

\noi Let us deal with $J_1$ first. For any $(b,h)$ such that $\frac{b^3}{h}\le \la^{\eps_1}$ we get
\begin{align}
J_1
& = \int_0^{(1-\la^{-\eta})b} 4(b-b')b'\bigg(\int_0^{\la^{-\eps_1}b'^3}
\frac1{h'^2}\exp\bigg(-\frac23 b^3\bigg(\frac1{h'}-\frac1{h}\bigg)\bigg)\dd h'\bigg) \dd b' \nonumber\\
& = \int_0^{(1-\la^{-\eta})b}\frac{6(b-b')b'}{b^3} \bigg[\exp\bigg(-\frac23b^3\bigg(\frac1{h'}-\frac1{h}\bigg)\bigg)\bigg]_0^{\la^{-\eps_1}b'^3}\dd b' \nonumber\\
& = \exp\bigg(\frac23\frac{b^3}{h}\bigg)\int_0^{(1-\la^{-\eta})b}\frac{6(b-b')b'}{b^3}\exp\bigg(-\frac23\la^{\eps_1}\frac{b^3}{b'^3}\bigg)\dd b' \nonumber\\
& \le \exp\bigg(\frac23\la^{\eps_1} (1-(1-\la^{-\eta})^{-3})\bigg)\int_0^b\frac{6(b-b')b'}{b^3}\dd b' \nonumber\\
& \underset{\la\to\infty}{=}\exp\big(-2\la^{\eps_1-\eta}+\mathrm{o}(\la^{\eps_1-\eta})\big).\label{eq:herediteeps4}
\end{align}

\noi Therefore, choosing $\eta\in (0,\eps_1)$ implies that the quantity above goes to zero faster than any negative power of $\log\la$ when $\la\longrightarrow\infty$.

\pass We now turn our attention to $J_2$. For any $b,h>0$, we obtain
\begin{align}
J_2
& = \int_{(1-\la^{-\eta})b}^b \frac{6(b-b')b'}{b^3}\bigg[\exp\bigg(-\frac23b^3\bigg(\frac1{h'}-\frac1{h}\bigg)\bigg)\bigg]_0^h \dd b'\nonumber\\
& = \int_{(1-\la^{-\eta})b}^b \frac{6(b-b')b'}{b^3} \dd b'\nonumber\\
& = 1-3(1-\la^{-\eta})^2 + 2\la^{-\eta}(1-\la^{-\eta})^3 \nonumber\\
& = 3\la^{-2\eta}+\mathrm{o}(\la^{-2\eta}).\label{eq:herediteeps1}
\end{align}

\noi Again, the quantity above is negligible in front of any negative power of $\log\la$. Consequently, combining \eqref{eq:herediteeps4}, \eqref{eq:herediteeps1} and \eqref{eq:intla4decomp}, we obtain for every $(b,h)$ such that $b^3\le \la^{\eps_1}h$,
\begin{equation}\label{eq:condla4proved}
\lim_{\la\to\infty}(\log\la) I_1 = 0.
\end{equation}

\noi We now consider $I_2$. We proceed in the same way as in \eqref{eq:herediteeps1} and obtain
\begin{align*}
I_2
& = \int_0^{\la^{-\eps_2}b}\frac{6(b-b')b'}{b^3} \dd b'\le 3\la^{-2\eps_2}.\label{eq:herediteeps2}
\end{align*}

\noi This implies that
\begin{equation}\label{eq:condla2proved}
\lim_{\la\to\infty}(\log\la) I_2 = 0.
\end{equation}

\noi We complete the proof of \eqref{eq:stabilityEbh} by combining \eqref{eq:condla4proved}, \eqref{eq:condla2proved} and \eqref{eq:intrecoupee}.

\pass $(ii)$ Let us recall the definition of the set $S_{(b.h)}$ given at \eqref{eq:domainesautorisesbh}. We first prove that for $\eps_0$, $\eps_1$ and $\eps_2$ suitably chosen, there exists $\delta>0$ which is a positive linear combination of $\eps_0$, $\eps_1$ and $\eps_2$ such that the two following inequalities are satisfied: for every $b\ge \la^{\frac{\eps_0-1}{3}}$, $(b,h)\in E^{(\la)}(\eps_1)$ and $(b',h')\in E^{(\la)}_{(b,h)}(\eps_1,\eps_2)\cap [S_{(b,h)}\setminus (0,b)\times(0,h)]$,
\begin{equation}\label{eq:leldorado0}
p^{(\la)}((b,h),(b',h')) \le c \la^{\frac43}\la^{-\delta},
\end{equation}
and for every $b\ge \la^{\frac{\varepsilon_0-1}{3}}$, $(b,h)\in E^{(\la)}(\eps_1)$  and $(b',h')\in E^{(\la)}_{(b,h)}(\eps_1,\eps_2)\cap (0,b)\times(0,h)$,
\begin{equation}\label{eq:leldorado}
|p^{(\la)}((b,h),(b',h'))-p^{(\infty)}((b,h),(b',h'))|\le c \big(\mfrac{b'}{h'}\big)^2\la^{-\delta}
\end{equation}
where here and elsewhere, $c$ denotes a generic positive constant whose value may change at each occurence.

\noi In order to prove \eqref{eq:leldorado0} and \eqref{eq:leldorado}, we start by labelling each of the satisfied inequalities in the following way:
\begin{equation*}
\mbox{(C0)}\;\; b\ge \la^{\frac{\eps_0-1}{3}}\,\,;\,\,\mbox{(C1)}\;\; b^3\le \la^{\eps_1}h\;\mbox{ and }\; b'^3\le \la^{\eps_1}h'\,\,;\,\,\mbox{(C2)}\;\; b'\ge \la^{-\eps_2}b.
\end{equation*}

\noi We start with the case when $(b',h')\in S_{(b,h)}\setminus (0,b)\times(0,h)$. 
\noi In particular, $p^{(\infty)}((b,h),(b',h'))=0$ and in view of \eqref{eq:formuleexactpla},
\begin{align}\label{eq:case1h'grand}
0\le p^{(\la)}((b,h),(b',h')) & \le \frac{4bb'}{h'^2}\bigg(1-\la^{-\frac43}\frac{b^2}{h'^2}\bigg)^{-\frac12}.
\end{align}

\noi Using consecutively (C2), (C1) and (C0), we get
\begin{equation}\label{eq:1ereutilisationCi}
\la^{-\frac43}\frac{b^2}{h'^2}\le \la^{-\frac43+6\eps_2}\frac{b'^6}{b^4h'^2}\le \la^{-\frac43+2\eps_1+6\eps_2}\frac{1}{b^4}\le \la^{-(\frac43 \eps_0-2\eps_1-6\eps_2)}.
\end{equation}

\noi In particular, when $\eps_0>\frac32 \eps_1+\frac92\eps_2$, we obtain that for $\la$ large enough,
\begin{equation}\label{eq:terme1cas1}
0\le \bigg(1-\la^{-\frac43}\frac{b^2}{h'^2}\bigg)^{-\frac12}\le c.
\end{equation}

\noi Moreover, with the same consecutive use of (C2), (C1) and (C0), we get
\begin{equation}\label{eq:terme0cas1}
\frac{bb'}{h'^2}\le \la^{5\eps_2}\frac{b'^6}{b^4h'^2}\le \la^{\frac43}\la^{-(\frac43 \eps_0-2\eps_1-5\eps_2)}.
\end{equation}

\noi Inserting \eqref{eq:terme1cas1} and \eqref{eq:terme0cas1} into \eqref{eq:case1h'grand}, we obtain \eqref{eq:leldorado0} with the provisional choice $\delta=\frac43 \eps_0-2\eps_1-5\eps_2>0$ as soon as $\eps_0>\frac32 \eps_1+\frac{15}{4}\eps_2$.

\pass We now deal with the case when $(b', h')\in (0,b)\times(0,h).$ We rewrite $p^{(\la)}$ given at \eqref{eq:formuleexactpla} (resp. $p^{(\infty)}$ given at \eqref{eq:transideale}) as $q^{(\la)}e^{-a^{(\la)}}$ (resp. $q^{(\infty)}e^{-a^{(\infty)}}$) where $a^{(\la)}=|\Delta_n^{(\la)}|_2$ (resp. $a^{(\infty)}=\lim_{\la\to\infty}|\Delta_n^{(\la)}|_2$). We omit the arguments of each of these functions for sake of simplicity. We then obtain
\begin{align}
|p^{(\la)}-p^{(\infty)}| 
& \le e^{-a^{(\la)}}|q^{(\la)}-q^{(\infty)}|+q^{(\infty)}|e^{-a^{(\la)}}-e^{-a^{(\infty)}}| \nonumber \\
& \le |q^{(\la)}-q^{(\infty)}|+\la^{\frac43}\la^{-(\frac43 \eps_0-2\eps_1-5\eps_2)}|a^{(\la)}-a^{(\infty)}|  \label{eq:cas21ereineg}
\end{align}
where we have used \eqref{eq:terme0cas1} to bound $q^{(\infty)}$.

\pass We start by bounding the first term in the right-hand side of \eqref{eq:cas21ereineg}.
\begin{align}\label{eq:cas21ertermerhs}
|q^{(\la)}-q^{(\infty)}|
& = \frac{4b'}{h'^2}\bigg|b\big(1-\big(1+\la^{-\frac43}\mfrac{b'^2}{h'^2})^{-1}\big(1-\la^{-\frac43}\mfrac{b^2-b'^2}{h'^2}\big)^{-\frac12}\big)
-b'\big(1-\big(1+\la^{-\frac43}\mfrac{b'^2}{h'^2}\big)^{-1}\big)\bigg| \nonumber \\
& \le\frac{4b'}{h'^2}\left(b\frac{\big|(1+\la^{-\frac43}\frac{b'^2}{h'^2})
(1-\la^{-\frac43}\frac{b^2-b'^2}{h'^2})^{\frac12}-1\big|}{(1+\la^{-\frac43}\frac{b'^2}{h'^2})(1-\la^{-\frac43}\frac{b^2-b'^2}{h'^2})^{\frac12}}
+ b'\frac{\la^{-\frac43}\frac{b'^2}{h'^2}}{1+\la^{-\frac43}\frac{b'^2}{h'^2}}\right).
\end{align}

\noi Thanks to \eqref{eq:1ereutilisationCi}, we obtain that
\begin{equation*}
\Big|(1+\la^{-\frac43}\mfrac{b'^2}{h'^2})(1-\la^{-\frac43}\mfrac{b^2-b'^2}{h'^2})^{\frac12}-1\Big|\le c\la^{-\frac43}\mfrac{b^2}{h'^2}\le c\la^{-(\frac43 \eps_0-2\eps_1-6\eps_2)}.
\end{equation*}

\noi Consequently, we deduce that
\begin{equation}\label{eq:1erterme1ertermerhscas2}
b\frac{\big|(1+\la^{-\frac43}\frac{b'^2}{h'^2})(1-\la^{-\frac43}\frac{b^2-b'^2}{h'^2})^{\frac12}-1\big|}{(1+\la^{-\frac43}\frac{b'^2}{h'^2})(1-\la^{-\frac43}\frac{b^2-b'^2}{h'^2})^{\frac12}}\le cb\la^{-(\frac43 \eps_0-2\eps_1-6\eps_2)}\le c b'\la^{-(\frac43 \eps_0-2\eps_1-7\eps_2)}.
\end{equation}

\noi In the same way,
\begin{equation}\label{eq:2emeterme1ertermerhscas2}
b'\frac{\la^{-\frac43}\frac{b'^2}{h'^2}}{1+\la^{-\frac43}\frac{b'^2}{h'^2}}\le cb'\la^{-(\frac43 \eps_0-2\eps_1-7\eps_2)}.
\end{equation}

\noi Inserting \eqref{eq:1erterme1ertermerhscas2} and \eqref{eq:2emeterme1ertermerhscas2} into \eqref{eq:cas21ertermerhs}, we deduce that
\begin{equation}\label{eq:1ertermecas2regle}
|q^{(\la)}-q^{(\infty)}|\le c\bigg(\frac{b'^2}{h'^2}\bigg)\la^{-(\frac43 \eps_0-2\eps_1-7\eps_2)}.
\end{equation}
                    
\noi We now consider the second term in the right-hand side of \eqref{eq:cas21ereineg}, i.e. $|a^{(\la)}-a^{(\infty)}|$. We start by rewriting $a^{(\la)}=|\Delta_n^{(\la)}|_2$ with the intermediary function $\xi(x)=\arctan(x)-x$:

\begin{align}\label{eq:rewritinga}
a^{(\la)} 
= \la^2 h'^2 & \big(1+\la^{-\frac43}\mfrac{b'^2}{h'^2}\big)\xi\bigg(\frac{\la^{-\frac23}\frac{b}{h'}}{(1-\la^{-\frac43}\frac{b^2-b'^2}{h'^2})^{\frac12}}\bigg) + \frac{\frac{b^3}{h'}}{(1-\la^{-\frac43}\frac{b^2-b'^2}{h'^2})^{\frac12}} \nonumber \\ 
& -\la^2 h^2\big(1+\la^{-\frac43}\mfrac{b^2}{h^2}\big)\xi\big(\mfrac{\la^{-\frac23}b}{h}\big)-\mfrac{b^3}{h}.
\end{align}

\noi Thanks to \eqref{eq:1ereutilisationCi} and the inequality $(1-x)^{-\frac12}\le 1+x$ for $x>0$ small enough, we notice that
\begin{equation}\label{eq:argumentdearctan}
0\le \frac{\la^{-\frac23}\frac{b}{h'}}{(1-\la^{-\frac43}\frac{b^2-b'^2}{h'^2})^{\frac12}}-\la^{-\frac23}\big(\mfrac{b}{h'}\big) \le \la^{-2}\big(\mfrac{b^3}{h'^3}\big).
\end{equation}

\noi Moreover, we recall that for any $x>0$,
\begin{equation}\label{eq:inegdearctan}
0\le \xi(x)+\frac{x^3}{3}\le \frac{x^5}{5}
\end{equation}

\noi Inserting both \eqref{eq:argumentdearctan} and \eqref{eq:inegdearctan} into \eqref{eq:rewritinga} and remembering that $a^{(\infty)}$ is given at \eqref{eq:limdeltan}, we deduce that 
\begin{align*}
a^{(\la)}-a^{(\infty)} 
& \le -\la^2 h'^2 \big(1+\la^{-\frac43}\mfrac{b'^2}{h'^2}\big)\la^{-2}\big(\mfrac{b^3}{3h'^3}\big)+\mfrac{\la^2h'^2}{5}\big(1+\la^{-\frac43}\mfrac{b'^2}{h'^2}\big)\big(\la^{-\frac23}\mfrac{b}{h'}
+ \la^{-2}\mfrac{b^3}{h'^3}\big)^5 \\ 
& \hspace*{2cm} +\mfrac{b^3}{h'}\big(1+\la^{-\frac43}\mfrac{b^2}{h'^2}\big)
+ \la^2 h^2 \big(1+\la^{-\frac43}\mfrac{b^2}{h^2}\big)\la^{-2}\mfrac{b^3}{3h^3} -\mfrac{b^3}{h} - \mfrac23 b^3\big(\mfrac1{h'}-\mfrac1{h}\big) \\
& \le \la^{-\frac43}\mfrac{b^5}{5h'^3}\big(1+\la^{-\frac43}\mfrac{b^2}{h'^2}\big)^6+\la^{-\frac43}\mfrac{b^5}{h'^3}+\la^{-\frac43}\mfrac{b^5}{3h^3} \\
& \le c\la^{-\frac43}\mfrac{b^5}{h'^3} \\
& \le c \la^{-\frac43}\la^{\eps_1+5\eps_2}\mfrac{b'^2}{h'^2}
\end{align*}
where we have obtained the last line by using (C2) then (C1).

\pass Similarly, we verify that $(a^{(\infty)}-a^{(\la)})$ has the same upper-bound and this shows that
\begin{equation}\label{eq:alambdatermine}
\la^{\frac43}|a^{(\la)}-a^{(\infty)}|\le \la^{\eps_1+5\eps_2}\mfrac{b'^2}{h'^2}.
\end{equation}

\noi Inserting \eqref{eq:1ertermecas2regle} and \eqref{eq:alambdatermine} into \eqref{eq:cas21ereineg}, we finally obtain \eqref{eq:leldorado} with the choice
$\delta=\frac43 \eps_0-3\eps_1-10\eps_2>0$ as soon as $\eps_0>\frac94 \eps_1+\frac{15}{2}\eps_2$. Naturally, this updated choice of $\delta$ is consistent with \eqref{eq:leldorado0} as well. 

\pass We are now ready to show \eqref{eq:resultatamontrer}. Noticing that
\begin{align}\label{eq:decompintplapinfty}
& \int_{E_{(b,h)}^{(\la)}(\eps_1,\eps_2)} |p^{(\la)}((b,h),(b',h'))-p^{(\infty)}((b,h),(b',h'))|\dd b'\dd h' \nonumber \\
& \hspace{0.2cm}\le\int_{E_{(b,h)}^{(\la)}(\eps_1,\eps_2)\cap (0,b)\times(0,h)}|p^{(\la)}((b,h),(b',h'))-p^{(\infty)}((b,h),(b',h'))| \dd b'\dd h'\nonumber \\ 
& \hspace*{2cm} +\int_0^b\int_h^{(h^2+\la^{-\frac43}(b^2-b'^2))^{\frac12}}p^{(\la)}((b,h),(b',h')) \dd b'\dd h',
\end{align}
we observe that the two following estimates are enough to derive \eqref{eq:resultatamontrer}:

\begin{align}
& \lim_{\la\to\infty}(\log\la) \int_{E_{(b,h)}^{(\la)}(\eps_1,\eps_2)\cap(0,b)\times(0,h)}|p^{(\la)}((b,h),(b',h'))-p^{(\infty)}((b,h),(b',h'))|
\dd b'\dd h'= 0, \label{eq:1ereintplapinfty} \\
& \lim_{\la\to\infty}(\log\la) \int_0^b\int_h^{(h^2+\la^{-\frac43}(b^2-b'^2))^{\frac12}} p^{(\la)}((b,h),(b',h'))
\dd b'\dd h' = 0 .\label{eq:2emeintplapinfty}
\end{align}

\noi We start by proving \eqref{eq:2emeintplapinfty}. Inserting the right-hand side of \eqref{eq:leldorado0} into the integral and using the inequality $(1+x)^{\frac12}\le 1+\frac{x}{2}$ for $x>0$, we get for any $(b,h)\in E^{(\la)}(\eps_1)$ such that $b\ge \la^{\frac{\eps_0-1}{3}}$:
\begin{align*}
\int_0^b\int_h^{(h^2+\la^{-\frac43}(b^2-b'^2))^{\frac12}}p^{(\la)}((b,h),(b'h'))\dd b'\dd h' 
& \le c\la^{\frac43-\delta}\int_0^bh((1+\la^{-\frac43}\frac{b^2-b'^2}{h^2})^{\frac12}-1)\dd b' \\
& \le c\la^{-\delta}\frac{1}{h}\int_0^b(b^2-b'^2)\dd b'\\
& = \mfrac{2c}{3}\la^{-\delta}\mfrac{b^3}{h} \\
& \le c\la^{-\delta+\eps_1}.
\end{align*}

\noi We conclude that the integral is negligible in front of any negative power of $\log\la$ as soon as $\delta-\eps_1>0$, which occurs when $\eps_0>3\eps_1+\frac{15}{2}\eps_2.$ This proves \eqref{eq:2emeintplapinfty}.

\pass We now turn our attention to \eqref{eq:1ereintplapinfty}. Integrating the right-hand side from \eqref{eq:leldorado} for $h'\in (\la^{-\eps_1}b'^3,h)$ (condition (C1)) and $b'\in (\la^{\frac{\eps_0-1}{3}-\eps_2},b)$ (conditions (C0) and (C2)), we obtain
\begin{align*}
\int_{\la^{\frac{\eps_0-1}{3}-\eps_2}}^b
 & \int_{\la^{-\eps_1}b'^3}^h\big|p^{(\la)}((b,h),(b',h'))-p^{(\infty)}((b,h),(b',h'))\big|\dd b'\dd h' \\
 & \le c\la^{-\delta}\int_{\la^{\frac{\eps_0-1}{3}-\eps_2}}^b b'^2\left[-\frac1{h'}\right]_{\la^{-\eps_1}b'^3}^h\dd b'\\
 & \le c\la^{-\delta+\eps_1}\int_{\la^{\frac{\eps_0-1}{3}-\eps_2}}^b \frac1{b'}\dd b' \\
 & \le c\la^{-\delta+\eps_1}\log \la,
\end{align*}
where the last inequality comes from the condition $b\le \la^{\eps_1}$ satisfied by any $(b,h)\in E^{(\la)}(\eps_1)$. 

\pass We conclude that the integral is negligible in front of any negative power of $\log\la$ as soon as $\delta-\eps_1=\frac43\eps_0-4\eps_1-10\eps_2>0$. This proves \eqref{eq:1ereintplapinfty} and in turn \eqref{eq:resultatamontrer} thanks to \eqref{eq:decompintplapinfty}. The proof of Proposition \ref{prop3} is complete.
\end{proof}

\noi From now on we assume that Markov chains $\{(B_n^{(\la)},H_n^{(\la)})\}_n$ and $\{(B_n^{(\infty)},H_n^{(\infty)})\}_n$ have both same initial distribution given at \eqref{eq:defBoHo}. We decide from now on to couple the two Markov chains $\{(B_n^{(\la)},H_n^{(\la)})\}_n$ and $\{(B_n^{(\infty)},H_n^{(\infty)})\}_n$ with respective transition probabilities $p^{(\la)}$ and $p^{(\infty)}$ in the following way. We define $(B_0^{(\la)},H_0^{(\la)})=(B_0^{(\infty)},H_0^{(\infty)})$ as in \eqref{eq:defBnHn} setting
\begin{equation*}
B_0^{(\la)}=B_0^{(\infty)}=\frac{\la^{-\frac13}}{2}\|Z_0-Z_{\textsl r}^{(\la)}\| \quad\text{and}\quad H_0^{(\la)}=H_0^{(\infty)}=\la^{-1}\dd(V_0^{(\la)},(Z_{\textsl r}^{(\la)},Z_0)),
\end{equation*}
and then we recursively construct the coupling as follows. 

\noi Let us assume that $(B_0^{(\la)},H_0^{(\la)}),\ldots,(B_n^{(\la)},H_n^{(\la)})$ and  $(B_0^{(\infty)},H_0^{(\infty)}),\ldots, (B_n^{(\infty)},H_n^{(\infty)})$ have been constructed for all $0\le k\le n$. We deal with the two following cases:
\begin{itemize}[parsep=-0.15cm,itemsep=0.25cm,topsep=0.2cm,wide=0.15cm,leftmargin=0.5cm]
\item[-] If $(B_k^{(\la)},H_k^{(\la)})= (B_k^{(\infty)},H_k^{(\infty)})$ for all $0\le k\le n$, we define the two couples $({B_{n+1}^{(\la)}},{H_{n+1}^{(\la)}})$ and $(B_{n+1}^{(\infty)},H_{n+1}^{(\infty)})$ so that they coincide with probability $$1-\mfrac12 \iint |p^{(\la)}((B_n^{(\la)},H_n^{(\la)}),(b',h'))-p^{(\infty)}((B_n^{(\la)},H_n^{(\la)}),(b',h'))|\indicat_{S_{(b,h)}}(b',h')\dd b'\dd h',$$
\item[-] If there exists $0\le k\le n$ such that $(B_k^{(\la)},H_k^{(\la)})\ne (B_k^{(\infty)},H_k^{(\infty)})$, we define $({B_{n+1}^{(\la)}},{H_{n+1}^{(\la)}})$ and $(B_{n+1}^{(\infty)},H_{n+1}^{(\infty)})$ independently with respective distributions $p^{(\la)}((B_n^{(\la)},H_n^{(\la)}),\cdot)$ and $p^{(\infty)}((B_n^{(\infty)},H_n^{(\infty)}),\cdot)$.
\end{itemize}

\pass We will show that with high probability, the coupling is exact until a certain stopping time that we now describe. For $\eps_0>0$ we denote by $\tau_\la(\eps_0)$ the stopping time
\begin{equation}\label{eq:deftaula}
\tau_\la(\eps_0)=\inf\Big\{n\ge 1 : B_n^{(\infty)}<\la^{\frac{\eps_0-1}{3}}\Big\}.
\end{equation}
In Lemma \ref{lem:taulainf}, we deduce from Corollary \ref{cor:logBn} a weak concentration result for $\tau_\la(\eps_0)$.
\begin{lem}\label{lem:taulainf}
For every $\eps_0\in (0,1)$ and $\eta>\frac25 \eps_0$,
\begin{equation}\label{eq:grandedevtaula}
\lim_{\la\to\infty}\P\Big[|\tau_\la(\eps_0)-\mfrac25 \log\la|> \eta\log\la\Big]=0.
\end{equation}
\end{lem}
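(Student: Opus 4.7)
The plan is to reduce the statement to the strong law of large numbers applied to the random walk obtained by taking logarithms of $B_n^{(\infty)}$. Using the recursion $B_{n+1}^{(\infty)}=\beta_n B_n^{(\infty)}$ from Proposition~\ref{prop:egaliteloiidealisee}, we have $-\log B_n^{(\infty)}=-\log B_0^{(\infty)}+S_n$ where $S_n=\sum_{k=0}^{n-1}(-\log\beta_k)$ is a non-decreasing random walk with iid increments of common mean $\E[-\log\beta_0]=\tfrac{5}{6}$, as computed in the proof of Corollary~\ref{cor:logBn}. In these terms,
$$\tau_\la(\eps_0)=\inf\bigl\{n\geq 1 : S_n>\mfrac{1-\eps_0}{3}\log\la-\log B_0^{(\infty)}\bigr\}.$$
Since $B_0^{(\infty)}$ is a tight positive random variable of order $1$ while the threshold grows linearly in $\log\la$, the law of large numbers suggests that $\tau_\la(\eps_0)$ behaves like the first integer $n$ for which $\tfrac{5n}{6}$ exceeds $\tfrac{1-\eps_0}{3}\log\la$, namely $\tfrac{2(1-\eps_0)}{5}\log\la=\bigl(\tfrac{2}{5}-\tfrac{2\eps_0}{5}\bigr)\log\la$. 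The distance from this prediction to $\tfrac{2}{5}\log\la$ is exactly $\tfrac{2\eps_0}{5}\log\la$, which is the source of the threshold $\eta>\tfrac{2\eps_0}{5}$.

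I will split the deviation event into an upper and a lower tail. For the upper tail $\{\tau_\la(\eps_0)>(\tfrac{2}{5}+\eta)\log\la\}$, the monotonicity of $\{S_n\}_n$ identifies it with $\{S_{n_\la^+}\leq\tfrac{1-\eps_0}{3}\log\la-\log B_0^{(\infty)}\}$ where $n_\la^+=\lfloor(\tfrac{2}{5}+\eta)\log\la\rfloor$. The LLN gives $S_{n_\la^+}/\log\la\to\tfrac{5}{6}(\tfrac{2}{5}+\eta)=\tfrac{1}{3}+\tfrac{5\eta}{6}$ in probability, and this limit is strictly greater than $\tfrac{1-\eps_0}{3}$ for any $\eta>0$, so the probability tends to $0$. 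Symmetrically, for the lower tail $\{\tau_\la(\eps_0)<(\tfrac{2}{5}-\eta)\log\la\}$, monotonicity reduces the event to $\{S_{n_\la^-}>\tfrac{1-\eps_0}{3}\log\la-\log B_0^{(\infty)}\}$ with $n_\la^-=\lceil(\tfrac{2}{5}-\eta)\log\la\rceil$, and the LLN yields $S_{n_\la^-}/\log\la\to\tfrac{1}{3}-\tfrac{5\eta}{6}$, which is strictly less than $\tfrac{1-\eps_0}{3}$ precisely when $\eta>\tfrac{2\eps_0}{5}$. This is the only point in the argument where the hypothesis on $\eta$ actually enters.

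The only mild technical point is the passage from the almost sure convergence of $S_n/n$ provided by the SLLN to the convergence in probability of $S_{n_\la^\pm}/n_\la^\pm$, whose index depends on $\la$. This follows immediately from the elementary fact that if $X_n\to c$ almost surely and $n_\la\to\infty$ deterministically, then $X_{n_\la}\to c$ in probability. Once this bookkeeping is handled, no further obstacle remains.
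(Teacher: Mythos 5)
Your proof is correct and follows the same route as the paper: reduce to the non-decreasing random walk $-\log B_n^{(\infty)}$ with iid increments of mean $\frac56$, use monotonicity to convert $\{\tau_\la>n\}$ and $\{\tau_\la<n\}$ into one-sided events on $B_n^{(\infty)}$ at a deterministic index, and apply the law of large numbers (the paper phrases it as a ``classical large deviation probability which goes to $0$,'' but the content is identical). The only blemish is a sign slip in the threshold, which should be $S_n>\frac{1-\eps_0}{3}\log\la+\log B_0^{(\infty)}$ rather than $-\log B_0^{(\infty)}$, but since $B_0^{(\infty)}$ is a tight order-one variable this does not affect the argument.
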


\begin{proof}
\noi Thanks to Corollary \ref{cor:logBn}, we obtain for $\eta>0$
\begin{align}\label{eq:ldp}
\P\Big[\tau_\la(\eps_0)> (\mfrac25+\eta)\log\la\Big]
= \P\bigg[B_{(\frac25+\eta)\log\la}^{(\infty)}>\la^{\frac{\eps_0-1}{3}}\bigg] = \P\bigg[L_{(\frac25+\eta)\log\la}< \frac{1-\eps_0}{\frac65+3\eta}\bigg].
\end{align}

\noi Consequently, the right-hand side in \eqref{eq:ldp} is a classical large deviation probability which goes to 0 when $\la\longrightarrow\infty$. Moreover, thanks to the monotonicity of the sequence  $\{B_n^{(\infty)}\}_n$, for $\eta\in (0,\frac25)$,
\begin{align*}
\P\Big[\tau_\la(\eps_0)<(\mfrac25-\eta)\log\la \Big] =\P\bigg[B_{(\frac25-\eta)\log\la}^{(\infty)} <\la^{\frac{\eps_0-1}{3}}\bigg]
=\P\bigg[L_{(\frac25-\eta)\log\la}>\frac{1-\eps_0}{\frac65-3\eta}\bigg].
\end{align*}
When $\eta>\frac25 \eps_0$, the right-hand side in \eqref{eq:ldp} is a classical large deviation probability which goes to 0 when $\la\longrightarrow\infty$.
\end{proof}

\pass Let us consider the event $E_{\mbox{\tiny{coupl}}}^{(\la)}(\eps_0)$ which garantees the perfect coupling until time $\tau_\la(\eps_0)$, i.e.

\begin{equation}\label{eq:defpascouplage}
E_{\mbox{\tiny{coupl}}}^{(\la)}(\eps_0) = \bigcap_{n=1}^{\tau_\la(\eps_0)}\Big\{ (B_n^{(\la)},H_n^{(\la)})= (B_n^{(\infty)},H_n^{(\infty)}) \Big\}.
\end{equation}

\noi The main result of this Section is the following.

\begin{prop}\label{prop:evenpascouplage}
For every $\eps_0>0$,
\begin{equation*}
\lim_{\la\to\infty}\P\big[E_{\emph{\tiny{coupl}}}^{(\la)}(\eps_0)\big]=1.
\end{equation*}
\end{prop}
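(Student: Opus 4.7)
The plan is to bound the probability of coupling failure by summing, over the time steps $n\le\tau_\la(\eps_0)-1$, the total-variation distance between $p^{(\la)}(X_n,\cdot)$ and $p^{(\infty)}(X_n,\cdot)$ at the common state $X_n$, and then to invoke Proposition~\ref{prop3} to show that, on a high-probability good event, this sum tends to zero.

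Let $A_n$ denote the event that the two chains coincide at all times $k\le n$. Under our coupling, the conditional probability of a discrepancy at step $n+1$ given $A_n$ equals the total variation $\frac{1}{2}\int|p^{(\la)}(X_n,\cdot)-p^{(\infty)}(X_n,\cdot)|\,\dd b'\dd h'$ at the common state $X_n=(B_n^{(\infty)},H_n^{(\infty)})$. A union bound thus yields
\begin{equation*}
\P\big[(E_{\mbox{\tiny{coupl}}}^{(\la)}(\eps_0))^c\big]\le\E\bigg[\sum_{n=0}^{\tau_\la(\eps_0)-1}\tfrac{1}{2}\int|p^{(\la)}(X_n,\cdot)-p^{(\infty)}(X_n,\cdot)|\,\dd b'\,\dd h'\bigg].
\end{equation*}
Introduce the good event
\begin{equation*}
G_\la=\{\tau_\la(\eps_0)\le\lceil\log\la\rceil\}\cap\{(B_n^{(\infty)},H_n^{(\infty)})\in E^{(\la)}(\eps_1)\ \text{for every}\ n\le\lceil\log\la\rceil\}.
\end{equation*}
On $G_\la$, each state $X_n$ in the sum lies in $E^{(\la)}(\eps_1)$ and satisfies $B_n^{(\infty)}\ge\la^{(\eps_0-1)/3}$. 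Splitting the total variation according to $E_{X_n}^{(\la)}(\eps_1,\eps_2)$ and its complement, and using that both kernels integrate to $1$ to bound $\int_{(E_{X_n}^{(\la)})^c}p^{(\la)}\le\int_{(E_{X_n}^{(\la)})^c}p^{(\infty)}+\int_{E_{X_n}^{(\la)}}|p^{(\la)}-p^{(\infty)}|$, parts~(i) and~(ii) of Proposition~\ref{prop3} combine to give $\tfrac12\int|p^{(\la)}(X_n,\cdot)-p^{(\infty)}(X_n,\cdot)|=o(1/\log\la)$ uniformly. The sum restricted to $G_\la$ is therefore $O(\log\la)\cdot o(1/\log\la)=o(1)$.

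It remains to show $\P[G_\la^c]\to 0$. The time bound follows from Lemma~\ref{lem:taulainf} applied with any fixed $\eta\in(\tfrac{2}{5}\eps_0,\tfrac{3}{5})$. For the stability in $E^{(\la)}(\eps_1)$, note first that $\{B_n^{(\infty)}\}_n$ decreases almost surely (Corollary~\ref{cor:logBn}), so $\{B_n^{(\infty)}>\la^{\eps_1}\ \text{for some}\ n\}\subset\{B_0^{(\infty)}>\la^{\eps_1}\}$, whose probability vanishes by Lemma~\ref{lem:convBoHo}. For the condition $T_n^{(\infty)}:=(B_n^{(\infty)})^3/H_n^{(\infty)}\le\la^{\eps_1}$, we use the Letac construction in the proof of Proposition~\ref{prop:loiTn}: there exists $T_n'$ with $T_n'\overset{(d)}{=}T_n^{(\infty)}$ and the pathwise bound $T_n'\le T_0^{(\infty)}+\tfrac{3}{2}\sum_{k\ge 1}\beta_1^3\cdots\beta_k^3\xi_k$, whose second term has the distribution of $T$. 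This yields $\sup_n\E[T_n^{(\infty)}]\le\E[T_0^{(\infty)}]+\E[T]<\infty$, and a union bound with Markov's inequality gives $\P[\max_{n\le\lceil\log\la\rceil}T_n^{(\infty)}>\la^{\eps_1}]=O(\la^{-\eps_1}\log\la)\to 0$.

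The main obstacle I foresee is ensuring that the idealized chain actually stays in $E^{(\la)}(\eps_1)$ over a logarithmically long horizon: this requires the uniform-in-$\la$ moment bound $\sup_\la\E[T_0^{(\infty)}]<\infty$ for the initial state, which follows from \eqref{eq:B0H0} together with the tightness of the quadruplet $(R^{(\la)},\Theta_{\textsl l}^{(\la)},\Theta_{\textsl c}^{(\la)},\Theta_{\textsl r}^{(\la)})$ obtained in Proposition~\ref{prop:simuquadruplet}, and then the Letac comparison to propagate this control along the Markov chain.
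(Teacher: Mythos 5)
Your proposal has the same two-pronged structure as the paper's proof: union-bound the coupling-failure probability by summing, over the steps before $\tau_\la(\eps_0)$, the one-step total-variation distance at the common state, and control that sum on a high-probability good event using Proposition~\ref{prop3}. The split of the total variation into the region $E^{(\la)}_{X_n}(\eps_1,\eps_2)$ and its complement, using Proposition~\ref{prop3}~(i) to absorb the complement and Proposition~\ref{prop3}~(ii) on the domain itself, is a clean way to package the argument; the paper instead restricts the coupling failure event to the good region (where the idealized step lands in $E^{(\la)}_{(B_n,H_n)}$), so that only Proposition~\ref{prop3}~(ii) is needed for the coupling estimate, while Proposition~\ref{prop3}~(i) is consumed entirely in Lemma~\ref{lem:goodevent} to show the good event has probability tending to one. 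These are two organizations of the same estimates.

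The genuinely different move in your proposal is the way you control $\P[G_\la^c]$. The paper defines $E^{(\la)}_{\text{good}}$ through the \emph{one-step} inclusions $(B_n^{(\infty)},H_n^{(\infty)})\in E^{(\la)}_{(B_{n-1}^{(\infty)},H_{n-1}^{(\infty)})}(\eps_1,\eps_2)$ together with $(B_0^{(\infty)},H_0^{(\infty)})\in E^{(\la)}(\eps_1)$, so that stability in $E^{(\la)}(\eps_1)$ propagates automatically, and bounds the escape probability at each step by Proposition~\ref{prop3}~(i) — no moment control is needed at all. You instead impose the \emph{global} condition $T_n^{(\infty)}\le\la^{\eps_1}$ for all $n\le\lceil\log\la\rceil$ and verify it by Markov's inequality and a union bound, via the Letac comparison $\sup_n\E[T_n^{(\infty)}]\le\E[T_0^{(\infty)}]+\E[T]$. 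That chain of inequalities is fine, but it hinges on $\sup_\la\E[T_0^{(\la)}]<\infty$, and this is where you are a bit too quick: tightness of the quadruplet (or convergence in total variation, from Proposition~\ref{prop:simuquadruplet}) does \emph{not} give a uniform first-moment bound. To close the argument you would have to extract an actual moment estimate from the explicit density~\eqref{eq:densityquadruplet} (using the exponential decay in $r$ and the fact that the $\theta$'s are supported in an interval of half-width $\la^{\frac23}\arccos(\la/(\la+\la^{-\frac13}r))\lesssim\sqrt{2r}$), or simply revert to the paper's one-step argument, which avoids moments entirely. Aside from this one imprecision, the proof is correct and essentially equivalent to the paper's.
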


\noi Proving Proposition \ref{prop:evenpascouplage} requires to introduce beforehand a particular event with high probability where the couples $(B_n^{(\infty)},H_n^{(\infty)})$ for $n\le \tau^{(\infty)}_{\la}$ behave nicely, see \eqref{eq:Egood}. This takes place in Lemma \ref{lem:goodevent} below. Then we prove Proposition \ref{prop:evenpascouplage} at the end of Section \ref{sec:couplage}.

\pass For $\eps_0,\eps_1,\eps_2>0$, we introduce
\begin{align}
E_{\mbox{\tiny{good}}}^{(\la)}=E_{\mbox{\tiny{good}}}^{(\la)}(\eps_0,\eps_1,\eps_2) & =
\bigcap_{n=1}^{\tau^{(\infty)}_\la(\eps_0)} \Big\{(B_n^{(\infty)},H_n^{(\infty)})\in E^{(\la)}_{(B_{n-1}^{(\infty)},H_{n-1}^{(\infty)})}(\eps_1,\eps_2)\Big\}\nonumber \\
& \hspace*{1.2cm}\bigcap\Big\{(B_0^{(\infty)},H_0^{(\infty)})\in E^{(\la)}(\eps_1)\Big\} \bigcap \Big\{\tau_\la(\eps_0)\le \log\la\Big\}. \label{eq:Egood}
\end{align}

\noi Whenever possible, we will omit the dependency of $E_{\mbox{\tiny{good}}}^{(\la)}$ with respect to $\eps_0,\eps_1,\eps_2$ for sake of readability.

\begin{lem}\label{lem:goodevent}
For every $\eps_0,\eps_1,\eps_2$ such that $\eps_0>4\eps_1+\frac{15}{2}\eps_2$,
\begin{equation*}
\lim_{\la\to\infty}\P\Big[E_{\emph{\tiny{good}}}^{(\la)}(\eps_0,\eps_1,\eps_2)\Big]=1.
\end{equation*}
\end{lem}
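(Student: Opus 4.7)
\textbf{Proof plan for Lemma \ref{lem:goodevent}.} The strategy is to union-bound the probability of the complement of $E_{\text{good}}^{(\la)}$ over its three defining events. The last event $\{\tau_\la(\eps_0)\le \log\la\}$ has probability tending to $1$ by Lemma \ref{lem:taulainf}, since $\tau_\la(\eps_0)/\log\la\to \tfrac25$ in probability, and $\tfrac25<1$. The initial-condition event $\{(B_0^{(\infty)},H_0^{(\infty)})\in E^{(\la)}(\eps_1)\}$ also holds with probability tending to $1$: by Lemma \ref{lem:convBoHo}, the couple $(B_0^{(\infty)},H_0^{(\infty)})$ converges in distribution to the almost-surely finite limit $(\tfrac12(\Theta_{\textsl c}-\Theta_{\textsl l}),1)$, so for $\la$ large any compact of $(0,\infty)^2$ lies inside $E^{(\la)}(\eps_1)=\{b\le\la^{\eps_1},\,b^3\le\la^{\eps_1}h\}$.

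The bulk of the proof addresses the stability condition $(B_n^{(\infty)},H_n^{(\infty)})\in E^{(\la)}_{(B_{n-1}^{(\infty)},H_{n-1}^{(\infty)})}(\eps_1,\eps_2)$ for all $1\le n\le \tau_\la(\eps_0)$. On the event $\{\tau_\la(\eps_0)\le\log\la\}$ already controlled above, this splits into two requirements: (a) $B_n^{(\infty)}\ge \la^{-\eps_2}B_{n-1}^{(\infty)}$, and (b) $(B_n^{(\infty)})^3\le \la^{\eps_1}H_n^{(\infty)}$, i.e. $T_n^{(\infty)}\le \la^{\eps_1}$ using the notation of Proposition \ref{prop:loiTn}. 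For (a), the representation \eqref{eqrecbn} gives $B_n^{(\infty)}=\beta_{n-1}B_{n-1}^{(\infty)}$ with $\beta_{n-1}\sim\mathrm{Beta}(2,2)$, so the condition fails iff $\beta_{n-1}<\la^{-\eps_2}$; since $\P[\beta<t]\le 3t^2$ for small $t$, a union bound over $n\le\log\la$ yields failure probability at most $c(\log\la)\la^{-2\eps_2}\to 0$.

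For (b), we exploit the random difference equation \eqref{eq:Tnrec}, $T_{n+1}^{(\infty)}=\beta_n^3(T_n^{(\infty)}+\tfrac32\xi_n)$. Since $\E[\beta^{3k}]=q_{3k}<1$ for every $k\ge 1$ and $\xi_n$ has all moments, induction on $k$ (exactly in the style of the moment computation at the end of the proof of Proposition \ref{prop:loiTn}) produces a constant $c_k$ with
\begin{equation*}
\E\bigl[(T_n^{(\infty)})^k\bigr]\le c_k\bigl(1+\E[(T_0^{(\infty)})^k]\bigr)
\end{equation*}
uniformly in $n$. The initial moments $\E[(T_0^{(\infty)})^k]$ are bounded uniformly in $\la$ thanks to the convergence in total variation of Proposition \ref{prop:simuquadruplet} together with the explicit expression \eqref{eq:B0H0}, from which $T_0^{(\infty)}=(B_0^{(\la)})^3/H_0^{(\la)}$ is tight with uniformly bounded moments (the denominator is bounded below by a deterministic constant for large $\la$). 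Markov's inequality then gives $\P[T_n^{(\infty)}>\la^{\eps_1}]\le C_k\la^{-k\eps_1}$, and a union bound over the at most $\log\la$ steps yields failure probability at most $C_k(\log\la)\la^{-k\eps_1}\to 0$ for any fixed $k\ge 1$.

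The main obstacle is step (b): establishing uniform-in-$n$ and uniform-in-$\la$ moment bounds on $T_n^{(\infty)}$, since the initial distribution depends on $\la$ and only limit statements are available from Section \ref{sec:convpos}. Once this moment control is in place, the remaining union bounds are routine. The hypothesis $\eps_0>4\eps_1+\tfrac{15}{2}\eps_2$ is not actually required at this stage; it is recorded in the statement to keep the parameters compatible with the coupling step of Proposition \ref{prop3}, which will be applied on $E_{\text{good}}^{(\la)}(\eps_0,\eps_1,\eps_2)$ in the proof of Proposition \ref{prop:evenpascouplage}.
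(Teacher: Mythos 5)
Your decomposition of $(E^{(\la)}_{\mbox{\tiny{good}}})^c$ into the three defining events mirrors the paper's, and the treatment of $\{\tau_\la(\eps_0)\le\log\la\}$ (via Lemma \ref{lem:taulainf}) and of the initial-condition event (via Lemma \ref{lem:convBoHo}) is exactly the paper's. Where you diverge is in the stability part: the paper feeds the Markov property into Proposition \ref{prop3}(i), which says that $\int_{\R^2\setminus E^{(\la)}_{(b,h)}(\eps_1,\eps_2)}p^{(\infty)}((b,h),\cdot)$ is $\mathrm{o}((\log\la)^{-1})$ uniformly in $(b,h)\in E^{(\la)}(\eps_1)$, and then does a single union bound over $n\le\log\la$. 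You instead split the definition of $E^{(\la)}_{(b,h)}(\eps_1,\eps_2)$ into the two scalar constraints and attack them with the explicit representations of Proposition \ref{prop:egaliteloiidealisee}. Constraint (a), $\beta_{n-1}\ge\la^{-\eps_2}$, is handled cleanly by $\P[\beta<t]\le 3t^2$. Your observation that the numerical constraint $\eps_0>4\eps_1+\frac{15}{2}\eps_2$ is not genuinely needed here is also correct: the proof of Proposition \ref{prop3}(i) never uses the lower bound on $b$, and the paper carries the hypothesis along only so that the combined use of parts (i) and (ii) in the coupling step goes through.

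The genuine gap is in step (b). You need $\sup_{\la}\E[(T_0^{(\infty)})^k]<\infty$ for some $k\ge 1$, and you justify it by citing total-variation convergence (Proposition \ref{prop:simuquadruplet}) plus ``the denominator is bounded below by a deterministic constant for large $\la$''. Neither point is sufficient. Convergence in total variation says nothing about convergence or uniform boundedness of moments --- a family of densities can converge in TV while higher moments blow up. And $H_0^{(\la)}=(1+\la^{-\frac43}R^{(\la)})\cos\big(\tfrac{\la^{-\frac23}}{2}(\Theta_{\textsl c}^{(\la)}-\Theta_{\textsl l}^{(\la)})\big)$ is \emph{not} bounded below deterministically: when $R^{(\la)}$ is of order $\la^{\frac43}$ the angular range widens and the cosine factor can be arbitrarily small. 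The claim $\sup_\la \E[(T_0^{(\infty)})^k]<\infty$ is likely true, but proving it requires returning to the explicit density \eqref{eq:densityquadruplet} and establishing a $\la$-uniform tail bound on the quadruplet $(R^{(\la)},\Theta^{(\la)})$ --- a piece of analysis the paper never does, because its route via Proposition \ref{prop3}(i) only needs an estimate that is uniform in $(b,h)\in E^{(\la)}(\eps_1)$, not any control on the state distribution $\P_{(B_{n-1}^{(\infty)},H_{n-1}^{(\infty)})}$. As written, your proof is incomplete at precisely the point you flag as ``the main obstacle''; you would need to supply that tail estimate to close it.
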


\begin{proof} Thanks to the convergence in distribution of $(B_0^{(\la)},H_0^{(\la)})$ stated in Lemma \ref{lem:convBoHo}, we get for every $\eps_1>0$
\begin{equation}\label{eq:BoHopasdansEla}
\lim_{\la\to\infty}\P\Big[(B_0^{(\la)},H_0^{(\la)})\in E^{(\la)}(\eps_1)\Big]=1.
\end{equation}

\noi We can now upper-bound the probability of $(E^{(\la)}_{\mbox{\tiny{good}}})^c$ as follows:
\begin{align}\label{eq:decompprobaevencontr}
\P\big[(E^{(\la)}_{\mbox{\tiny{good}}})^c\big]
& \le \P[\tau_\la>\log\la]+\P\big[(B_0^{(\infty)},H_0^{(\infty)})\not\in E^{(\la)}(\eps_1)\big]\nonumber \\
& \hspace{-1cm} + \sum_{n=1}^{\log\la}\P\bigg[\big\{(B_{n-1}^{(\infty)},H_{n-1}^{(\infty)})\in E^{(\la)}(\eps_1)\big\}\cap \big\{(B_n^{(\infty)},H_n^{(\infty)})\not\in E^{(\la)}_{(B_{n-1}^{(\infty)},H_{n-1}^{(\infty)})}(\eps_1,\eps_2)\big\}\bigg]
\end{align}
where we have used the inclusion $E^{(\la)}_{(b,h)}(\eps_1,\eps_2)\subset E^{(\la)}(\eps_1).$

\pass Moreover,
\begin{align*}
\P\bigg[\big\{(B_{n-1}^{(\infty)},H_{n-1}^{(\infty)}) 
& \in E^{(\la)}(\eps_1) \big\}\cap \big\{(B_n^{(\infty)},H_n^{(\infty)})\not\in E^{(\la)}_{(B_{n-1}^{(\infty)},H_{n-1}^{(\infty)})}(\eps_1,\eps_2)\big\}\bigg] \\
& \le \int_{E^{(\la)}(\eps_1)}\bigg(\int_{\R^2\setminus E^{(\la)}_{(b,h)}(\eps_1,\eps_2)} p^{(\infty)}((b,h),(b',h'))\dd b' \dd h'\bigg)
\dd \P_{(B_{n-1}^{(\infty)},H_{n-1}^{(\infty)})}(b,h)
\end{align*}
where $\P_{(B_{n-1}^{(\infty)},H_{n-1}^{(\infty)})}$ denotes the distribution of ${(B_{n-1}^{(\infty)},H_{n-1}^{(\infty)})}$. Thanks to Proposition \ref{prop3} (i), the integral inside the brackets is $\mathrm{o}((\log\la)^{-1})$ uniformly in $(b,h)$, which implies that
\begin{equation}\label{eq:sumlogtermsneglig}
\lim_{\la\to\infty} \sum_{n=1}^{\log\la}\P\bigg[\big\{(B_{n-1}^{(\infty)},H_{n-1}^{(\infty)})\in E^{(\la)}(\eps_1)\big\}\cap\big\{(B_n^{(\infty)},H_n^{(\infty)})\not\in E^{(\la)}_{(B_{n-1}^{(\infty)},H_{n-1}^{(\infty)})}(\eps_1,\eps_2)\big\}\bigg]=0.
\end{equation}

\noi Consequently, inserting \eqref{eq:sumlogtermsneglig}, \eqref{eq:BoHopasdansEla} and \eqref{eq:grandedevtaula} into \eqref{eq:decompprobaevencontr}, we complete the proof of Lemma \ref{lem:goodevent}.
\end{proof}

\pass\textbf{Proof of Proposition \ref{prop:evenpascouplage}.}

\noi We now estimate the probability of $E_{\mbox{\tiny{coupl}}}^{(\la)}$. First, we notice that
\begin{align}\label{eq:decompPAla}
\P\big[(E_{\mbox{\tiny{coupl}}}^{(\la)})^c\big] \le 1-\P\big[E^{(\la)}_{\mbox{\tiny{good}}}\big]+\P\big[({E_{\mbox{\tiny{coupl}}}^{(\la)}})^c\cap E^{(\la)}_{\mbox{\tiny{good}}}\big].
\end{align}

\noi Thanks to Lemma \ref{lem:goodevent}, \eqref{eq:decompPAla} implies that it is enough to show that
\begin{equation}\label{eq:probapetite}
\lim_{\la\to\infty}\P\big[({E_{\mbox{\tiny{coupl}}}^{(\la)}})^c\cap E^{(\la)}_{\mbox{\tiny{good}}}\big]=0.
\end{equation}

\noi Below, we bound this probability in order to use \eqref{eq:resultatamontrer}.
\begin{align}
& \P\big[({E_{\mbox{\tiny{coupl}}}^{(\la)}})^c\cap E^{(\la)}_{\mbox{\tiny{good}}}\big] \\
& \le
\sum_{n=1}^{\log\la}\P\left[E^{(\la)}_{\mbox{\tiny{good}}}\,;\,\forall\;i\le n, (B_i^{(\la)},H_i^{(\la)})=(B_i^{(\infty)},H_i^{(\infty)}),\right.\nonumber\\
& \left.\hspace*{2.5cm}(B_{n+1}^{(\la)},H_{n+1}^{(\la)})\ne(B_{n+1}^{(\infty)},H_{n+1}^{(\infty)}),\tau_\la(\eps_0)\ge n+1\right]\nonumber \\
& \le \sum_{n=1}^{\log\la}\P\left[\forall\;i\le n, (B_i^{(\la)},H_i^{(\la)})=(B_i^{(\infty)},H_i^{(\infty)}),B_n^{(\infty)}\ge \la^{\frac{\varepsilon_0-1}{3}},(B_n^{(\infty)},H_n^{(\infty)})\in E^{(\la)}(\eps_1),\right.\nonumber\\
&\left.\hspace*{2.5cm} (B_{n+1}^{(\la)},H_{n+1}^{(\la)})\ne(B_{n+1}^{(\infty)},H_{n+1}^{(\infty)}),(B_{n+1}^{(\infty)},H_{n+1}^{(\infty)})\in E^{(\la)}_{(B_n,H_n)}(\eps_1,\eps_2)\right]\nonumber.
\end{align}

\noi Let us denote by $q_{n,\la} $ the probability which appears in the sum in the right-hand side above and let us show that uniformly in $1\le n\le \log\la$,
\begin{equation}\label{eq:qklaneglig}
\lim_{\la\to\infty}(\log\la) q_{n,\la}=0.
\end{equation}

\noi Indeed,
\begin{align*}
(\log\la) q_{n,\la} &\le \int_{\la^{\frac{\eps_0-1}{3}}}^{\la^{\eps_1}}\int_{\la^{-\eps_1}b^3}^{\infty} \bigg(\mfrac{\log\la}{2}\int_{E^{(\la)}_{(b,h)}(\eps_1,\eps_2)} |p^{(\la)}((b,h),(b',h'))\\&\hspace*{5cm}-p^{(\infty)}((b,h),(b',h'))|\dd b' \dd h'\bigg)
\dd \P_{(B_n^{(\infty)},H_n^{(\infty)})}(b,h).
\end{align*}

\noi Thanks to \eqref{eq:resultatamontrer}, the quantity inside the brackets goes to 0 uniformly in $(b,h)$ when $b\ge \la^{\frac{\eps_0-1}{3}}$ and $(b,h)\in E^{(\la)}(\eps_1)$. Consequently, we deduce \eqref{eq:qklaneglig} and the proof is complete. \hfill$\square$

\addtocontents{toc}{\vspace{0.25cm}}%
\section{Proofs of the main results}\label{sec:mainproof}

\pass This section is devoted to the proof of Theorem \ref{theo:menhir}. Our approach consists in dividing the boundary of $\cC^{(\la)}$ into two branches, left and right, which are independent conditional on $(Z_{\textsl l}^{(\la)},Z_{\textsl c}^{(\la)},Z_{\textsl r}^{(\la)})$. In particular, we recall that $\cB^{(\la)}$ denotes the left branch, i.e. the polygonal line which joins all consecutive vertices on the left-hand side of the Voronoi cell associated with $Z_{\textsl c}^{(\la)}$ from the point $(0,\la)$ to the $x$-axis. 
In Section \ref{sec:shapebranch}, we prove part (i) of Theorem \ref{theo:menhir}, i.e. the limit shape of the renormalized Voronoi cell, by showing the convergence of the shape of $\cB^{(\la)}$, see Proposition \ref{prop:estquecestpossible}. Sections \ref{sec:preuveTH2} and \ref{sec:preuveTH3} include the proofs of parts (ii) and (iii) of Theorem \ref{theo:menhir}, namely the local behavior at the origin and the asymptotics for the number of vertices of $\cC^{(\la)}$, which are consequences of their counterparts for $\cB^{(\la)}$, see Propositions \ref{prop:unbusrapide} and \ref{prop:cestpossible} respectively.

\subsection{Proof of Theorem \ref{theo:menhir} (i): convergence to the menhir}\label{sec:shapebranch}

\pass 

\noi Keeping in mind the decomposition of the boundary of $\cC^{(\la)}$ into two independent branches conditional on $(Z_{\textsl l}^{(\la)},Z_{\textsl c}^{(\la)},Z_{\textsl r}^{(\la)})$, we claim that the convergence in distribution of $F^{(\la)}(\cC^{(\la)}-Z_{\textsl c}^{(\la)})$ to the limiting menhir $\cC^{(\infty)}$ described in Section \ref{sec:intro} is a direct consequence of the convergence of the left branch  $\cB^{(\la)}$ stated in Proposition \ref{prop:estquecestpossible} below.

\begin{prop}\label{prop:estquecestpossible}
We endow the set of non empty compact sets of $\R^2$ with the topology of the Hausdorff metric. When $\la\longrightarrow\infty$, we obtain
\begin{equation*}
F^{(\la)}(\cB^{(\la)}-Z_{\textsl c}^{(\la)})\overset{(d)}{\longrightarrow} \cB_{\mathbf l}
\end{equation*}
where $\cB_{\mathbf l}$ is defined in part 3 on page \pageref{defbranchBl}.
\end{prop}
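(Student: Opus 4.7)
The plan is to combine the coupling from Section \ref{sec:coupling} with the explicit geometric formulas from Lemma \ref{lem:lessommetsretrouves}, perform a Taylor expansion in the natural small parameter $\la^{-2/3}$, and handle the tails separately. I would organize the argument in three steps.

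\textbf{Step 1: Reduction to the idealized chain via coupling.} First, I would invoke Proposition \ref{prop:evenpascouplage}: for any fixed $\eps_0>0$, the event $E_{\text{coupl}}^{(\la)}(\eps_0)$ on which $(B_n^{(\la)},H_n^{(\la)})=(B_n^{(\infty)},H_n^{(\infty)})$ for all $n\le\tau_\la(\eps_0)$ has probability tending to one. Combined with Lemma \ref{lem:taulainf}, the horizon of the coupling is of order $\frac25\log\la$ and therefore diverges. Hence it is enough to establish the convergence on this event, after which the prelimit Markov chain may be replaced with the idealized chain $\{(B_n^{(\infty)},H_n^{(\infty)})\}_n$ having the representation \eqref{eq:BnHnrec}. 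Moreover, the initial quadruplet $(R^{(\la)},\Theta_{\textsl l}^{(\la)},\Theta_{\textsl c}^{(\la)},\Theta_{\textsl r}^{(\la)})$ converges in total variation to $(R,\Theta_{\textsl l},\Theta_{\textsl c},\Theta_{\textsl r})$ by Proposition \ref{prop:simuquadruplet}, so I may as well assume these are identical as well.

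\textbf{Step 2: Vertex-wise convergence by Taylor expansion.} Next, I would use Lemma \ref{lem:lessommetsretrouves} to write $V_n^{(\la)}-Z_{\textsl c}^{(\la)}$ as a function of the chain $\{(B_k^{(\la)},H_k^{(\la)})\}_{k\le n}$ and the initial angle $\tfrac12(\Theta_{\textsl c}^{(\la)}+\Theta_{\textsl l}^{(\la)})$, by summing the displacements $V_{k+1}^{(\la)}-V_k^{(\la)}$ whose norms are given by \eqref{eq:dist2somments} along directions whose cumulated angles with the vertical are obtained by summing the turning angles of \eqref{eq:angle2somments}. Applying $F^{(\la)}$ and Taylor expanding the square roots, the $\arcsin$ and the $\arctan$ to leading order in $\la^{-2/3}$, the horizontal coordinate becomes a telescoping sum that matches exactly \eqref{eq:defXnl}, and the vertical coordinate is simply $H_n^{(\la)}$, which equals $H_n^{(\infty)}=H_n^{\mathbf l}$ on the coupling event. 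This yields, for every fixed $n\ge 0$,
\begin{equation*}
F^{(\la)}(V_n^{(\la)}-Z_{\textsl c}^{(\la)}) \overset{(d)}{\longrightarrow} V_n^{\mathbf l}.
\end{equation*}

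\textbf{Step 3: Tail control and Hausdorff convergence.} To upgrade this to Hausdorff convergence, I would show that for any $\delta>0$ there exists $N=N(\delta)$ independent of $\la$ such that both truncated tails lie in the disk of radius $\delta$ centered at the origin with high probability. For the limit, Corollary \ref{cor:logBn} gives $B_n^{(\infty)}\to 0$, and then the recursion \eqref{eq:BnHnlrec} together with \eqref{eq:defXnl} and the almost sure convergence of the involved series gives $V_n^{\mathbf l}\to 0$. On the coupling event this already handles the prelimit vertices up to index $\tau_\la(\eps_0)$. For indices $n>\tau_\la(\eps_0)$ the coupling is no longer available and I would use a direct a priori estimate: since $B_{\tau_\la}^{(\la)}<\la^{(\eps_0-1)/3}$ and the support of $p^{(\la)}((b,h),\cdot)$ given by \eqref{eq:domainesautorisesbh} forces $B_n^{(\la)}$ to be decreasing and $H_n^{(\la)}$ to be at most a small perturbation of its predecessor, the entire post-$\tau_\la$ portion of $\cB^{(\la)}-Z_{\textsl c}^{(\la)}$, after renormalization by $F^{(\la)}$, is contained in a box of size $O(\la^{(\eps_0-1)/3})\times O(H_{\tau_\la}^{(\la)})$ whose vertical side itself tends to $0$ in probability. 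Combining vertex convergence for the truncation at level $N$ with the smallness of both tails gives the Hausdorff convergence.

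\textbf{Main obstacle.} The delicate point is Step 3: the coupling degrades precisely when $B_n^{(\infty)}$ becomes comparable to $\la^{(\eps_0-1)/3}$, and beyond this stopping time I must argue directly on the prelimit chain without the benefit of the explicit representation \eqref{eq:BnHnrec}. The key technical input is therefore a robust tail estimate, obtained either by stochastic domination of the post-$\tau_\la$ chain by a ``shifted'' idealized chain, or by iterating the support condition of $p^{(\la)}$ to confine the remaining branch in a shrinking neighborhood of the origin uniformly in $\la$.
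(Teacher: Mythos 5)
Your overall architecture mirrors the paper's: reduce to the idealized chain via the coupling of Section~\ref{sec:coupling}, establish vertex-wise convergence using the Taylor expansions of the geometric formulas from Lemma~\ref{lem:lessommetsretrouves}, and handle the post-coupling tail separately. Steps~1 and~2 are sound and correspond respectively to the setup around the event $E^{(\la)}_{\mbox{\tiny{good}}}\cap E^{(\la)}_{\mbox{\tiny{coupl}}}\cap E^{(\la)}_{\mbox{\tiny{init}}}$ and to Lemma~\ref{lem:aimerjusqualimpossible}. You also correctly note that the final Hausdorff statement follows from uniform convergence of the vertex sequences (the paper additionally invokes continuity of $\conv$, but this is routine).

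The gap is in Step~3, specifically in the treatment of the prelimit chain after $\tau_\la(\eps_0)$. Your ``direct a priori estimate'' claims the post-$\tau_\la$ portion of $F^{(\la)}(\cB^{(\la)}-Z_{\textsl c}^{(\la)})$ lies in a box of size $O(\la^{(\eps_0-1)/3})\times O(H_{\tau_\la}^{(\la)})$ solely because the support condition \eqref{eq:domainesautorisesbh} forces $B_n^{(\la)}$ to be decreasing and $H_n^{(\la)}$ to remain small. But $B_n^{(\la)}$ and $H_n^{(\la)}$ are the half-base and height of the $n$-th isosceles triangle, not the coordinates of $V_n^{(\la)}-Z_{\textsl c}^{(\la)}$. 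The circumcenter $V_n^{(\la)}$ sits at distance $\sqrt{\la^2 (H_n^{(\la)})^2+\la^{2/3}(B_n^{(\la)})^2}$ from $Z_{\textsl c}^{(\la)}$, but the horizontal component of that displacement depends on the \emph{orientation} of the segment $[Z_{\textsl c}^{(\la)},Z_n^{(\la)}]$, i.e.\ on the accumulated turning angles, which the pair $(B_n^{(\la)},H_n^{(\la)})$ does not encode. Without controlling those angles, the renormalized horizontal coordinate could be of order $\la^{2/3}H_n^{(\la)}$, which need not be $O(\la^{(\eps_0-1)/3})$. Controlling the angles amounts to redoing the full vertex-wise analysis, defeating the point of a short a priori bound.

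The paper's Lemma~\ref{lem:lesortdespointsaprescouplage} resolves this differently and you should too: both $\{V_n^{\mathbf l}\}_n$ and $\{V_n^{(\la)}\}_n$ are \emph{convex chains} (they lie on the boundary of a convex set), so it suffices to show that the single vertex at time $\tau_\la(\eps_0)$ is small — first for the limit ($\|V_{\tau_\la}^{\mathbf l}\|\to 0$ in probability, proved via \eqref{eq:goodboundH_ninfty} for the $y$-coordinate and the Abel-summed expression \eqref{eq:Abel2} combined with the Gumbel control on $\sup_k\xi_k$ for the $x$-coordinate) and then for the prelimit by triangle inequality with Lemma~\ref{lem:aimerjusqualimpossible}. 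Convexity then traps the whole post-$\tau_\la$ portion near the origin without any per-step estimate. This convexity step is the missing ingredient in your proposal, and without it the tail control does not close.
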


\begin{proof} 
The proof is essentially a consequence of the convergence of the sequence of vertices of the branch before and after the coupling time $\tau_\la(\eps_0)$. Indeed, Lemmas \ref{lem:aimerjusqualimpossible} and \ref{lem:lesortdespointsaprescouplage} show the existence of a realization of $\{V_n^{\mathbf l}\}_n$ such that there is uniform convergence in probability of the sequence of renormalized vertices after translation $\{F^{(\la)}(V_n^{(\la)}-Z_{\textsl c}^{(\la)})\}_n$ to the sequence $\{V_n^{\mathbf l}\}_n$, i.e.
\begin{equation*}
\max_{n\ge 0}\|F^{(\la)}(V_n^{(\la)}-Z_{\textsl c}^{(\la)})-V_n^{\mathbf l}\|\overset{\P}{\longrightarrow} 0.
\end{equation*}
Let us notice that $\cB_{\mathbf l}$ is almost surely a compact set. This is due to the fact that $V_n^{\mathbf l}\longrightarrow 0$ almost surely when $n\to \infty$. Hence, by continuity of the function $\conv: \cK\longrightarrow\cK$ (see e.g. \cite[Theorem 12.3.5]{SW08}) the convergence in distribution of 
$F^{(\la)}(\cB^{(\la)}-Z_{\textsl c}^{(\la)})$ 
to $\cB_{\mathbf l}$ is a consequence of the convergence in distribution of the random compact set $\{F^{(\la)}(V_n^{(\la)}-Z_{\textsl c}^{(\la)})\}_n$ to the closure of $\{V_n^{\mathbf l}\}_n$.
\end{proof}

\noi We recall that Lemma \ref{lem:lessommetsretrouves} provides exact formulas for the distance between two consecutive vertices from the sequence $\{V_n^{(\la)}\}_n$ and the angle between two consecutive pairs of such vertices as functions of $\{(B_n^{(\la)},H_n^{(\la)})\}_n$. In Lemma \ref{lem:lessommetsretrouves2} we use these formulas to deduce asymptotic estimates that pave the way for future Taylor expansions done in Lemma \ref{lem:aimerjusqualimpossible}.

\begin{lem}\label{lem:lessommetsretrouves2}
Let $\eps_0=\frac13+\eps_0'$ where $\eps_0'\in (0,\frac23)$ and let $\eps_1,\eps_2>0$ be small enough. On $E^{(\la)}_{\emph{\tiny{good}}}(\eps_0,\eps_1,\eps_2)\cap E^{(\la)}_{\emph{\tiny{coupl}}}(\eps_0)$, there exists $\delta>0$ such that for $\la$ large enough and $n\le \tau_\la(\eps_0)$,
\begin{align}\label{eq:normesapprox}
0\le \la^{-1}\|V_n^{(\la)}-V_{n+1}^{(\la)}\|-(H_n^{(\infty)}-H_{n+1}^{(\infty)})\le \la^{-\frac{10}{9}-\delta}
\end{align}
and
\begin{align}\label{eq:anglesapprox}
\bigg|\la^{\frac23}\angle (V_n^{(\la)}V_{n+1}^{(\la)}V_{n+2}^{(\la)})-\frac{B_n^{(\infty)}-{B_{n+1}^{(\infty)}}}{{H_{n+1}^{(\infty)}}}\bigg| \le \la^{-\delta}.
\end{align}
\end{lem}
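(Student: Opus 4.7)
On $E^{(\la)}_{\text{coupl}}(\eps_0)$, we have $(B_n^{(\la)},H_n^{(\la)})=(B_n^{(\infty)},H_n^{(\infty)})$ for all $n\le \tau_\la(\eps_0)$, so we may inject the exact identities of Lemma \ref{lem:lessommetsretrouves} with the idealized chain in place of the original one. The estimates \eqref{eq:normesapprox} and \eqref{eq:anglesapprox} will then follow from Taylor expansions of the square root and of $\arcsin$ / $\arctan$, with the remainders controlled by the three inequalities defining $E^{(\la)}_{\text{good}}$: namely (C0) $B_n^{(\infty)}\ge \la^{(\eps_0-1)/3}$ up to $\tau_\la(\eps_0)$, (C1) $(B_n^{(\infty)})^3\le \la^{\eps_1}H_n^{(\infty)}$, and (C2) $B_{n+1}^{(\infty)}\ge \la^{-\eps_2}B_n^{(\infty)}$.

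For \eqref{eq:normesapprox}, I would write the right-hand side of \eqref{eq:dist2somments} as $H_n^{(\infty)}-H_{n+1}^{(\infty)}\sqrt{1-u_n}$ with $u_n=\la^{-4/3}((B_n^{(\infty)})^2-(B_{n+1}^{(\infty)})^2)/(H_{n+1}^{(\infty)})^2\ge 0$. A first-order expansion $\sqrt{1-u_n}=1-\frac{u_n}{2}+O(u_n^2)$ produces
\begin{equation*}
\la^{-1}\|V_n^{(\la)}-V_{n+1}^{(\la)}\|-(H_n^{(\infty)}-H_{n+1}^{(\infty)})=\frac{H_{n+1}^{(\infty)}}{2}u_n+O(H_{n+1}^{(\infty)}u_n^2),
\end{equation*}
so the key is to upper-bound $H_{n+1}^{(\infty)}u_n\le \la^{-4/3}(B_n^{(\infty)})^2/H_{n+1}^{(\infty)}$ by $\la^{-10/9-\delta}$. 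Applying successively (C1) to replace $1/H_{n+1}^{(\infty)}$ by $\la^{\eps_1}/(B_{n+1}^{(\infty)})^3$, (C2) to bound $(B_{n+1}^{(\infty)})^{-3}$ by $\la^{3\eps_2}(B_n^{(\infty)})^{-3}$, and (C0) to bound $1/B_n^{(\infty)}$ by $\la^{(1-\eps_0)/3}$, one gets an exponent $-4/3+\eps_1+3\eps_2+(1-\eps_0)/3$; with $\eps_0=\frac13+\eps_0'$ this becomes $-10/9+\eps_1+3\eps_2-\eps_0'/3$, yielding the claimed bound $\la^{-10/9-\delta}$ provided $\eps_1,\eps_2$ are taken small enough that $\delta:=\eps_0'/3-\eps_1-3\eps_2>0$. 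The second-order remainder $H_{n+1}^{(\infty)}u_n^2$ is smaller by another factor of $u_n$, hence harmless.

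For \eqref{eq:anglesapprox}, factoring out $\la^{-2/3}$ and using $\arcsin(x)=x+O(x^3)$, $\arctan(y)=y+O(y^3)$, together with $(1+v)^{-1/2}=1-\frac12 v+O(v^2)$ applied to $v=\la^{-4/3}(B_{n+1}^{(\infty)}/H_{n+1}^{(\infty)})^2$, the leading terms collapse to $(B_n^{(\infty)}-B_{n+1}^{(\infty)})/H_{n+1}^{(\infty)}$. The error is then dominated by the quantities $\la^{-2/3}B_n^{(\infty)}(B_{n+1}^{(\infty)}/H_{n+1}^{(\infty)})^2/H_{n+1}^{(\infty)}$ and $(\la^{-2/3}B_n^{(\infty)}/H_{n+1}^{(\infty)})^3$, both of which, by the same cascade of (C0)--(C2), are bounded by a negative power of $\la$; I would verify that $\la^{-\delta}$ (for possibly smaller $\delta>0$) is sufficient.

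The main obstacle lies in bookkeeping the exponents so that the specific scale $-10/9$ in \eqref{eq:normesapprox} really drops out; the substitution $\eps_0=\frac13+\eps_0'$ in the statement is precisely what makes this work, since $-4/3+(1-\eps_0)/3=-4/3+2/9-\eps_0'/3=-10/9-\eps_0'/3$. Once this is unwound the expansions are routine and uniform in $n\le \tau_\la(\eps_0)$, which is itself bounded by $\log\la$ on $E^{(\la)}_{\text{good}}$.
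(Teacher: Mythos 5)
Your overall strategy mirrors the paper's: on $E^{(\la)}_{\text{coupl}}(\eps_0)$ the two chains coincide up to $\tau_\la(\eps_0)$, so the exact geometric identities of Lemma \ref{lem:lessommetsretrouves} may be applied with $\{(B_n^{(\infty)},H_n^{(\infty)})\}_n$, and the remainders are controlled via the three conditions (C0), (C1), (C2) guaranteed on $E^{(\la)}_{\text{good}}$. Your treatment of \eqref{eq:normesapprox} is correct. (The paper bypasses the second-order Taylor expansion by simply using $1-\sqrt{1-u}\le u$ for $u\in[0,1]$, which immediately gives the bound $\la^{-\frac43}(B_n^{(\infty)})^2/H_{n+1}^{(\infty)}$; the discrepancy between your $\delta=\eps_0'/3-\eps_1-3\eps_2$ and the paper's $\eps_0'/3-\eps_1-2\eps_2$ comes from applying (C2) three times rather than twice, and is immaterial since $\eps_1,\eps_2$ are taken small.)

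For \eqref{eq:anglesapprox}, however, the two explicit error quantities you write down carry the wrong powers of $\la$, and this matters because the verification you deferred would not go through as stated. After the $\la^{\frac23}$ normalization, the contribution from the $(1+v)^{-1/2}$ expansion is $\tfrac12\,\frac{B_n^{(\infty)}}{H_{n+1}^{(\infty)}}v$ with $v=\la^{-\frac43}(B_{n+1}^{(\infty)}/H_{n+1}^{(\infty)})^2$, i.e. it carries a factor $\la^{-\frac43}$, not $\la^{-\frac23}$; and the cubic remainders of $\arcsin$ and $\arctan$ contribute $\la^{\frac23}\cdot O\big((\la^{-\frac23}B_n^{(\infty)}/H_{n+1}^{(\infty)})^3\big)=O\big(\la^{-\frac43}(B_n^{(\infty)}/H_{n+1}^{(\infty)})^3\big)$, so the $\la^{\frac23}$ prefactor should not be dropped. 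Both error sources are in fact of the same order $\la^{-\frac43}(B_n^{(\infty)}/H_{n+1}^{(\infty)})^3$. With your $\la^{-\frac23}$ prefactor the cascade (C0), (C1), (C2) would yield a bound of order $\la^{\frac23-3\delta}$, which diverges for the small admissible $\delta$; with the correct $\la^{-\frac43}$ it gives $\la^{-3\delta}\le\la^{-\delta}$, because the same chain of inequalities you used for \eqref{eq:normesapprox} shows $B_n^{(\infty)}/H_{n+1}^{(\infty)}\le\la^{\frac49-\delta}$ on the good event (this is exactly the paper's estimate \eqref{eq:petitsauveur}). Once the exponents are repaired the argument is complete and coincides with the paper's.
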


\begin{proof}
On the event $E^{(\la)}_{\mbox{\tiny{good}}}(\eps_0,\eps_1,\eps_2)\cap  E^{(\la)}_{\mbox{\tiny{coupl}}}(\eps_0)$, the sequence $\{(B_n^{(\infty)},H_n^{(\infty)})\}_n$ coincides with the Markov chain $\{(B_n^{(\la)},H_n^{(\la)})\}_n$ up to time $\tau_\la(\eps_0)$. This explains why in the lines below we systematically replace $(B_n^{(\la)},H_n^{(\la)})$ with $(B_n^{(\infty)},H_n^{(\infty)})$. 
Thanks to Lemma \ref{lem:lessommetsretrouves}, we get
\begin{equation*}
(H_n^{(\infty)}-H_{n+1}^{(\infty)})\le \la^{-1}\|V_n^{(\la)}-V_{n+1}^{(\la)}\| \le (H_n^{(\infty)}-H_{n+1}^{(\infty)}) 
+ \la^{-\frac43} \frac{({B_n^{(\infty)}})^2}{H_{n+1}^{(\infty)}}.
\end{equation*}

\noi On $E_{\mbox{\tiny{good}}}^{(\la)}(\eps_0,\eps_1,\eps_2)$, we obtain that
\begin{equation*}
\la^{-\frac43}\frac{({B_n^{(\infty)}})^2}{{H_{n+1}^{(\infty)}}}
\le\la^{-\frac43}\frac{\la^{\eps_1}(B_n^{(\infty)})^2}{(B_{n+1}^{(\infty)})^3}
\le\la^{-\frac43}\frac{\la^{\eps_1+2\eps_2}}{B_{n+1}^{(\infty)}}
\le \la^{-1+\eps_1+2\eps_2-\frac{\eps_0}{3}}
\end{equation*}
which completes the proof of \eqref{eq:normesapprox} with the choice $\delta=\frac{\eps_0'}{3}-\eps_1-2\eps_2>0$ for well-chosen $\eps_0',\eps_1,\eps_2$.

\pass We now prove \eqref{eq:anglesapprox}. Using \eqref{eq:angle2somments} combined with $x\le \arcsin(x)\le x+\frac{x^3}{6}$ for $x\in [0,1]$ and $x-\frac{x^3}{3}\le \arctan(x)\le x$ for $x\ge 0$, we get
\begin{align}\label{eq:majoangle}
\la^{\frac23}\angle (V_n^{(\la)}V_{n+1}^{(\la)}V_{n+2}^{(\la)})-\frac{B_n^{(\infty)}-{B_{n+1}^{(\infty)}}}{{H_{n+1}^{(\infty)}}} 
& \le \la^{-\frac43}\bigg(\frac13\bigg(\frac{B_{n+1}^{(\infty)}}{H_{n+1}^{(\infty)}}\bigg)^3+\frac16\bigg(\frac{B_{n+1}^{(\infty)}}{H_{n+1}^{(\infty)}}\bigg)^3\bigg) \nonumber \\
& \le \frac12 \la^{-\frac43}\bigg(\frac{B_{n+1}^{(\infty)}}{H_{n+1}^{(\infty)}}\bigg)^3 \nonumber \\
& \le \frac12 \la^{-\frac43}\bigg(\frac{{B_n}^{(\infty)}}{H_{n+1}^{(\infty)}}\bigg)^3,
\end{align}
the last inequality coming from the fact that $B_{n+1}^{(\infty)}\leq B_n^{(\infty)}$.

\noi Analogously, we prove the same upper bound for $\frac{B_n^{(\infty)}-B_{n+1}^{(\infty)}}{{H_{n+1}^{(\infty)}}}-\la^{\frac23}\angle (V_n^{(\la)}V_{n+1}^{(\la)}V_{n+2}^{(\la)})$.

\noi Finally, noticing that
\begin{align}\label{eq:petitsauveur}
\frac{B_n^{(\infty)}}{H_{n+1}^{(\infty)}} 
\le \frac1{(B_n^{(\infty)})^2}\frac{(B_n^{(\infty)})^3}{H_{n+1}^{(\infty)}}
\le\la^{\frac23-\frac23\eps_0+3\eps_2}\frac{(B_{n+1}^{(\infty)})^3}{H_{n+1}^{(\infty)}}\le\la^{\frac49-\delta}
\end{align}
where $\delta=\frac23\eps_0'-\eps_1-3\eps_2>0$ for $\eps_1,\eps_2>0$ small enough, we completes the proof of Lemma \ref{lem:lessommetsretrouves2} using \eqref{eq:majoangle}.
\end{proof}

\noi In order to show the convergence of $\{F^{(\la)}(V_n^{(\la)}-Z_{\textsl c}^{(\la)})\}_n$ to $\{V_n^{\mathbf l}\}_n$, we need to truncate the set of points. Consequently, we fix $\eps_0>0$ and consider $\tau_\la(\eps_0)$. In Lemma \ref{lem:aimerjusqualimpossible} below, we show the convergence point by point of the first part of the sequence $\{F^{(\la)}(V_n^{(\la)}-Z_{\textsl c}^{(\la)})\}_n$ up to the stopping time $\tau_\la(\eps_0)$.

\begin{lem}\label{lem:aimerjusqualimpossible}
Let $\eps_0\in(\frac13,1)$. There exists a realization of the sequence $\{V_n^{\mathbf l}\}_n$ such that when $\la\longrightarrow\infty$, we get 
\begin{equation*}
\max_{0 \le n\le \tau_\la(\eps_0)}\|F^{(\la)}(V_n^{(\la)}-Z_{\textsl c}^{(\la)})-V_n^{\mathbf l}\|\overset{\P}{\longrightarrow} 0.
\end{equation*}
\end{lem}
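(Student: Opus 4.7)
The plan is to build a single realization of the menhir vertices $\{V_n^{\mathbf l}\}_n$ directly from the Markov chain coupling of Section \ref{sec:coupling}, and then carry out a vertex-by-vertex Taylor expansion controlled by Lemma \ref{lem:lessommetsretrouves2}. First I would combine two couplings. On the one hand, Proposition \ref{prop:simuquadruplet} gives convergence in total variation of the initial quadruplet $(R^{(\la)},\Theta_{\textsl l}^{(\la)},\Theta_{\textsl c}^{(\la)},\Theta_{\textsl r}^{(\la)})$ to $(R,\Theta_{\textsl l},\Theta_{\textsl c},\Theta_{\textsl r})$, so one can assume the two quadruplets coincide with probability tending to one. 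On the other hand, on $E_{\mbox{\tiny{good}}}^{(\la)} \cap E_{\mbox{\tiny{coupl}}}^{(\la)}$, whose probability tends to one by Lemma \ref{lem:goodevent} and Proposition \ref{prop:evenpascouplage}, the chains $\{(B_n^{(\la)},H_n^{(\la)})\}_n$ and $\{(B_n^{(\infty)},H_n^{(\infty)})\}_n$ agree for all $n\le\tau_\la(\eps_0)$, with $\tau_\la(\eps_0)\le\log\la$. On this event I would define $\{V_n^{\mathbf l}\}_n$ via the description of Section \ref{sec:intro} using the shared chain and the coupled angles $(\Theta_{\textsl l},\Theta_{\textsl c})$, so that $(B_n^{\mathbf l},H_n^{\mathbf l})=(B_n^{(\infty)},H_n^{(\infty)})$ identically.

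For the second coordinate of $F^{(\la)}(V_n^{(\la)}-Z_{\textsl c}^{(\la)})$, I would exploit that the basis of the $n$-th isosceles triangle, the segment $[Z_{\textsl c}^{(\la)},Z_n^{(\la)}]$, is nearly horizontal, its tilt being of order $\la^{-\frac23}$. Hence the vertical distance $(V_n^{(\la)})_2-(Z_{\textsl c}^{(\la)})_2$ equals $\la H_n^{(\la)}$ up to a multiplicative factor $\cos(O(\la^{-\frac23}))$ and a small contribution coming from the projection of $V_n^{(\la)}$ on that line. After renormalization and a telescopic summation based on \eqref{eq:normesapprox}, this yields $|\tilde Y_n^{(\la)}-H_n^{(\infty)}|=O(\la^{-\delta}\log\la)$ uniformly in $n\le\tau_\la(\eps_0)$, for some $\delta>0$, where $\tilde Y_n^{(\la)}$ denotes the second coordinate of $F^{(\la)}(V_n^{(\la)}-Z_{\textsl c}^{(\la)})$.

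For the first coordinate, the idea is to iterate Lemma \ref{lem:lessommetsretrouves}. The direction of the segment $[V_n^{(\la)},V_{n+1}^{(\la)}]$ is obtained by adding to the initial direction, governed by $\frac12(\Theta_{\textsl l}^{(\la)}+\Theta_{\textsl c}^{(\la)})$, the angular increments given by \eqref{eq:angle2somments}. Projecting onto the horizontal axis, multiplying by $\la^{-\frac13}$, and using the first-order approximations \eqref{eq:normesapprox} and \eqref{eq:anglesapprox}, the horizontal displacement of $\tilde X_n^{(\la)}$ away from $\tilde X_0^{(\la)}=-\Theta_{\textsl c}^{(\la)}+o(1)$ can be reorganized as precisely the telescoping sum appearing on the right-hand side of \eqref{eq:defXnl}, up to a remainder which is the sum of at most $\tau_\la(\eps_0)\le\log\la$ single-step errors of size $O(\la^{-\delta})$. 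This gives the desired uniform convergence of $\tilde X_n^{(\la)}$ to $X_n^{\mathbf l}$.

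The main obstacle will be to organize the iterated first-order expansions so that the telescoping of \eqref{eq:defXnl} emerges cleanly, matching the double bookkeeping on the heights $H_k^{(\infty)}$ and on the half-bases $B_k^{(\infty)}$; in particular the cross-terms combining the length estimate \eqref{eq:normesapprox} with the angle estimate \eqref{eq:anglesapprox} must be shown to contribute $O(\la^{-\delta})$ per step. Once this is in place, the key ingredient is the logarithmic control $\tau_\la(\eps_0)\le\log\la$ from Lemma \ref{lem:taulainf}, which ensures that the polynomial decay $\la^{-\delta}$ of each single-step error absorbs the linear-in-$n$ accumulation and yields the claimed convergence in probability.
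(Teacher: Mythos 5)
The overall structure of your argument — intersect the three good events $E_{\mbox{\tiny{init}}}^{(\la)}$, $E_{\mbox{\tiny{good}}}^{(\la)}$, $E_{\mbox{\tiny{coupl}}}^{(\la)}$, then Taylor-expand the vertex coordinates via Lemma \ref{lem:lessommetsretrouves2} and sum $O(\log\la)$ per-step errors of size $O(\la^{-\delta})$ — is indeed the route the paper takes, and your treatment of the two coordinates (telescoping the height increments for $\langle e_2,\cdot\rangle$, accumulating angular increments and Abel-summing for $\langle e_1,\cdot\rangle$) is the right computation.

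However, there is a genuine gap in the sentence where you set $(B_n^{\mathbf l},H_n^{\mathbf l})=(B_n^{(\infty)},H_n^{(\infty)})$ ``identically.'' This identity is false, even on $E_{\mbox{\tiny{init}}}^{(\la)}$. The chain $\{(B_n^{(\infty)},H_n^{(\infty)})\}_n$ is started, as specified just before \eqref{eq:deftaula}, from the \emph{finite}-$\la$ initial condition $(B_0^{(\la)},H_0^{(\la)})$ of \eqref{eq:B0H0}, which involves $\tan$ and $\cos$ evaluated at $\frac{\la^{-2/3}}{2}(\Theta_{\textsl c}-\Theta_{\textsl l})$ and in particular has $H_0^{(\infty)}\neq 1$. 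The menhir chain $\{(B_n^{\mathbf l},H_n^{\mathbf l})\}_n$ of Section \ref{sec:intro} is started from the \emph{limit} $(\tfrac12(\Theta_{\textsl c}-\Theta_{\textsl l}),1)$. Coupling the quadruplets makes these two starting points agree only up to $O(\la^{-4/3})$, not exactly, and the two chains then share the increments $\{\beta_n,\xi_n\}$ but not the state. Because the recursion \eqref{eq:BnHnrec} for the height is nonlinear, this initial discrepancy must be propagated through $\tau_\la(\eps_0)\asymp\tfrac25\log\la$ steps, and it is not a priori obvious that it stays $o(1)$. If you do not also build $\{V_n^{\mathbf l}\}_n$ from the genuinely $\la$-independent chain started at $(\tfrac12(\Theta_{\textsl c}-\Theta_{\textsl l}),1)$, your constructed sequence is not a realization of the limiting menhir vertices at all, and the lemma is not what you have proved.

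The paper handles exactly this with its ``last step,'' via the ratio identities \eqref{eq:rapportconstant}–\eqref{eq:ratioH} and the auxiliary shape characteristic $T_n=B_n^3/H_n$: since $B_n^{\mathbf l}/B_n^{(\infty)}$ is constant in $n$ and $T_n^{\mathbf l}-T_n^{(\infty)}$ decays as $\prod\beta_k^3$, one shows $|H_j^{(\infty)}/H_j^{\mathbf l}-1|=O(\la^{-\frac13-\eps_0})$ uniformly for $j\le\tau_\la(\eps_0)$, which then feeds into the Abel-summed expression \eqref{eq:apparitionrapports}. You need to add this stability-of-the-chain-in-its-initial-condition argument for the proof to close.
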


\begin{proof} Remembering that by Proposition \ref{prop:simuquadruplet}, the quadruplet $(R^{(\la)},\Theta_{\textsl l}^{(\la)},\Theta_{\textsl c}^{(\la)},\Theta_{\textsl r}^{(\la)})$ converges in total variation to $(R,\Theta_{\textsl l},\Theta_{\textsl c},\Theta_{\textsl r})$, we introduce the event 
\begin{equation}\label{eq:Einit}
E_{\mbox{\tiny{init}}}^{(\la)}=\big\{(R^{(\la)},\Theta_{\textsl l}^{(\la)},\Theta_{\textsl c}^{(\la)},\Theta_{\textsl r}^{(\la)})=(R,\Theta_{\textsl l},\Theta_{\textsl c},\Theta_{\textsl r})\big\}
\end{equation}
and claim that $\P\big[E_{\mbox{\tiny{init}}}^{(\la)}\big]\underset{\la\to\infty}{\longrightarrow}1$. In the whole proof, we assume that we are on the event $E^{(\la)}_{\mbox{\tiny{good}}}\cap E_{\mbox{\tiny{coupl}}}^{(\la)}\cap E_{\mbox{\tiny{init}}}^{(\la)}$ whose probability converges to $1$. The strategy is the following: we start by using Lemma \ref{lem:lessommetsretrouves2} to obtain a uniform asymptotic estimate on the two coordinates of $F^{(\la)}(V_n^{(\la)})$ in function of the Markov chain $\{(B_n^{(\infty)},H_n^{(\infty)})\}_n$. In a second step, we replace the initial distribution of the Markov chain by its limit distribution, i.e. we rewrite the previous asymptotic estimate in function of $\{(B_n^{\mathbf l},H_n^{\mathbf l})\}_n$. Finally, we find the limits of the coordinates of $F^{(\la)}(Z_{\textsl c}^{(\la)})$ and substract those to the previous asymptotic estimate.

\pass Let us fix $0\le n\le \tau_\la(\eps_0)$. Our first step consists in rewriting the two coordinates of $F^{(\la)}(V_n^{(\la)})$ in function of the sequences $\{\|V_{n+1}^{(\la)}-V_n^{(\la)}\|\}_n$ and $\{\angle (V_n^{(\la)}V_{n+1}^{(\la)}V_{n+2}^{(\la)})\}_n$ as well as the angles $\Theta_{\textsl l}^{(\la)}$ and $\Theta_{\textsl c}^{(\la)}$.

\pass We start with the second coordinate of $F^{(\la)}(V_n^{(\la)})$. Writing
\begin{equation}\label{eq:VnfonctionFn}
V_n^{(\la)}=V_0^{(\la)}+\sum_{k=0}^{n-1}(V_{k+1}^{(\la)}-V_k^{(\la)})
\end{equation}
we obtain
\begin{align}\label{eq:2emecoordV_n}
\la^{-1}\langle e_2,V_n^{(\la)}\rangle
= 1+\la^{-1}\sum_{k=0}^{n-1}\cos\big(\angle (e_2,V_{k+1}^{(\la)}-V_k^{(\la)})\big)\|V_{k+1}^{(\la)}-V_k^{(\la)}\|.
\end{align}

\noi Thanks to \eqref{eq:anglesapprox} and \eqref{eq:petitsauveur}, we obtain uniformly in $k$ between $0$ and $\tau_\la(\eps_0)$ and for some $\delta>0$
\begin{align}\label{eq:decompanglesubtile}
\angle (e_2,V_{k+1}^{(\la)}-V_k^{(\la)}) 
& = \angle (e_2,V^{(\la)}_1-V^{(\la)}_0) + \sum_{j=1}^k \angle (V^{(\la)}_{j-1}V^{(\la)}_jV^{(\la)}_{j+1}) \nonumber \\
& = \mfrac{\la^{-\frac23}}2(\Theta_{\textsl l}^{(\la)}+\Theta_{\textsl c}^{(\la)})+\la^{-\frac23}\sum_{j=1}^k \bigg( \frac{B^{(\la)}_{j-1}-B^{(\la)}_j}{H^{(\la)}_j}+\mathrm{O}(\la^{-\delta}) \bigg) \nonumber \\
& =\mfrac{\la^{-\frac23}}2(\Theta_{\textsl l}+\Theta_{\textsl c})+\la^{-\frac23}\sum_{j=1}^k \bigg( \frac{B^{(\la)}_{j-1}-B^{(\la)}_j}{H^{(\la)}_j}\bigg)+\mathrm{O}(\la^{-\frac23-\delta})\nonumber\\
& = \mfrac{\la^{-\frac23}}2(\Theta_{\textsl l}+\Theta_{\textsl c}) +\mathrm{O}(\la^{-\frac29-\delta}\log\la)\nonumber\\ 
& = \mathrm{O}(\la^{-\frac29-\delta}\log\la).
\end{align}
Consequently, combining \eqref{eq:normesapprox} and \eqref{eq:decompanglesubtile}, we obtain
\begin{align}\label{eq:2emecoordDAinterm}
\la^{-1}\langle e_2,V_n^{(\la)}\rangle
= 1 + \sum_{k=0}^{n-1}(H_{k+1}^{(\infty)}-H_k^{(\infty)})+\mathrm{O}(\la^{-\delta}\log^3\la) = 1 - H_0^{(\infty)} + H_n^{(\infty)} + \mathrm{o}(1)
\end{align}
where $\mathrm{o}(1)$ is uniform with respect to $n\le \tau_\la(\eps_0)$.

\pass We now deal with the first coordinate of $F^{(\la)}(V_n^{(\la)})$. Using \eqref{eq:VnfonctionFn}, \eqref{eq:normesapprox} and \eqref{eq:decompanglesubtile} we get, for some $\delta>0$,
\begin{align}\label{eq:1erecoordDA}
& \la^{-\frac13} \langle e_1, V_n^{(\la)}\rangle 
= \la^{-\frac13}\sum_{k=0}^{n-1}\sin\big(\angle (e_2,V_{k+1}^{(\la)}-V_k^{(\la)})\big)\|V_{k+1}^{(\la)}-V_k^{(\la)}\| \nonumber\\
& = \sum_{k=0}^{n-1}\bigg(\mfrac12(\Theta_{\textsl l}+\Theta_{\textsl c}) + \sum_{j=0}^{k-1}\bigg(\frac{B_j^{(\infty)}-B_{j+1}^{(\infty)}}{H_{j+1}^{(\infty)}}+\mathrm{O}(\la^{-\delta}\log\la)\bigg)\bigg)\bigg(H_k^{(\infty)}-H_{k+1}^{(\infty)}+\mathrm{O}(\la^{-\frac{10}{9}-\delta})\bigg) \nonumber\\
& = \mfrac12(\Theta_{\textsl l}+\Theta_{\textsl c})(H_0^{(\infty)}-H_n^{(\infty)}) + \sum_{j=0}^{n-2}\bigg(\bigg(\sum_{k=j+1}^{n-1}H_k^{(\infty)}-H_{k+1}^{(\infty)}\bigg)\frac{B_j^{(\infty)}-B_{j+1}^{(\infty)}}{H_{j+1}^{(\infty)}} \bigg) +\mathrm{o}(1)\nonumber\\
& = \mfrac12(\Theta_{\textsl l}+\Theta_{\textsl c})(H_0^{(\infty)}-H_n^{(\infty)}) +\sum_{j=0}^{n-2}\big(H_{j+1}^{(\infty)}-H_n^{(\infty)}\big)\frac{B_j^{(\infty)}-B_{j+1}^{(\infty)}}{H_{j+1}^{(\infty)}}+\mathrm{o}(1) \nonumber \\
& = \mfrac12(\Theta_{\textsl l} + \Theta_{\textsl c})(H_0^{(\infty)}-H_n^{(\infty)})+ \big(B_0^{(\infty)}-B_n^{(\infty)}\big) - H_n^{(\infty)}\sum_{j=0}^{n-1}\frac{B_j^{(\infty)}-B_{j+1}^{(\infty)}}{H_{j+1}^{(\infty)}}+\mathrm{o}(1). 
\end{align}

\noi The last step of the proof consists in replacing the Markov chain $\{(B_n^{(\infty)},H_n^{(\infty)})\}_n$ with initial distribution given at \eqref{eq:B0H0} by the Markov chain $\{(B_n^{\mathbf l},H_n^{\mathbf l})\}_n$ defined in part 2 on page \pageref{descBH}. Both Markov chains have same transition probability but different initial distribution and we start by studying the difference between the two initial distributions. To do so, we expand asymptotically the expression of $(B_0^{(\la)},H_0^{(\la)})$ given at \eqref{eq:B0H0} to obtain, when $\la\longrightarrow\infty$,
\begin{align}\label{eq:DAB0H0}
(B_0^{(\la)},H_0^{(\la)}) 
= \big(\mfrac12(\Theta_{\textsl c}-\Theta_{\textsl l}),1\big)+\mathrm{O}(\la^{-\frac43})=(B_0^{\mathbf l},H_0^{\mathbf l})+\mathrm{O}(\la^{-\frac43}).
\end{align}

\noi We now show how to replace in the right-hand side of \eqref{eq:2emecoordDAinterm} and of \eqref{eq:1erecoordDA} all the terms from the sequence $\{(B_n^{(\infty)},H_n^{(\infty)})\}_n$ with the terms from $\{(B_n^{\mathbf l},H_n^{\mathbf l})\}_n$. We concentrate in particular on the final sum in the right-hand side in \eqref{eq:1erecoordDA}. Using \eqref{eq:BnHnrec} and \eqref{eq:BnHnlrec} we obtain, for every $j\ge 0$,
\begin{equation}\label{eq:rapportconstant}
\frac{B_j^{(\infty)}}{B_j^{\mathbf l}}=\frac{\beta_{j-1}\cdots\beta_0 B_0^{(\infty)}}{\beta_{j-1}\cdots\beta_0 B_0^{\mathbf l}}=\frac{B_0^{(\infty)}}{B_0^{\mathbf l}},   
\end{equation}
which implies that
\begin{align}\label{eq:apparitionrapports}
H_n^{(\infty)}\sum_{j=0}^{n-1}\frac{B_j^{(\infty)}-B_{j+1}^{(\infty)}}{H_{j+1}^{(\infty)}}
& = \bigg(\frac{B_0^{(\infty)}}{B_0^{\mathbf l}}\bigg)\bigg(\frac{H_n^{(\infty)}}{H_n^{\mathbf l}}\bigg)H_n^{\mathbf l}\sum_{j=0}^{n-1}\frac{B_j^{\mathbf l}-B_{j+1}^{\mathbf l}}{H_{j+1}^{\mathbf l}}\bigg(\frac{H_{j+1}^{\mathbf l}}{H_{j+1}^{(\infty)}}\bigg).
\end{align}

\noi Let us show that the ratios $\mfrac{B_0^{(\infty)}}{B_0^{\mathbf l}}$ and $\mfrac{H_j^{(\infty)}}{H_j^{\mathbf l}}$, $0\le j\le \tau_\la(\eps_0)$, are close to $1$. First, thanks to \eqref{eq:DAB0H0}, we obtain
\begin{equation}\label{eq:ratioB}
\frac{B_0^{(\infty)}}{B_0^{\mathbf l}}=1+\mathrm{O}(\la^{-\frac43}).    
\end{equation}

\noi Let us show that uniformly for every $0\le j\le \tau_\la(\eps_0)$,
\begin{equation}\label{eq:ratioH}
\frac{H_j^{(\infty)}}{H_j^{\mathbf l}}=1+\mathrm{O}(\la^{-\frac13-\eps_0}).    
\end{equation}

\noi In order to prove \eqref{eq:ratioH}, we use the intermediate sequence $\{T_n^{(\infty)}\}_n$ defined at \eqref{eq:defTninfty} and its corresponding sequence $\{T_n^{\mathbf l}\}_n$ where
\begin{equation}\label{eq:defTndur}
T_n^{\mathbf l}=\frac{(B_n^{\mathbf l})^3}{H_n^{\mathbf l}}.
\end{equation}
Thanks to \eqref{eq:Tnrec} and its analogue for $\{T_n^{\mathbf l}\}_n$ we get, for every $j\ge 0$,
\begin{equation}\label{eq:identityTj}
|T_j^{\mathbf l}-T_j^{(\infty)}|=\beta_{j-1}^3\cdots\beta_0^3|T_0^{\mathbf l}-T_0^{(\infty)}|\le |T_0^{\mathbf l}-T_0^{(\infty)}|.    
\end{equation}

\noi Thanks to \eqref{eq:DAB0H0}, we obtain
\begin{equation}\label{eq:erreurT0}
|T_0^{\mathbf l}-T_0^{(\infty)}| = \mathrm{O}(\la^{-\frac43}).
\end{equation}

\noi We deduce from \eqref{eq:identityTj} and \eqref{eq:erreurT0} that, uniformly for $0\le j\le \tau_\la(\eps_0)$,
\begin{align}\label{eq:erreurTdur}
\bigg|\frac{T_j^{\mathbf l}}{T_j^{(\infty)}}-1\bigg|
\le \frac{H_j^{(\infty)}}{(B_j^{(\infty)})^3}|T_0^{\mathbf l}-T_0^{(\infty)}|
\le \frac{H_0^{(\infty)}}{(\la^{\frac{\eps_0-1}{3}})^3}|T_0^{\mathbf l}-T_0^{(\infty)}|=\mathrm{O}(\la^{-\frac13-\eps_0}).    
\end{align}

\noi Noticing that for every $0\le j\le \tau_\la(\eps_0)$,
\begin{align}\label{eq:HjfnTj}
\frac{H_j^{(\infty)}}{H_j^{\mathbf l}}=\bigg(\frac{B_0^{(\infty)}}{B_0^{\mathbf l}}\bigg)^3\frac{T_j^{\mathbf l}}{T_j^{{(\infty)}}},    
\end{align}
we deduce from \eqref{eq:ratioB} and \eqref{eq:erreurTdur} the result \eqref{eq:ratioH}.

\pass Combining \eqref{eq:apparitionrapports} with \eqref{eq:ratioB} and \eqref{eq:ratioH}, we obtain 
\begin{equation}\label{eq:erreurbigsum}
H_n^{(\infty)}\sum_{j=0}^{n-1}\frac{B_j^{(\infty)}-B_{j+1}^{(\infty)}}{H_{j+1}^{(\infty)}} = H_n^{\mathbf l}\sum_{j=0}^{n-1}\frac{B_j^{\mathbf l}-B_{j+1}^{\mathbf l}}{H_{j+1}^{\mathbf l}}+\mathrm{o}(1).
\end{equation}

\noi The right-hand side of \eqref{eq:2emecoordDAinterm} and the first two terms in the right-hand side of \eqref{eq:1erecoordDA} can be treated analogously. Consequently, we obtain
\begin{align}\label{eq:limcoorde2}
\la^{-1}\langle e_2,V_n^{(\la)}\rangle =  H_n^{\mathbf l} + \mathrm{o}(1)=Y_n^{\mathbf l}+\mathrm{o}(1) \end{align}
and
\begin{align}\label{eq:limcoorde1}
\la^{-\frac13} \langle e_1, V_n^{(\la)}\rangle 
& = \mfrac12(\Theta_{\textsl{l}} 
+ \Theta_{\textsl{c}})(1-H_n^{\mathbf l})
+ (B_0^{\mathbf l}-B_n^{\mathbf l})-H_n^{\mathbf l}\sum_{j=0}^{n-1}\frac{B_j^{\mathbf l}-B_{j+1}^{\mathbf l}}{H_{j+1}^{\mathbf l}} + \mathrm{o}(1) \nonumber \\
& = X_n^{\mathbf l}+\Theta_{\textsl c}+\mathrm{o}(1)
\end{align}
where $X_n^{\mathbf l}$ has been defined at \eqref{eq:defXnl}.

\pass It remains to show that when $\la\longrightarrow\infty$, the coordinates of $F^{(\la)}(Z_{\textsl c}^{(\la)})$ converge to $(\Theta_{\textsl c},0)$. Indeed, we observe that, thanks to Proposition \ref{prop:simuquadruplet},
\begin{align}\label{eq:origineaffinisee}
F^{(\la)}(Z_{\textsl c}^{(\la)})
& = \Big(\la^{-\frac13}(\la+\la^{-\frac13}R^{(\la)})\sin(\la^{-\frac23}\Theta_{\textsl c}^{(\la)}),\la^{-1}(\la-(\la+\la^{-\frac13}R^{(\la)})\cos(\la^{-\frac23}\Theta_{\textsl c}^{(\la)}))\Big) \nonumber \\
& =(\Theta_{\textsl c},0)+\mathrm{O}(\la^{-\frac43}).
\end{align}

\noi Consequently, we deduce from \eqref{eq:limcoorde2}, \eqref{eq:limcoorde1} and \eqref{eq:origineaffinisee} that
\begin{equation*}
F^{(\la)}(V_n^{(\la)}-Z_{\textsl r}^{(\la)})=V_n^{\mathbf l}+\mathrm{o}(1). \end{equation*}

\noi This completes the proof.
\end{proof}

\noi In the next lemma, we show that all the points $V_n^{(\la)}$ after the coupling time $\tau_\la(\eps_0)$ converge to $Z_{\textsl r}^{(\la)}=Z_{\textsl r}$ when $\la\longrightarrow\infty$.

\begin{lem}\label{lem:lesortdespointsaprescouplage}
Let $\eps_0\in(\frac13,1)$. There exists a realization of the sequence $\{V_n^{\mathbf l}\}_n$ such that when $\la\longrightarrow\infty$,
\begin{equation*}
\sup_{n\ge \tau_\la(\eps_0)}\|V_n^{\mathbf l}\|\overset{\P}{\longrightarrow}0
\quad\text{and}\quad\sup_{n\ge \tau_\la(\eps_0)}\|F^{(\la)}(V_n^{(\la)}-Z_{\textsl r}^{(\la)})\|\overset{\P}{\longrightarrow}0.
\end{equation*}
\end{lem}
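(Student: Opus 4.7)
Proof plan. The lemma splits into two convergences: a tail estimate on the limiting sequence $\{V_n^{\mathbf l}\}$ and a tail estimate on the renormalized original sequence. I treat them separately, combining almost-sure convergence of the limiting sequence with $\tau_\la(\eps_0)\overset{\P}{\to}\infty$ (Lemma \ref{lem:taulainf}) for the first, and splicing Lemma \ref{lem:aimerjusqualimpossible} at the terminal time $\tau_\la(\eps_0)$ with a direct geometric estimate on the post-$\tau_\la(\eps_0)$ portion of the branch for the second. (I read the $Z_{\textsl r}^{(\la)}$ in the statement as a typographical error for $Z_{\textsl c}^{(\la)}$, matching Proposition \ref{prop:estquecestpossible} and Lemma \ref{lem:aimerjusqualimpossible}.)

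For $\sup_{n\ge \tau_\la(\eps_0)}\|V_n^{\mathbf l}\|\to 0$, it suffices to prove $V_n^{\mathbf l}\to 0$ almost surely. Three ingredients are combined: the analog of Corollary \ref{cor:logBn} for the $(\mathbf l)$-chain (same $\mathrm{Beta}(2,2)$ factors) gives $B_n^{\mathbf l}\to 0$ at exponential rate $5/6$; the recursion \eqref{eq:Tnrec} of Proposition \ref{prop:loiTn} carries over to $T_n^{\mathbf l}=(B_n^{\mathbf l})^3/H_n^{\mathbf l}$ and yields convergence in distribution (hence tightness), so $H_n^{\mathbf l}=(B_n^{\mathbf l})^3/T_n^{\mathbf l}\to 0$ a.s.; finally, the construction of $V_n^{\mathbf l}$ as the intersection of the segment from $V_{n-1}^{\mathbf l}$ through the target $-B_{n-1}^{\mathbf l}e_1$ with the horizontal line $y=H_n^{\mathbf l}$ gives the affine recursion
\begin{equation*}
X_n^{\mathbf l}=\mfrac{H_n^{\mathbf l}}{H_{n-1}^{\mathbf l}}X_{n-1}^{\mathbf l}-\Big(1-\mfrac{H_n^{\mathbf l}}{H_{n-1}^{\mathbf l}}\Big)B_{n-1}^{\mathbf l}.
\end{equation*}
Iterating from an arbitrary $k_0$ yields $|X_n^{\mathbf l}|\le (H_n^{\mathbf l}/H_{k_0}^{\mathbf l})|X_{k_0}^{\mathbf l}|+\sup_{k\ge k_0}B_{k-1}^{\mathbf l}$; the first term tends to $0$ as $n\to\infty$ for fixed $k_0$, and the second to $0$ as $k_0\to\infty$, so $X_n^{\mathbf l}\to 0$ a.s.

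For $\sup_{n\ge\tau_\la(\eps_0)}\|F^{(\la)}(V_n^{(\la)}-Z_{\textsl c}^{(\la)})\|\to 0$, the endpoint $n=\tau_\la(\eps_0)$ is handled by Lemma \ref{lem:aimerjusqualimpossible} together with the first part. For $n>\tau_\la(\eps_0)$, I decompose
\begin{equation*}
V_n^{(\la)}-Z_{\textsl c}^{(\la)}=\la^{\frac13}B_n^{(\la)}\hat u_n+\la H_n^{(\la)}\hat v_n,
\end{equation*}
where $\hat u_n$ is the unit vector from $Z_{\textsl c}^{(\la)}$ to $Z_n^{(\la)}$ and $\hat v_n$ its perpendicular on the $V_n^{(\la)}$ side. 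Parametrizing $\hat v_n=(\sin\theta_n,\cos\theta_n)$ with $\theta_n$ the angle of the bisector $[V_n^{(\la)},V_{n+1}^{(\la)}]$ to the upward vertical, the components of $F^{(\la)}(V_n^{(\la)}-Z_{\textsl c}^{(\la)})$ are, to leading order, $-B_n^{(\la)}+\la^{\frac23}H_n^{(\la)}\theta_n$ and $H_n^{(\la)}+\la^{-\frac23}B_n^{(\la)}\theta_n$. Monotonicity of $B_n^{(\la)}$ in the support \eqref{eq:domainesautorisesbh} gives $B_n^{(\la)}\le\la^{(\eps_0-1)/3}$ uniformly in the tail, and the $\la^{-4/3}(B_n^{(\la)})^2$ correction in the upper bound of $H_n^{(\la)}$ yields $H_n^{(\la)}\le(1+o(1))H_{\tau_\la(\eps_0)}^{(\la)}\lesssim\la^{\eps_0-1}$. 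Each angle increment from \eqref{eq:angle2somments} is $O(\la^{-2\eps_0/3})$ up to a tight factor ($T_n^{(\la)}$ remains tight), so summing over $O(\log\la)$ tail steps and combining with $\theta_{\tau_\la(\eps_0)}$ from \eqref{eq:decompanglesubtile} gives $|\theta_n|=O(\la^{-2\eps_0/3}\log^2\la)$. Plugging back, $\la^{\frac23}H_n^{(\la)}\theta_n=O(\la^{\eps_0/3-1/3}\log^2\la)=o(1)$ for $\eps_0<1$, and the other contributions vanish analogously.

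The main obstacle is controlling $\theta_n$ past the coupling time, since $(B_n^{(\la)},H_n^{(\la)})$ is no longer slaved to $(B_n^{(\infty)},H_n^{(\infty)})$ for $n>\tau_\la(\eps_0)$. The fix is twofold: the a priori monotonicity of $B_n^{(\la)}$ (and near-monotonicity of $H_n^{(\la)}$) in the exact support of $p^{(\la)}$ propagates the bounds from time $\tau_\la(\eps_0)$ forward with no cost; and the tail length is controlled by $O(\log\la)$ either by recoupling with an idealized chain started from $(B_{\tau_\la(\eps_0)}^{(\la)},H_{\tau_\la(\eps_0)}^{(\la)})$ up to a further stopping time, or, equivalently, by a direct cap-area estimate showing that a Voronoi cell of height $\la$ carries only logarithmically many edges on its boundary with high probability.
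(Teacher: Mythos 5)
You correctly flag the $Z_{\textsl r}^{(\la)}$ in the statement as a typo for $Z_{\textsl c}^{(\la)}$ (both Proposition~\ref{prop:estquecestpossible} and Lemma~\ref{lem:aimerjusqualimpossible} translate by $Z_{\textsl c}^{(\la)}$, and the paper's own proof slips between the two).

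For the first convergence your route is genuinely different from the paper's and is fine, with one small fix. You deduce $\sup_{n\ge\tau_\la(\eps_0)}\|V_n^{\mathbf l}\|\overset{\P}{\to}0$ from $V_n^{\mathbf l}\to 0$ a.s.\ plus $\tau_\la(\eps_0)\overset{\P}{\to}\infty$ (Lemma~\ref{lem:taulainf}), and the affine recursion for $X_n^{\mathbf l}$ you write, together with the convex-combination inequality $|X_n^{\mathbf l}|\le\max(|X_{k_0}^{\mathbf l}|,B_{k_0}^{\mathbf l})$, closes that cleanly. The only imprecision is the step ``$T_n^{\mathbf l}$ tight, so $H_n^{\mathbf l}\to 0$ a.s.'': tightness alone does not give this. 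You need to observe that $H_n^{\mathbf l}$ is monotone decreasing (clear from \eqref{eq:BnHnlrec}), hence has an a.s.\ limit $H_\infty^{\mathbf l}\ge 0$; if $H_\infty^{\mathbf l}>0$ on an event of positive probability then $T_n^{\mathbf l}\to 0$ there, which would force an atom at $0$ in the weak limit of Proposition~\ref{prop:loiTn} and contradict its absolute continuity. The paper, by contrast, proves the sharper quantitative bounds \eqref{eq:goodboundH_ninfty} and \eqref{eq:Abel2} on $(X^{\mathbf l}_{\tau_\la},Y^{\mathbf l}_{\tau_\la})$ and then invokes convexity of the chain to carry the bound to the tail; your argument is a bit lighter.

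For the second convergence there is a real gap, and it lies exactly where you say the ``main obstacle'' is. Past $\tau_\la(\eps_0)$ the chain $(B_n^{(\la)},H_n^{(\la)})$ evolves under the exact kernel $p^{(\la)}$ and is no longer coupled to the idealized chain, so the tightness of $T_n^{(\la)}$ and the lower bound $H_n^{(\la)}\gtrsim\la^{\eps_0-1}$ that your angle estimate $\theta_n=\mathrm{O}(\la^{-2\eps_0/3}\log^2\la)$ silently uses are precisely what is not available; without them the increment $\la^{-\frac23}(B^{(\la)}_n-B^{(\la)}_{n+1})/H^{(\la)}_{n+1}$ can be of order $1$, and the crude bound $\theta_n=\mathrm{O}(1)$ gives $\la^{\frac23}H_n^{(\la)}\theta_n=\mathrm{O}(\la^{\eps_0-\frac13})$, which does not vanish for $\eps_0\in(\frac13,1)$. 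Your two proposed fixes (recoupling from time $\tau_\la(\eps_0)$, or a logarithmic edge count) are sketches, not arguments; the edge count gives only a bound on the number of tail steps, not on $T_n^{(\la)}$ or $H_n^{(\la)}$. The paper sidesteps the angle analysis entirely: $V_{n+1}^{(\la)}$ lies on the segment from $V_n^{(\la)}$ to the midpoint $m_n=(Z_{\textsl c}^{(\la)}+Z_n^{(\la)})/2$, and since $F^{(\la)}$ is linear, $F^{(\la)}(V_{n+1}^{(\la)}-Z_{\textsl c}^{(\la)})$ is a convex combination of $F^{(\la)}(V_n^{(\la)}-Z_{\textsl c}^{(\la)})$ and $F^{(\la)}(m_n-Z_{\textsl c}^{(\la)})$, the latter having norm at most $\la^{-\frac13}\|m_n-Z_{\textsl c}^{(\la)}\|=B_n^{(\la)}\le B_{\tau_\la(\eps_0)}^{(\la)}<\la^{\frac{\eps_0-1}{3}}$ (monotonicity of $B^{(\la)}$ on its support $S_{(b,h)}$). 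By induction $\sup_{n\ge\tau_\la(\eps_0)}\|F^{(\la)}(V_n^{(\la)}-Z_{\textsl c}^{(\la)})\|\le\max(\|F^{(\la)}(V_{\tau_\la(\eps_0)}^{(\la)}-Z_{\textsl c}^{(\la)})\|,B_{\tau_\la(\eps_0)}^{(\la)})$, and the first term is small by Lemma~\ref{lem:aimerjusqualimpossible} together with the first convergence. That convex-combination observation (this is what ``both sequences generate convex chains, consequently it is enough to prove\ldots'' compresses) is the missing idea in your proposal.
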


\begin{proof} 
\noi We start by noticing that both sequences $\{V_n^{\mathbf l}\}_n$ and $\{V_n^{(\la)}\}_n$ generate convex chains. Consequently, it is enough to prove that $\|V_{\tau_\la(\eps_0)}^{\mathbf l}\|\overset{\P}{\longrightarrow}0$ and $\|F^{(\la)}(V_{\tau_\la(\eps_0)}^{(\la)}-Z_{\textsl r}^{(\la)})\|\overset{\P}{\longrightarrow}0$.

\noi Let us first concentrate on $V_{\tau_\la(\eps_0)}^{\mathbf l}$. We start with its second coordinate $Y_{\tau_\la(\eps_0)}^{\mathbf l}$. Thanks to the definitions of $T_n^{\mathbf l}$ and $\tau_\la(\eps_0)$ given at \eqref{eq:defTndur} and \eqref{eq:deftaula} respectively, we obtain
\begin{equation}\label{eq:boundH_ninfty}
Y_{\tau_\la(\eps_0)}^{\mathbf l}=H_{\tau_\la(\eps_0)}^{\mathbf l}=\frac{\big(B_{\tau_\la(\eps_0)}^{\mathbf l}\big)^3}
{T_{\tau_\la(\eps_0)}^{\mathbf l}}.
\end{equation}

\noi Using the definition of $\tau_\la(\eps_0)$, \eqref{eq:rapportconstant} and \eqref{eq:ratioB}, we obtain
\begin{equation}\label{eq:Bnlborneaussi}
B_{\tau_\la(\eps_0)}^{\mathbf l}\le c\la^{-\frac{1+\eps_0}{3}}.    
\end{equation}

\noi Thanks to \eqref{eq:Tnrec}, we get for every $n\ge 0$,
\begin{equation}\label{eq:minorationsimpleT_n}
T_{n}^{\mathbf l}\ge \mfrac32\beta_{n-1}^3\xi_{n-1},
\end{equation}
where we recall that $\{\beta_n\}_n$ and $\{\xi_n\}_n$ are two independent sequences of iid variables which are respectively $\mbox{Beta}(2,2)$ and $\mbox{Exp}(1)$ distributed.

\pass Consequently, we obtain for any $c>0$:
\begin{align}\label{eq:deviationgamma}
\P\big[T_{\tau_\la(\eps_0)}^{\mathbf l}\le (\log\la)^{-3}\big]
& \le  c(\log\la) \P\left[\mfrac32\beta_0^3\xi_0\le (\log\la)^{-3}\right] + \P\big[\tau_\la(\eps_0)> c\log\la\big] \nonumber \\
& \le c(\log\la)^{-\frac12} \E\left[(\mfrac32\beta_0^3\xi_0)^{-\frac12}\right] + \P\big[\tau_{\la}(\eps_0)> c\log\la\big].
\end{align}

\noi Since the second term on the right-hand side of \eqref{eq:deviationgamma} tends to 0 by Lemma \ref{lem:taulainf} for $c$ large enough, we combine \eqref{eq:boundH_ninfty}, \eqref{eq:Bnlborneaussi} and \eqref{eq:deviationgamma} to obtain with a probability going to $1$,
\begin{equation}\label{eq:goodboundH_ninfty}
Y_{\tau_\la(\eps_0)}^{\mathbf l}=H_{\tau_\la(\eps_0)}^{\mathbf l}\le \la^{-1+\eps_0}\log^3\la,
\end{equation}
This shows that $Y_{\tau_\la(\eps_0)}^{\mathbf l}\overset{\P}{\longrightarrow}0$.

\noi We now treat the first coordinate $X_{\tau_\la(\eps_0)}^{\mathbf l}$ of $V_{\tau_\la(\eps_0)}^{\mathbf l}$. Using successively \eqref{eq:defXnl}, \eqref{eq:relrecpratiquepourplustard} and the fact that $\{B_n^{\mathbf l}\}_n$ is decreasing, we obtain
\begin{align}\label{eq:Abel2}
|X_{\tau_\la(\eps_0)}^{\mathbf l}| & =\bigg|\mfrac12(\Theta_{\textsl l}+\Theta_{\textsl c})H_{\tau_\la(\eps_0)}^{\mathbf l}
+ B_{\tau_\la(\eps_0)}^{\mathbf l} + H_{\tau_\la(\eps_0)}^{\mathbf l}\sum_{k=0}^{{\tau_\la(\eps_0)}-1}\frac{B_k^{\mathbf l}-B_{k+1}^{\mathbf l}}{H_{k+1}^{\mathbf l}}\bigg| \nonumber \\
& =\bigg|\bigg(\mfrac12(\Theta_{\textsl l}+\Theta_{\textsl c})+\frac{B_0^{\mathbf l}}{H_1^{\mathbf l}}\bigg)H_{\tau_\la(\eps_0)}^{\mathbf l}+H_{\tau_\la(\eps_0)}^{\mathbf l}\sum_{k=1}^{{\tau_\la(\eps_0)}-1}B_k^{\mathbf l}\bigg(\frac1{H_{k+1}^{\mathbf l}}-\frac1{H_k^{\mathbf l}}\bigg)
 \bigg| \nonumber \\
& = \bigg|\bigg(\mfrac12(\Theta_{\textsl l}+\Theta_{\textsl c})+\frac{B_0^{\mathbf l}}{H_1^{\mathbf l}}\bigg)H_{\tau_\la(\eps_0)}^{\mathbf l}+ \frac32H_{\tau_\la(\eps_0)}^{\mathbf l}\sum_{k=1}^{{\tau_\la(\eps_0)}-1}\frac{\xi_k}{(B_k^{\mathbf l})^2}\bigg|\nonumber\\
&\le \bigg|\mfrac12(\Theta_{\textsl l}+\Theta_{\textsl c}) + \frac{B_0^{\mathbf l}}{H_1^{\mathbf l}}\bigg| H_{\tau_\la(\eps_0)}^{\mathbf l}
+ \mfrac32 \frac{B_{\tau_\la(\eps_0)}^{\mathbf l}}{T_{\tau_\la(\eps_0)}^{\mathbf l}} {\tau_\la(\eps_0)} \sup_{k\le {\tau_\la(\eps_0)}}\xi_k.
\end{align}

\noi We treat separately each term in the right-hand side of \eqref{eq:Abel2}. Thanks to \eqref{eq:goodboundH_ninfty}, we get
\begin{equation}\label{eq:1erstermescvprobanla}
\bigg|\mfrac12(\Theta_{\textsl l}+\Theta_{\textsl c})
+ \frac{B_0^{\mathbf l}}{H_1^{\mathbf l}}\bigg|H_{\tau_\la(\eps_0)}^{\mathbf l}
\overset{\P}{\longrightarrow}0.    
\end{equation}

\noi We recall that when $n\to \infty$, the sequence $\big\{\sup_{1\le k\le n}\xi_k -\log n\big\}_n$ converges in distribution to a Gumbel variable. Consequently, thanks to Lemma \ref{lem:taulainf} and the monotonicity of the sequence $\big\{\sup_{1\le k\le n}\xi_k\big\}_n$, we obtain that
\begin{equation}\label{eq:interm3emetermerhscasnla}
\lim_{\la\to\infty}\P\bigg[\tau_\la(\eps_0) \sup_{1\le k\le \tau_\la(\eps_0)}\xi_k\le 2\big(\mfrac25 \log\la\big)\log\big(\mfrac25\log\la\big)\bigg]=1.
\end{equation}

\noi Using \eqref{eq:Bnlborneaussi}, \eqref{eq:deviationgamma} and \eqref{eq:interm3emetermerhscasnla}, we get that for any $\eps_0'\in (\eps_0-\frac13,\frac23)$,
\begin{equation}\label{eq:3emetermenlacvP0}
\lim_{\la\to\infty}\P\bigg[\frac{B_{\tau_\la(\eps_0)}^{\mathbf l}}{T_{\tau_\la(\eps_0)}^{\mathbf l}}\tau_\la(\eps_0)\sup_{1\le k\le \tau_\la(\eps_0)}\xi_k\le\la^{-\frac29+\frac{\eps_0'}{3}}\bigg]=1.
\end{equation}
It remains to combine \eqref{eq:Abel2}, \eqref{eq:1erstermescvprobanla} and \eqref{eq:3emetermenlacvP0} to deduce that $X_{\tau_\la(\eps_0)}\overset{\P}{\longrightarrow}0$. This completes the proof of the first part of Lemma \ref{lem:lesortdespointsaprescouplage}.

\pass We now study $\|F^{(\la)}(V_{\tau_\la(\eps_0)}^{(\la)}-Z_{\textsl r}^{(\la)})\|$. Because of Lemma \ref{lem:aimerjusqualimpossible}, we get 
\begin{equation}\label{eq:fado}
\|F^{(\la)}(V_{\tau_\la(\eps_0)}^{(\la)}-Z_{\textsl r}^{(\la)})-V_{\tau_\la(\eps_0)}^{\mathbf l}\|\overset{\P}{\longrightarrow}0.
\end{equation}

\noi Consequently, since $\|V_{\tau_\la(\eps_0)}\|\overset{\P}{\longrightarrow}0$, we obtain 
\begin{equation}\label{eq:departratatinementVn}
\|F^{(\la)}(V_{\tau_\la(\eps_0)}-Z_{\textsl r}^{(\la)})\|\overset{\P}{\longrightarrow}0.
\end{equation}
This completes the proof. 
\end{proof}

\subsection{Proof of Theorem \ref{theo:menhir} (ii): local behavior at the origin}\label{sec:preuveTH2}

\pass 

\noi Let us first notice that 
\begin{equation}\label{eq:YnXnavecWn}
\frac{Y_n^{\mathbf l}}{(X_n^{\mathbf l})^3}=\frac{(T_n^{\mathbf l})^2}{(W_n^{\mathbf l})^3}    
\end{equation}
where the sequence $\{W_n^{\mathbf l}\}_n$ is defined by
\begin{equation*}
W_n^{\mathbf l}=\frac{(B_n^{\mathbf l})^2}{H_n^{\mathbf l}}X_n^{\mathbf l}.  
\end{equation*}

\noi We show that $\{W_n^{\mathbf l}\}_n$ satisfies the following recursion identity, analogue to \eqref{eq:Tnrec}:
\begin{equation}\label{eq:Wnrec}
W_{n+1}^{\mathbf l}=\beta_n^2(W_n^{\mathbf l}+\tfrac32\xi_n).    
\end{equation}

\noi Indeed, 
the recursion relation satisfied by $\{X_n^{\mathbf l}\}_n$ and stated in part 3 on page \pageref{old2}
and \eqref{eq:relrecpratiquepourplustard} imply that 
\begin{align*}
\frac{X_{n+1}^{\mathbf l}}{H_{n+1}^{\mathbf l}}-\frac{X_n^{\mathbf l}}{H_n^{\mathbf l}}
= B_n^{\mathbf l}\Big(\frac1{H_{n+1}^{\mathbf l}}-\frac1{H_n^{\mathbf l}}\Big) = \frac{\frac32\xi_n}{(B_n^{\mathbf l})^2}
\end{align*}
which is equivalent to \eqref{eq:Wnrec}. Proposition \ref{prop:unbusrapide} extends Proposition \ref{prop:loiTn} to the convergence in distribution of the couple $(W_n^{\mathbf l},T_n^{\mathbf l})$.

\begin{prop}\label{prop:unbusrapide}
When $n\to\infty$, the sequence $\{(W_n^{\mathbf l},T_n^{\mathbf l})\}_n$ converges in distribution to a non-degenerate distribution which does not depend on the value of $(W_0^{\mathbf l},T_0^{\mathbf l})$.
\end{prop}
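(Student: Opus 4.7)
The plan is to apply Letac's principle to the bivariate Markov chain $\{(W_n^{\mathbf l},T_n^{\mathbf l})\}_n$, in direct analogy with the one-dimensional argument used in the proof of Proposition \ref{prop:loiTn}. The key observation is that the recursions \eqref{eq:Tnrec} and \eqref{eq:Wnrec} are driven by the \emph{same} pair of independent sequences $\{\beta_n\}_n$ and $\{\xi_n\}_n$, so writing $G_k(w,t)=\bigl(\beta_k^2(w+\tfrac{3}{2}\xi_k),\,\beta_k^3(t+\tfrac{3}{2}\xi_k)\bigr)$, we have $(W_{n+1}^{\mathbf l},T_{n+1}^{\mathbf l})=G_n(W_n^{\mathbf l},T_n^{\mathbf l})$. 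The problem therefore reduces to the convergence of the iterates of a planar affine random map whose Lipschitz constant has mean less than one in both coordinates.

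First, I would introduce the \emph{backward} sequence $(W_n',T_n')=G_0\circ G_1\circ\cdots\circ G_{n-1}(W_0^{\mathbf l},T_0^{\mathbf l})$. Since $\{(\beta_k,\xi_k)\}_k$ are iid, this reordering preserves the one-time distribution, so $(W_n',T_n')\overset{(d)}{=}(W_n^{\mathbf l},T_n^{\mathbf l})$ for every $n$. A direct expansion, mirroring the computation in the proof of Proposition \ref{prop:loiTn}, yields
\begin{align*}
W_n' &= \beta_0^2\cdots\beta_{n-1}^2\,W_0^{\mathbf l} + \tfrac{3}{2}\sum_{k=0}^{n-1}\beta_0^2\cdots\beta_k^2\,\xi_k,\\
T_n' &= \beta_0^3\cdots\beta_{n-1}^3\,T_0^{\mathbf l} + \tfrac{3}{2}\sum_{k=0}^{n-1}\beta_0^3\cdots\beta_k^3\,\xi_k.
\end{align*}

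Next I would establish almost sure convergence of $(W_n',T_n')$. The two deterministic prefactors tend to $0$ a.s. by the law of large numbers applied to $\{\log\beta_k\}_k$, since $\E[\log\beta_0]<0$. For the two random series, the three-series theorem applies directly: using $\E[\beta_0^2]=\tfrac12$, $\E[\beta_0^3]=\tfrac{3}{10}$ and $\E[\xi_k]=1$, the general terms are integrable with summable expectations $2^{-(k+1)}$ and $(3/10)^{k+1}$ respectively. Therefore
\[
(W_n',T_n')\xrightarrow[n\to\infty]{\text{a.s.}}\bigl(W_\infty,T_\infty\bigr)\,:=\,\Bigl(\tfrac{3}{2}\sum_{k\geq0}\beta_0^2\cdots\beta_k^2\,\xi_k,\ \tfrac{3}{2}\sum_{k\geq0}\beta_0^3\cdots\beta_k^3\,\xi_k\Bigr).
\]
Transferring to the forward chain gives the desired convergence in distribution $(W_n^{\mathbf l},T_n^{\mathbf l})\overset{(d)}{\longrightarrow}(W_\infty,T_\infty)$.

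The explicit representation of the limit makes the final two claims transparent: the right-hand side depends only on the driving iid sequences and is entirely free of $(W_0^{\mathbf l},T_0^{\mathbf l})$, so the limiting law does not depend on the initial condition; and the limit is non-degenerate since the $T$-marginal already coincides with the non-degenerate distribution identified at \eqref{eq:loiTinfty} in Proposition \ref{prop:loiTn}. There is no serious obstacle here: the only point to verify with care is that using the \emph{same} pair $(\beta_k,\xi_k)$ in both coordinates of $G_k$ preserves the joint law under the backward reordering, which is automatic because $\{(\beta_k,\xi_k)\}_k$ is an iid sequence of $\R^2$-valued random variables.
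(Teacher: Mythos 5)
Your proposal is correct and takes essentially the same route as the paper: both apply Letac's principle to the bivariate chain by introducing the backward iterates driven by the same iid pairs $(\beta_k,\xi_k)$, observing the almost-sure convergence of the resulting random series, and transferring to the forward chain. One small arithmetic slip: from $\E[\beta_0^k]=\tfrac{6}{(k+2)(k+3)}$ one gets $\E[\beta_0^2]=\tfrac{3}{10}$ and $\E[\beta_0^3]=\tfrac{1}{5}$ (not $\tfrac12$ and $\tfrac{3}{10}$), but both are still strictly below $1$, so the summability argument and the conclusion are unaffected.
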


\begin{proof}
As for the convergence of $T_n^{\mathbf l}$, we use Letac's principle and show that for every $n\ge 0$, $(W_n^{\mathbf l},T_n^{\mathbf l})$ has same distribution as $(W_n,T_n)$ where $(W_0,T_0)=(W_0^{\mathbf l},T_0^{\mathbf l})$ and
\begin{equation*}
(W_n,T_n)=(\beta_1^2(\cdots (\beta_{n-1}^2(W_0+\tfrac32\xi_{n-1})+\cdots)+\tfrac32\xi_1),\beta_1^3(\cdots (\beta_{n-1}^3(T_0+\tfrac32\xi_{n-1})+\cdots)+\tfrac32\xi_1)).
\end{equation*}
Since $\{(W_n,T_n)\}_n$ converges almost surely to $\frac32\sum_{n=1}^{\infty}\xi_n\beta_1^2\cdots\beta_n^2(1,\beta_1\cdots\beta_n)$, we deduce that $\{(W_n^{\mathbf l},T_n^{\mathbf l})\}_n$ converges in distribution to the distribution of this random series.
\end{proof}

\pass Finally, combining \eqref{eq:YnXnavecWn} and Proposition \ref{prop:unbusrapide}, we complete the proof of Theorem \ref{theo:menhir} (ii).

\subsection{Proof of Theorem \ref{theo:menhir} (iii): total number of vertices}\label{sec:preuveTH3}

\pass 

\noi As in the proof of (i), Theorem \ref{theo:menhir} (iii) is a direct consequence of Proposition \ref{prop:cestpossible} below on the number of edges $\cN(\cB^{(\la)})$ of the left branch $\cB^{(\la)}$. 

\begin{prop}\label{prop:cestpossible}
When $\la\longrightarrow\infty$,
\begin{equation}\label{eq:limitlengthbranch}
\frac{\cN(\cB^{(\la)})}{\frac25\log\la}\overset{\P,L^1}{\longrightarrow} 1.
\end{equation}
\end{prop}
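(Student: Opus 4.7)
The strategy is to sandwich $\cN(\cB^{(\la)})$ between two quantities that both concentrate at $\tfrac{2}{5}\log\la$ by virtue of Lemma \ref{lem:taulainf}. Fix a small $\eps_0>0$ and work on the high-probability event $E_{\mbox{\tiny{coupl}}}^{(\la)}(\eps_0)\cap E_{\mbox{\tiny{good}}}^{(\la)}\cap E_{\mbox{\tiny{init}}}^{(\la)}$, on which the coupling forces $(B_n^{(\la)},H_n^{(\la)})=(B_n^{(\infty)},H_n^{(\infty)})$ for every $n\le\tau_\la(\eps_0)$, and Lemma \ref{lem:taulainf} places $\tau_\la(\eps_0)$ within $\eta\log\la$ of $\tfrac{2(1-\eps_0)}{5}\log\la$ for any $\eta>\tfrac{2}{5}\eps_0$.

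For the lower bound, the expansion leading to \eqref{eq:2emecoordDAinterm} together with the per-step estimates of Lemma \ref{lem:lessommetsretrouves2} yields, uniformly in $n\le\tau_\la(\eps_0)$,
\[
\la^{-1}\langle e_2,V_n^{(\la)}\rangle = H_n^{(\infty)}+\mathrm{O}(\la^{-10/9-\delta}\log^2\la).
\]
Since $B_n^{(\infty)}\ge\la^{(\eps_0-1)/3}$ on this range and $T_n^{(\infty)}$ is stochastically bounded by its limit in view of Proposition \ref{prop:loiTn}, we have $H_n^{(\infty)}=(B_n^{(\infty)})^3/T_n^{(\infty)}\ge\la^{\eps_0-1}/\log\la$ with probability tending to $1$, which strictly dominates the error. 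Hence $V_n^{(\la)}\in\bH$ for every $n\le\tau_\la(\eps_0)$, yielding $\cN(\cB^{(\la)})\ge\tau_\la(\eps_0)$.

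For the upper bound, one must extend the analysis past the coupling time. At $n=\tau_\la(\eps_0)$ the vertex $V_n^{(\la)}$ has height only of order $\la^{\eps_0}$, and the plan is to show that $\mathrm{O}(\eps_0\log\la)$ further steps suffice to drive $H_n^{(\la)}$ below the error threshold, thereby forcing $V_n^{(\la)}$ out of $\bH$. This can be made precise either by iterating the coupling construction of Section \ref{sec:coupling} from the state reached at time $\tau_\la(\eps_0)$ at the finer scale $\la^{\eps_0}$, or by a direct drift analysis of the kernel $p^{(\la)}$ showing that $\log B_n^{(\la)}$ retains a negative conditional drift of order $-\tfrac{5}{6}$ in the post-coupling regime. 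Combining with the lower bound and letting $\eps_0\to 0^+$ through Lemma \ref{lem:taulainf} yields $\cN(\cB^{(\la)})/(\tfrac{2}{5}\log\la)\to 1$ in probability.

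The $L^1$ convergence follows from uniform integrability of $\cN(\cB^{(\la)})/\log\la$, which I would deduce from a moment bound $\E[\cN(\cB^{(\la)})]=\mathrm{O}(\log\la)$ obtained by integrating the transition probability $p^{(\la)}$ along the chain via a Mecke-type formula and exploiting the exponential decay of the Poisson emptiness probability for the circumscribed discs. The main obstacle is the upper bound past the coupling time: without the clean $\mathrm{Beta}(2,2)\otimes\mathrm{Exp}(1)$ representation, a second coupling at the finer scale or a self-contained drift analysis of $p^{(\la)}$ is required to quantify the $\mathrm{O}(\eps_0\log\la)$ correction.
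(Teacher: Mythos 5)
You correctly set up the sandwich: compare $\cN(\cB^{(\la)})$ with $\tau_\la(\eps_0)$ and invoke Lemma \ref{lem:taulainf}. Your lower-bound sketch --- on $E_{\mbox{\tiny{coupl}}}^{(\la)}$ the vertex $V_n^{(\la)}$ stays in $\bH$ for every $n\le\tau_\la(\eps_0)$ because $\la H_n^{(\infty)}$, of order at least $\la^{\eps_0}$ up to polylogarithmic factors, dominates the error from Lemma \ref{lem:lessommetsretrouves2} --- is sensible. The genuine gap is exactly where you flag it: the upper bound on $\cN(\cB^{(\la)})-\tau_\la(\eps_0)$. You offer two candidate routes (a second coupling at scale $\la^{\eps_0}$, or a drift analysis of $\log B_n^{(\la)}$ under the kernel $p^{(\la)}$) but carry out neither, and both are heavier than what is needed. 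The paper dispatches this step with a fresh geometric observation, \eqref{eq:probaconfin}, that avoids any further coupling or analysis of $p^{(\la)}$: conditional on $\{\cN(\cB^{(\la)})>n\}$, the new nucleus $Z_n^{(\la)}$ falls past the bisector of $(Z_{n-1}^{(\la)},Z_{\textsl c}^{(\la)})$ with probability $\frac12$, and because the relevant circular arc is shorter than a half-circle when $V_n^{(\la)}\in\bH$, this event forces $B_n^{(\la)}\le\frac{1}{\sqrt2}B_{n-1}^{(\la)}$. Since $B_n^{(\la)}$ can never drop below half the distance $R_*$ from $Z_{\textsl c}^{(\la)}$ to its nearest Poisson neighbor, the number of such halvings between times $\tau_\la(\eps_0)$ and $\cN(\cB^{(\la)})$ is deterministically at most $2\log_2\big(2\la^{\eps_0/3}/R_*\big)$; a Binomial lower bound on the number of halvings then forces $\cN(\cB^{(\la)})-\tau_\la(\eps_0)\le C\eps_0\log\la$ with probability tending to $1$. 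Without something of this kind, your upper bound does not close, and the subsequent passage $\eps_0\to 0^+$ is unavailable.

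A secondary gap is the $L^1$ convergence. A first moment bound $\E[\cN(\cB^{(\la)})]=\mathrm{O}(\log\la)$ does not give uniform integrability of $\cN(\cB^{(\la)})/\log\la$; one needs a uniform bound in $L^p$ for some $p>1$, or a de la Vall\'ee Poussin criterion. The paper obtains a uniform $L^2$ bound on $\cN(\cB^{(\la)})/\log\la$ from the same geometric device: \eqref{eq:probaconfin} stochastically dominates $\cN(\cB^{(\la)})$ by a sum of $\mathrm{O}(\log\la+\log B_0^{(\la)})$ i.i.d.\ geometric random variables, and the stretched-exponential tail $\P[R^{(\la)}\ge r]\le C''\exp(-r^{3/2}/C'')$ controls $\log B_0^{(\la)}$ in $L^2$. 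Your Mecke-based computation might deliver a first moment estimate, but as written it would not yield the required uniform integrability.
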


\begin{proof} 
The idea is to compare $\cN(\cB^{(\la)})$ to $\tau_\la(\eps_0)$ for $\eps_0$ small enough and use Lemma \ref{lem:taulainf}. Namely, we wish to show that there exists a constant $C>0$ such that, for all $\eps_0>0$,
\begin{equation}\label{eq:cestpossibleproof1}
\lim_{\la\to\infty}\P\Big[\big\{\cN(\cB^{(\la)}) -\tau_\la(\eps_0)> C\eps_0\log\la\big\}\cap E_{\mbox{\tiny{coupl}}}^{(\la)}\Big]=0.
\end{equation}

\noi Indeed, for all $n\ge \tau_\la(\eps_0)$, conditional on $\{\cN(\cB^{(\la)})>n\}=\{V_n^{(\la)} \in \bH\}$, $Z_n^{(\la)}$ is on the right (resp. on the left) of the bisecting line of $(Z_{n-1}^{(\la)},Z_{\textsl c}^{(\la)})$ with probability $\frac12$. When $V_n^{(\la)} \in \bH$, the circular arc centered at $V_n^{(\la)}$ and joining $Z_{n-1}^{(\la)}$, $Z_n^{(\la)}$ and $Z_{\textsf c}^{(\la)}$ is smaller than a half-circle. Consequently,
\begin{equation}\label{eq:probaconfin}
\P\Big[B_n^{(\la)}\le \mfrac1{\sqrt2} B_{n-1}^{(\la)} \,\Big|\, \cN(\cB^{(\la)})>n \Big] > \frac12.
\end{equation}

\noi Let $R_*$ be the distance from $Z_{\textsl c}^{(\la)}$ to its nearest neighbor in the initial Poisson point process $\cP$. On the event $E_{\mbox{\tiny{coupl}}}^{(\la)}$ it holds that $B^{(\la)}_{\tau_\la(\eps_0)}=B^{(\infty)}_{\tau_\la(\eps_0)}< \la^{\frac{\eps_0 -1}{3}}$. Hence, after
\begin{equation*}
D=2\log_2 \big (2 \mfrac{\la^{\frac{\eps_0}{3}}}{R_*}\big)
\end{equation*}
divisions by $\sqrt2$, we would get a distance from $Z_{\textsl c}^{(\la)}$ at most equal to $\mfrac{R_*}{2}$ which happens with probability $0$.

\pass By \eqref{eq:probaconfin}, on the event $\big\{\cN(\cB^{(\la)})-\tau_\la(\eps_0)>C\eps_0 \log\la\big\}\cap E_{\mbox{\tiny{coupl}}}^{(\la)}$, the number of divisions by $\sqrt2$ of the distance from $Z_{\textsl c}^{(\la)}$ between times $\tau_\la(\eps_0)$ and $\cN(\cB^{(\la)})$ stochastically dominates a random variable $N$ following a Binomial distribution $\mbox{Bin}(C\eps_0\log\la,\frac12)$. Moreover, because of the previous remark, this number is upper bounded by $D$. Consequently, we get
\begin{equation*}
\P\Big[\big\{\cN(\cB^{(\la)}) -\tau_\la(\eps_0)> C\eps_0\log\la\big\}\cap E_{\mbox{\tiny{coupl}}}^{(\la)}\Big]\le \P[N\le D].
\end{equation*}
It remains to notice that this probability goes to $0$ as soon as $C\eps_ 0\log\la > 3D$, which happens for $C=\frac{8}{3\log 2}$ and when $\la$ is large enough. This proves \eqref{eq:cestpossibleproof1}.

\pass We now fix $\eta>0$, take $C=\frac{8}{3\log 2}$ as above and choose $\eps_0=\frac{\eta}{2C}$. We get
\begin{equation*}
\P\Big[|\cN(\cB^{(\la)})-\mfrac25\log\la|>\eta \log\la\Big ]\le T_1(\la)+T_2(\la)+T_3(\la)
\end{equation*}
where
\begin{align*}
T_1(\la) & =\P\Big[\big\{|\cN(\cB^{(\la)})-\mfrac25\log\la|>\eta\log\la\,;\,\cN(\cB^{(\la)})-\tau_\la(\eps_0) < C\eps_0 \log\la\big\}\cap E_{\mbox{\tiny{coupl}}}^{(\la)} \Big], \\
T_2(\la) & =\P\Big[\big\{\cN(\cB^{(\la)})-\tau_\la(\eps_0)>C\eps_0 \log\la \big\}\cap E_{\mbox{\tiny{coupl}}}^{(\la)}\Big] \\
\text{and}\hspace{1cm} T_3(\la) & =\P\big[(E_{\mbox{\tiny{coupl}}}^{(\la)})^c\big].\hfill
\end{align*}

\noi By \eqref{eq:cestpossibleproof1} we get $\lim_{\la\to\infty} T_2(\la)=0$ and by Proposition \ref{prop:evenpascouplage}, we obtain $\lim_{\la\to\infty}T_3(\la)=0$.

\noi Finally, we notice that
\begin{equation*}
T_1(\la)\le \P\Big[\big|\tau_\la(\eps_0)-\mfrac25\log\la\big|>\mfrac{\eta}{2}\log\la\Big]
\end{equation*}
and this goes to zero by Lemma \ref{lem:taulainf}. In conclusion, we get the convergence in probability in \eqref{eq:limitlengthbranch}. 

\pass Thanks to \eqref{eq:probaconfin}, the variable $\cN(\cB^{(\la)})$ is stochastically dominated by a sum of $C'(\log\la+\log B_0^{(\la)})$ iid geometric variables with probability parameter $\frac12$. The variable $\log B_0^{(\la)}$ is then upper bounded up to an additive constant by $\log R^{(\la)}$. Since the variable $R^{(\la)}$ satisfies
\begin{equation*}
\P\big[R^{(\la)}\ge r\big] \leq C''\exp\big(-\mfrac{r^{\frac32}}{C''}\big)
\end{equation*}
for $r>0$ large enough, the sequence $\big\{\log R^{(\la)}\big\}_\la$ is bounded in $L^2$. Consequently, $\big\{\frac{\cN(\cB^{(\la)})}{\log\la}\big\}_\lambda$ is bounded in $L^2$ too. The $L^1$-convergence is then a consequence of the convergence in distribution and uniform integrability of $\big\{\frac{\cN(\cB^{(\la)})}{\log \la}\big\}_\la$.
\end{proof}

\bibliographystyle{plain}

\end{document}